\documentclass[11pt,reqno]{amsart}
\usepackage{titlesec}
\titleformat{\section}[block]{\large \bfseries}{\arabic{section}}{1em}{}[]
\titlespacing{\section}{0pt}{*1.5}{*1.1}
\titleformat{\subsection}[block]{\normalsize \bfseries}{\arabic{section}.\arabic{subsection}}{1em}{}[]
\titlespacing{\subsection}{0.5cm}{*4}{*1.5}

\usepackage{geometry}
\geometry{a4paper,left=2.5cm,right=2.5cm,top=3cm,bottom=3cm}
\usepackage{lipsum}
\usepackage{amsfonts}
\usepackage{graphicx}
\usepackage{epstopdf}
\usepackage{algorithm}

\usepackage[rotateright]{rotating}
\usepackage{subfigure,fancybox}
\usepackage{amscd,amstext}
\usepackage{amsmath}
\usepackage{amssymb}
\usepackage{color}
\usepackage{multirow}
\usepackage{mathrsfs}
\usepackage{diagbox}
\usepackage{bbm}
\numberwithin{figure}{section}
\numberwithin{table}{section}

\usepackage{enumitem}

\ifpdf
  \DeclareGraphicsExtensions{.eps,.pdf,.png,.jpg}
\else
  \DeclareGraphicsExtensions{.eps}
\fi

\theoremstyle{plain}
\newtheorem{theorem}{Theorem}[section]

\newtheorem{lemma}{Lemma}[section]

\newtheorem{remark}{Remark}[section]

\usepackage{amsopn}

\numberwithin{equation}{section}
\renewcommand{\theequation}{\thesection.\arabic{equation}}

\usepackage[pagewise]{lineno}

\begin{document}

\title[Analysis and Elimination of Numerical Pressure Dependency]{Analysis and Elimination of Numerical Pressure Dependency in Coupled Stokes-Darcy Problem}

\author{{Jiachuan Zhang}}
\thanks{School of Physical and Mathematical Sciences, Nanjing Tech University, Nanjing, Jiangsu, 211816, P. R. China. (zhangjc@njtech.edu.cn)}

\begin{abstract}
This paper analyses the classical mixed finite element method (FEM) and a pressure-robust variant with divergence-free reconstruction operators for the coupled Stokes-Darcy problem. 
Its main contribution is to provide viscosity-explicit \textit{a priori} error estimates that clearly distinguish the pressure dependence of the two discretizations: the velocity error of the classical scheme depends on both the exact pressure and the viscosity, whereas the pressure-robust method eliminates both entirely. 
Moreover, we derive pressure error estimates and quantify their dependence on the exact solution and model parameters. 
Two-dimensional numerical experiments validate the theoretical findings, including higher-order tests up to polynomial degree three and a lid-driven cavity benchmark with a piecewise linear interface. 
The implementation code is made publicly available to facilitate reproducibility.
\medskip

\noindent{\bf Keywords:}~~ pressure-robustness,Stokes-Darcy problem, reconstruction operator, error estimate

\noindent{\bf \text{Mathematics Subject Classification} :}~~65N15, 65N30, 76D07.

\end{abstract}

\maketitle

\section{Introduction}\label{sec:introduction}
This paper concerned with pressure-robust mixed FEMs of the coupled Stokes-Darcy problem. This problem is involved in energy engineering, battery technology, biomedicine, and so on. There are many numerical method solving this problem \cite{discacciatiMathematicalNumericalModels2002, Layton2002, discacciatiConvergenceAnalysisSubdomain2004,riviereLocallyConservativeCoupling2005,
gaticaConformingMixedFiniteelement2008a,correaUnifiedMixedFormulation2009,
gaticaAnalysisFullymixedFinite2011,vassilevDomainDecompositionCoupled2014,
wenDiscontinuousGalerkinMethod2020,caoExtendedFiniteElement2022}. Especially for reference \cite{Layton2002}, Layton et al. have established a complete theoretical analysis framework for a class of mixed FEMs with respect to two schemes. However, its convergence analysis of the consistency error depends on a Lagrange multiplier. 
This makes it unclear how the velocity error depends on pressure.

For pressure dependence arising from discrete divergence-free constraints, the review article by John et al. \cite{johnDivergenceConstraintMixed2017} systematically examines three methodological frameworks \cite{falkStokesComplexesConstruction2013, guzmanConformingDivergencefreeStokes2014, cockburnNoteDiscontinuousGalerkin2007, wangNewFiniteElement2007, linkeDivergencefreeVelocityReconstruction2012, Linke2016}. A notable strategy, firstly proposed in \cite{linkeDivergencefreeVelocityReconstruction2012},  involves implementing divergence-free reconstruction operators exclusively on the right-hand test functions while preserving the original variational formulation on the left-hand side. Remarkably, this relatively straightforward adaptation has demonstrated the capacity to confer pressure robustness when applied to numerous classical numerical schemes \cite{Linke2016, ledererDivergencefreeReconstructionOperators2017,linkePressurerobustnessQuasioptimalPriori2020}.

The coupled Stokes-Darcy problem involves discrete divergence-free constraints as well as interface normal-continuity conditions. 
Within the framework of divergence-free reconstruction operators, several lemmas and properties developed for the Stokes problem in~\cite{Linke2016} cannot be applied directly to the coupled Stokes-Darcy setting due to the additional interface coupling. 
To the best of our knowledge, Lv et al.~\cite{lvPressurerobustMixedFinite2024} proposed a pressure-robust method based on reconstruction operators, but their results are restricted to lowest-order discretizations. 
Similarly, Jia et al.~\cite{jiaPressurerobustWeakGalerkin2024} developed a pressure-robust weak Galerkin method, which is limited to two-dimensional cases. 
In our previous work~\cite{Zhang2025}, we extended the divergence-free reconstruction operator to the Stokes-Darcy optimal control problem and obtained a pressure-robust discretization in two and three dimensions for pressure finite elements of degree at least one. 
That analysis shows that the velocity error of the pressure-robust scheme becomes independent of the exact pressure.
However, the final velocity bound in~\cite{Zhang2025} still contains an implicit viscosity dependence (through the hidden constants) and may lead to a loss of robustness in the small-viscosity regime.
Moreover, error estimates for the pressure variable are not provided in~\cite{Zhang2025}.

These observations motivate a dedicated pressure-robust analysis for the coupled Stokes-Darcy problem within the reconstruction framework.
The goal of the present work is to derive viscosity-explicit error estimates and to obtain conclusions tailored to this coupled system.
Our main contributions are summarized as follows. First, we establish a viscosity-explicit pressure-robust error analysis for the coupled Stokes-Darcy problem.
In particular, we show that the velocity error of the classical discretization depends on both the exact pressure and the viscosity, whereas the velocity error of the pressure-robust discretization is independent of both.
This conclusion is essentially different from the optimal control setting studied in~\cite{Zhang2025}, where the final pressure-robust velocity bound still exhibits an implicit viscosity sensitivity. Second, we develop a streamlined velocity error analysis for the coupled Stokes-Darcy problem.
Let $\boldsymbol{u}$ and $\boldsymbol{u}_h$ denote the exact velocity and its finite element approximation, respectively.
Our analysis is based on the auxiliary velocity projector $S_h\boldsymbol{u}$ introduced in~\cite{Zhang2025}.
The estimate for $\boldsymbol{u}_h-S_h\boldsymbol{u}$ obtained here admits a sharper upper bound that depends only on the best-approximation error of the pressure, whereas the corresponding bound in~\cite{Zhang2025} also involves velocity-related approximation terms. Finally, we further derive pressure error estimates and validate the theoretical findings by numerical experiments, including higher-order tests (up to $k=3$) and a lid-driven cavity benchmark with a piecewise linear interface. The implementation code is made publicly available to support reproducibility.

The remainder of this paper is organized as follows. Section~2 presents the Stokes-Darcy coupled system and derives its variational formulation. Section~3 constructs the auxiliary projector and analyzes the classical mixed FEM discretization, yielding viscosity-explicit estimates that reveal the pressure-dependent velocity error mechanisms.
Section~4 proposes a pressure-robust discretization through divergence-free reconstruction operators, eliminating pressure and viscosity dependence in velocity error estimates. Section~5 validates theoretical predictions via numerical benchmarks comparing classical and pressure-robust methods under varying pressure and viscosity conditions. Finally, Section~\ref{sec:conclusion} summarizes the key findings of this study and outlines potential extensions for future researches.

\section{Problem statements and variational formulation}
Let $\Omega^i$ ($i=s,d$) denote bounded, simply connected polygonal or polyhedral in $\mathbb{R}^N$ ($N=2,3$) with interface $\Gamma=\overline{\Omega^s}\cap \overline{\Omega^d}$, and boundaries $\Gamma^i=\partial\Omega^i\backslash\Gamma$. Unit normal vector $\boldsymbol{n}^i$ are oriented outward from $\partial\Omega^i$, with $\boldsymbol{n}^s=-\boldsymbol{n}^d$ on $\Gamma$. Figure~\ref{fig:geometry} shows a schematic diagram of the geometric configuration.
This paper is concerned with pressure-robust FEMs for the Stokes-Darcy problem described by the following systems. 
The free fluid domain $\Omega^s$ and porous media domain $\Omega^d$ are governed respectively by:
\begin{align}\label{eqn:state Stokes}
&-2\mu\nabla\cdot D(\boldsymbol{u}^s)+\nabla p^s=\boldsymbol{f}^s \mbox{~in~} \Omega^s,\quad
\nabla\cdot \boldsymbol{u}^s=g^s \mbox{~in~} \Omega^s,\quad
\boldsymbol{u}^s=0 \mbox{~on~} \Gamma^s,
\end{align}
and
\begin{align}\label{eqn:state Darcy}
&\mu K^{-1}\boldsymbol{u}^d+\nabla p^d=\boldsymbol{f}^d \mbox{~in~} \Omega^d,\quad
\nabla\cdot \boldsymbol{u}^d=g^d \mbox{~in~} \Omega^d,\quad
\boldsymbol{u}^d\cdot \boldsymbol{n}^d=0 \mbox{~on~} \Gamma^d,
\end{align}
with interface conditions:
\begin{align}\label{eqn:state boundary 1}
&\boldsymbol{u}^s\cdot\boldsymbol{n}^s+\boldsymbol{u}^d\cdot\boldsymbol{n}^d=0,\\
\label{eqn:state boundary 2}
&p^s-2\mu D(\boldsymbol{u}^s)\boldsymbol{n}^s\cdot\boldsymbol{n}^s=p^d,\\
\label{eqn:state boundary 3}
&\boldsymbol{u}^s\cdot\boldsymbol{\tau}_j=-2\frac{\sqrt{\kappa_j}}{\alpha_1}D(\boldsymbol{u}^s)
\boldsymbol{n}^s\cdot\boldsymbol{\tau}_j,\quad j=1,\cdots,N-1.
\end{align}

\begin{figure}[htbp]
  \centering
  \includegraphics[width=0.6\textwidth]{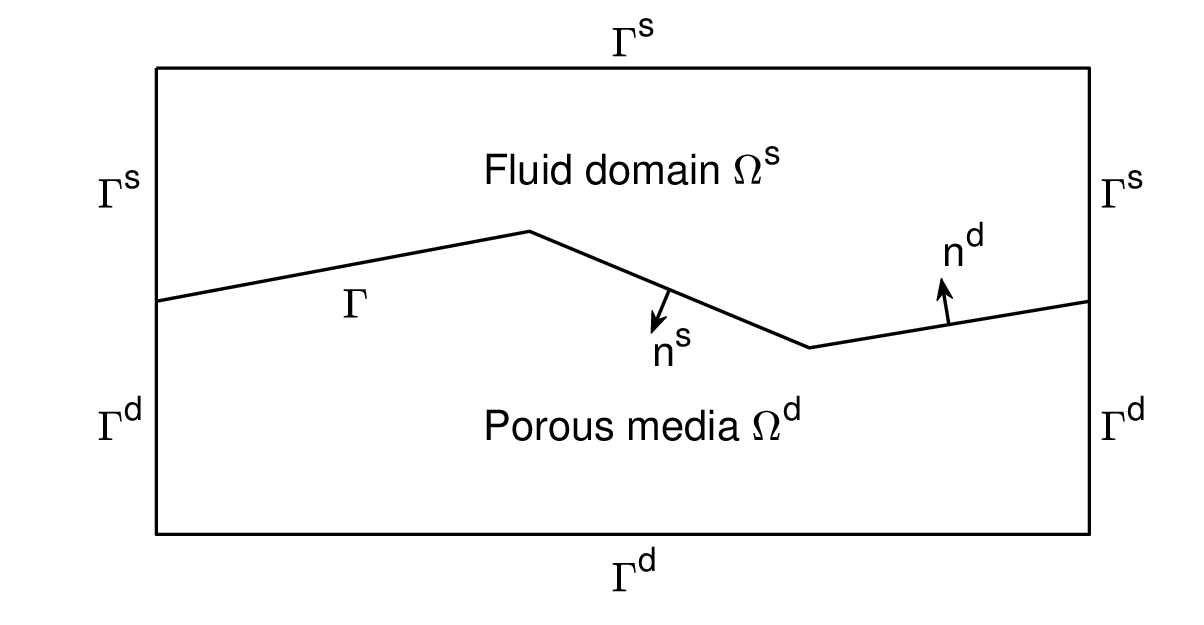}
\caption{The fluid domain and porous media domain}
\label{fig:geometry}
\end{figure}

The solution pairs $(\boldsymbol{u}^s,p^s)$ and $(\boldsymbol{u}^d,p^d)$ satisfy Stokes and Darcy equations in their respective domains, where
\begin{enumerate}[itemindent=-1.5em]
\item[$\bullet$] $\boldsymbol{f}^i$ and $g^i$ represent body forces and divergence constraints satisfying $\int_{\Omega^s}g^s+\int_{\Omega^d}g^d=0$;
\item[$\bullet$] $\mu$ denotes fluid viscosity;
\item[$\bullet$] $D(\boldsymbol{u}^s)=(\nabla\boldsymbol{u}^s+(\nabla\boldsymbol{u}^s)^T)/2$ is the strain tensor;
\item[$\bullet$] $K$ is a symmetric positive definite permeability tensor satisfying
\begin{align*}
K_L\xi^T\xi\leq \xi^TK\xi\leq K_U\xi^T\xi, \quad \forall \xi\in\mathbb{R}^N,
\end{align*} 
for some constants $0< K_L\leq K_U<\infty$;
\item[$\bullet$] $\kappa_j=\boldsymbol{\tau}_j\cdot K\cdot\boldsymbol{\tau}_j$ with $\boldsymbol{\tau}_j$ being tangent vectors to $\Gamma$;
\item[$\bullet$] $\alpha_1$ is an empirical Beavers-Joseph-Saffman (BJS) parameter.
\end{enumerate}
The interface conditions enforce: (\ref{eqn:state boundary 1}) normal continuity, (\ref{eqn:state boundary 2}) balance of normal forces, and (\ref{eqn:state boundary 3}) the BJS slip condition.

Let $H^m(E)$ denote standard Sobolev spaces with norm $\|\cdot\|_{m,E}$ and seminorm $|\cdot|_{m,E}$ for subdomain $E\subset\mathbb{R}^N$. 
In particular, $H^0(E)=L^2(E)$ and the subscript $m$ will be dropped from the norms. The norms are applicable to both vector valued functions in space $[H^m(E)]^N$ and tensor valued functions in space $[H^m(E)]^{N\times N}$ as well.
Let $(\cdot,\cdot)_E$ denote the $L^2(E),[L^2(E)]^N$, and $[L^2(E)]^{N\times N}$ inner product for scalar, vector, and tensor valued functions, respectively. 
In the above definition, if $E=\Omega^i, i=s,d$, we will abbreviate as $\|\cdot\|_{k,i}$,
$|\cdot|_{k,i}$, and $(\cdot,\cdot)_i$. We also use $\langle\cdot,\cdot\rangle_{\Gamma}$ and $\|\cdot\|_{\Gamma}$ to denote the $L^2(\Gamma)$ inner product and norm, respectively, for scalar and vector valued functions.

We define:
\begin{align*}
L_0^2(E)&=\{q\in L^2(E)~|~\int_E q=0\},\\
H(div;E)&=\{\boldsymbol{v}\in [L^2(E)]^N~|~\nabla\cdot\boldsymbol{v}\in L^2(E)\}.
\end{align*}
The velocity spaces are:
\begin{align*}
V^s &= \{\boldsymbol{v}^s \in [H^1(\Omega^s)]^N \ | \ \boldsymbol{v}^s = \boldsymbol{0} \ \text{on } \Gamma^s\}, \\
V^d &= \{\boldsymbol{v}^d \in H(div;\Omega^d) \ | \ \boldsymbol{v}^d\cdot\boldsymbol{n}^d = 0 \ \text{on } \Gamma^d\},
\end{align*}
equipped with the norm
\begin{align*}
\|\boldsymbol{v}\|_X=(|\boldsymbol{v}^s|_{1,s}^2+\|\boldsymbol{v}^d\|_d^2
+\|\nabla\cdot\boldsymbol{v}^d\|_d^2)^{1/2}.
\end{align*}
The composite velocity space incorporating interface condition (\ref{eqn:state boundary 1}) is:
\begin{align*}
V=\{\boldsymbol{v}=(\boldsymbol{v}^s,\boldsymbol{v}^d)\in V^s\times V^d~|~
\langle\boldsymbol{v}^s\cdot\boldsymbol{n}^s+\boldsymbol{v}^d\cdot\boldsymbol{n}^d,\lambda\rangle_{\Gamma}=0, \forall\lambda\in\Lambda\},
\end{align*}
where $\Lambda=H_{00}^{1/2}(\Gamma)\subset L^2(\Gamma)$ follows the definition in \cite{Layton2002}. The pressure space is $Q=L_0^2(\Omega)$ for $\Omega=\Omega^s\cup\Omega^d$.

The weak formulation of (\ref{eqn:state Stokes})-(\ref{eqn:state boundary 3}) reads: Find
$(\boldsymbol{u},p)\in V\times Q$ satisfying
\begin{align}\label{eqn:state weak 1}
a(\boldsymbol{u},\boldsymbol{v})+b(\boldsymbol{v},p)&=(\boldsymbol{f},
\boldsymbol{v}),
\quad \forall\boldsymbol{v}\in V,\\
\label{eqn:state weak 2}
b(\boldsymbol{u},q)&=(g,q), \quad \forall q\in Q,
\end{align}
with bilinear forms
\begin{align*}
&a(\boldsymbol{u},\boldsymbol{v})\\
=&a_s(\boldsymbol{u}^s,\boldsymbol{v}^s)
+a_d(\boldsymbol{u}^d,\boldsymbol{v}^d)
+a_I(\boldsymbol{u}^s,\boldsymbol{v}^s)\\
=&2\mu(D(\boldsymbol{u}^s),D(\boldsymbol{v}^s))_s+\mu(K^{-1}\boldsymbol{u}^d,\boldsymbol{v}^d)_d
+\sum_{j=1}^{N-1}\frac{\alpha_1\mu}{\sqrt{\kappa_j}}\langle\boldsymbol{u}^s\cdot\boldsymbol{\tau}_j,
\boldsymbol{v}^s\cdot\boldsymbol{\tau}_j\rangle_{\Gamma},\quad \forall \boldsymbol{u},\boldsymbol{v}\in V^s\times V^d,\\
&b(\boldsymbol{v},q)=-(\nabla\cdot\boldsymbol{v}^s,q)_s-(\nabla\cdot\boldsymbol{v}^d,q)_d,
\quad \forall \boldsymbol{v}\in V^s\times V^d, q\in Q.
\end{align*}

The normal continuity condition (\ref{eqn:state boundary 1}) is enforced through the space $V$, while conditions (\ref{eqn:state boundary 2}) and (\ref{eqn:state boundary 3}) emerge naturally in the weak formulation. The well-posedness of this system is established in Theorem~3.1 of \cite{Layton2002}.

\section{Classical FEM discretization}

Let $\mathcal{T}_h$ be a conforming and shape-regular triangulation, which consists of simplices and matches at $\Gamma$ \cite{Layton2002}. The set of edges or faces in $\mathcal{T}_h$ is denoted by $\mathcal{E}_h$. Moreover, we define three useful sub-triangulation as following
\begin{align*}
\mathcal{E}_h(\Gamma)=\{e\in \mathcal{E}_h~|~ e\subset\Gamma\},
\quad \mathcal{T}_h(\Omega^i)=\{T\in \mathcal{T}_h~|~ T\subset\Omega^i\}, \quad i=s,d.
\end{align*}
For $T\in\mathcal{T}_h$, let $h_T$ denote the diameter of the polygon or polyhedra $T$ and $h=\max_{T\in\mathcal{T}_h} h_T$. For the discretization of the fluid's variables in $\Omega^s$, we choose finite element space $V_h^s\subset V^s$, $Q_h^s\subset L^2(\Omega^s)$ introduced in Section 4.1.1 in \cite{Linke2016}, which are LBB-stable
This velocity space is achieved by enriching the space $P_k(T),k\geq 2$, of continuous, piece-wise polynomial functions of degree less than or equal to $k$ with suitable bubble functions.
The pressure space in $\Omega^s$ contains the functions with degree $k-1$ in $\Omega^s$, i.e. $Q_h^s=\{q_h\in L^2(\Omega^s)~|~ q_{h|T}\in P_{k-1}(T), T\in \mathcal{T}_h(\Omega^s)\}$. 

For any element $T\in\mathcal{T}_h$, let $\{z_0,\cdots,z_d\}$ denote its vertices. We define barycentric coordinates $\lambda_j\in P_1(T), 0\leq j\leq N$ uniquely such that $\lambda_j(z_{l})=\delta_{jl}$, where $\delta_{jl}$ denotes the Kronecker delta function. Let $b_T$ be the product of barycentric coordinates related to the element $T$, and let $\tilde{P}_l(T)$ denote the space of homogeneous polynomials of degree $l$. 
Indeed, we choose
\begin{align*}
V_h^s=\{\boldsymbol{v}_h^s\in V^s~|~\boldsymbol{v}_{h|T}^s\in \boldsymbol{P}_k^+(T), T\in\mathcal{T}_h(\Omega^s)\}\quad\mbox{with}\quad
\boldsymbol{P}_k^+(T)=[P_k(T)\oplus span\{b_T\tilde{P}_{k-2}(T)\}]^2,
\end{align*}
in two-dimensional case and 
\begin{align*}
V_h^s=\{\boldsymbol{v}_h^s\in V^s~|~\boldsymbol{v}_{h|T}^s\in \boldsymbol{P}_k^+(T)], T\in\mathcal{T}_h(\Omega^s)\},
\end{align*}
for three-dimensional case with
\begin{align*}
\boldsymbol{P}_k^+(T)&=[P_k(T)\oplus span\{b_T(\tilde{P}_{k-2}(T)\oplus\tilde{P}_{k-3}(T))\}]^3,
\quad k\geq 3,\\
\boldsymbol{P}_k^+(T)&=[P_k(T)\oplus b_T(\tilde{P}_{k-2}(T))]^3\oplus span\{\boldsymbol{p}_1,\boldsymbol{p}_2,\boldsymbol{p}_3,\boldsymbol{p}_4\},\quad k=2
\end{align*}
with the face bubble functions 
$\boldsymbol{p}_j=\boldsymbol{n}_j\prod_{l=1}^N\lambda_{j_l}$,
where $j_1,\cdots,j_N$ are different indexes from the set $\{1,\cdots,N+1\}/\{j\}$ and $\boldsymbol{n}_j$ is the outer normal of the face opposite the vertex $\lambda_j=1$.

The porous medium problem in $\Omega^d$, 
we choose mixed finite element spaces to be the $RT_{k-1}$ spaces \cite{Raviart1977} for the normal velocity vanishing on boundary $\Gamma^d$, and the pressure space similar to fluid domain, denoted by $V_h^d$  and $Q_h^d$. It is known for these choices that $V_h^d\subset V^d, Q_h^d\in L^2(\Omega^d)$, and
$\nabla\cdot V_h^d=Q_h^d$.

On the interface $\Gamma$, define another finite element space, which is the normal trace of $V_h^d$ on $\Gamma_h$, as following
\begin{align*}
\Lambda_h=\{\lambda_h\in L^2(\Gamma)~|~\mu_{h|e} \in [P_{k}(e)]^N, \forall e\in\mathcal{E}_h(\Gamma)\}.
\end{align*}

With these spaces $V_h^s, V_h^d$, and $\Lambda_h$, define
\begin{align}\label{eqn:definition of Vh}
V_h=\{\boldsymbol{v}_h=(\boldsymbol{v}_h^s,\boldsymbol{v}_h^d)\in V_h^s\times V_h^d~|~
\langle\boldsymbol{v}_h^s\cdot\boldsymbol{n}^s
+\boldsymbol{v}_h^d\cdot\boldsymbol{n}^d,\lambda_h\rangle_{\Gamma}=0,\forall \lambda_h\in\Lambda_h\}.
\end{align}
Note that, since function $\lambda_h\in\Lambda_h$ does not in general vanish on $\partial\Gamma$ ($\Lambda_h\not\subset\Lambda$), the space $V_h$ is nonconforming ($V_h\not\subset V$).
With spaces $Q_h^s$ and $Q_h^d$, define $Q_h=\{q\in Q_h^s\times Q_h^d~|~(q,1)_{\Omega}=0\}$, which is vanishing integral in $\Omega$ and satisfy $Q_h\subset Q$.

The classical variational discretization of (\ref{eqn:state weak 1})-(\ref{eqn:state weak 2}) solves the following discrete problem: seek $(\boldsymbol{u}_h,p_h)\in V_h\times Q_h$ such that
\begin{align}
\label{eqn:PDE problem clssical}
\begin{aligned}
a(\boldsymbol{u}_h,\boldsymbol{\psi}_h)+b(\boldsymbol{\psi}_h,p_h)
&=(\boldsymbol{f},\boldsymbol{\psi}_h),
 &&\forall \boldsymbol{\psi}_h\in V_h,\\
b(\boldsymbol{u}_h,\phi_h)&=(g,\phi_h), &&\forall \phi_h\in Q_h.
\end{aligned} 
\end{align}

The inf-sup condition of the pair $V_h^s\times Q_h^s$ can be get from Lemma~4.1 in \cite{Layton2002} combining with the following Lemma~\ref{lem:projection operatro Upslion_1}. Then, the inf-sup conditions for the coupled problem holds form Lemma~4.3 in \cite{Layton2002} which determines the solvability of the discrete problem. Although the pressure space $Q_h^s$ is not restricted to have zero mean over $\Omega^s$, i.e., $Q_h^s\subset L^2(\Omega^s)$, not $L_0^2(\Omega^s)$, the proof of Lemma~\ref{lem:projection operatro Upslion_1}
is similar to the usual one verified in the literature (VI.4 of \cite{Brezzi1991}). 
To maintain the integrity of the text, we still provide the proof in Appendix.

\begin{remark}
In this work, we use $\mathbbm{a}\lesssim \mathbbm{b}$ when there exists a constant $c$ independent of $\mathbbm{a},\mathbbm{b},h,\mu$ such that $\mathbbm{a}\leq c\mathbbm{b}$.
\end{remark} 

\begin{lemma}\label{lem:projection operatro Upslion_1}
There exists an operator $\Upsilon_h^s: V^s\rightarrow V_1^s\subset V_h^s$ satisfying, for any $T\in \mathcal{T}_h(\Omega^s)$ and all $\boldsymbol{v}^s\in V^s$,
\begin{align}
\int_T\nabla\cdot(\Upsilon_h^s\boldsymbol{v}^s-\boldsymbol{v}^s)=0, \quad \mbox{~and~}\quad 
\|\Upsilon_h^s\boldsymbol{v}^s\|_s\lesssim \|\boldsymbol{v}^s\|_{1,s},
\end{align}
where $V_1^s=\{\boldsymbol{v}_h^s\in V^s~|~\boldsymbol{v}_{h|T}^s\in \boldsymbol{P}_1^+(T), T\in\mathcal{T}_h(\Omega^s)\}$ with
$\boldsymbol{P}_1^+(T)=[P_1(T)]^N\oplus span\{\boldsymbol{p}_1,\cdots,\boldsymbol{p}_{N+1}\}$. 
\end{lemma}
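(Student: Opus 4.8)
The plan is to reproduce the classical two-part Fortin construction (as in VI.4 of \cite{Brezzi1991}): a stable quasi-interpolant onto the piecewise-linear part, followed by a face-bubble correction that repairs the normal fluxes across element faces. The guiding observation is that, by the divergence theorem, the element condition $\int_T\nabla\cdot(\Upsilon_h^s\boldsymbol{v}^s-\boldsymbol{v}^s)=0$ is implied by the stronger face-wise matching
\[
\int_F(\Upsilon_h^s\boldsymbol{v}^s-\boldsymbol{v}^s)\cdot\boldsymbol{n}_F=0\qquad\text{for every face }F,
\]
and that each vector face bubble $\boldsymbol{p}_j$ is supported on a single face, with a nonvanishing normal flux there and zero flux through every other face. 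This localizes the correction to one scalar coefficient per face and decouples the face conditions.

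Concretely, I would first fix a global orientation $\boldsymbol{n}_F$ for each face and assemble the continuous global bubble $\boldsymbol{b}_F=\boldsymbol{n}_F\phi_F$, where $\phi_F$ is the product of the $N$ barycentric coordinates of the vertices of $F$; on each element $T$ adjacent to $F$ this equals $\pm\boldsymbol{p}_j$, so $\boldsymbol{b}_F\in V_1^s$, it matches on $F$ from both sides and vanishes on the remaining faces. I would then take a Scott--Zhang (or Clément) interpolant $\Pi_h\boldsymbol{v}^s\in[P_1]^N\cap V^s$ preserving the homogeneous condition on $\Gamma^s$, with the standard $L^2$- and $H^1$-stability and local approximation estimates, and set
\[
\Upsilon_h^s\boldsymbol{v}^s=\Pi_h\boldsymbol{v}^s+\sum_F\beta_F\,\boldsymbol{b}_F,\qquad
\beta_F=\frac{\int_F(\boldsymbol{v}^s-\Pi_h\boldsymbol{v}^s)\cdot\boldsymbol{n}_F}{\int_F\phi_F},
\]
the sum running over faces not contained in $\Gamma^s$ (on $\Gamma^s$ both $\boldsymbol{v}^s$ and $\Pi_h\boldsymbol{v}^s$ vanish, so $\beta_F=0$ and the boundary condition is retained). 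Because $\int_F\phi_F>0$ and $\boldsymbol{b}_{F'}$ has zero flux through $F\neq F'$, testing the definition against $\boldsymbol{n}_F$ on each face yields $\int_F\Upsilon_h^s\boldsymbol{v}^s\cdot\boldsymbol{n}_F=\int_F\boldsymbol{v}^s\cdot\boldsymbol{n}_F$, which is the first assertion.

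For the stability bound I would estimate each coefficient by a trace inequality together with the approximation property of $\Pi_h$. Scaling gives $\int_F\phi_F\sim h_T^{N-1}$ and $\|\boldsymbol{b}_F\|_{0,T}\sim h_T^{N/2}$, while the scaled trace estimate $\|\boldsymbol{w}\|_{0,F}^2\lesssim h_T^{-1}\|\boldsymbol{w}\|_{0,T}^2+h_T|\boldsymbol{w}|_{1,T}^2$ applied to $\boldsymbol{w}=\boldsymbol{v}^s-\Pi_h\boldsymbol{v}^s$ and the bounds $\|\boldsymbol{v}^s-\Pi_h\boldsymbol{v}^s\|_{0,T}\lesssim h_T|\boldsymbol{v}^s|_{1,\omega_T}$, $|\boldsymbol{v}^s-\Pi_h\boldsymbol{v}^s|_{1,T}\lesssim|\boldsymbol{v}^s|_{1,\omega_T}$ give
\[
\Big|\int_F(\boldsymbol{v}^s-\Pi_h\boldsymbol{v}^s)\cdot\boldsymbol{n}_F\Big|\lesssim|F|^{1/2}\,\|\boldsymbol{v}^s-\Pi_h\boldsymbol{v}^s\|_{0,F}\lesssim h_T^{N/2}\,|\boldsymbol{v}^s|_{1,\omega_T}.
\]
Hence $|\beta_F|\lesssim h_T^{1-N/2}|\boldsymbol{v}^s|_{1,\omega_T}$ and $\|\beta_F\boldsymbol{b}_F\|_{0,T}\lesssim h_T\,|\boldsymbol{v}^s|_{1,\omega_T}$; summing over elements with finitely overlapping patches (shape-regularity) and adding the $L^2$-stability of $\Pi_h$ yields $\|\Upsilon_h^s\boldsymbol{v}^s\|_s\lesssim\|\boldsymbol{v}^s\|_{1,s}$ with constant independent of $h$ and $\mu$. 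I expect the main difficulty to be bookkeeping rather than conceptual: assembling the face bubbles with a single global orientation so that $\Upsilon_h^s\boldsymbol{v}^s$ remains conforming in $V^s$ (continuous across interior faces, vanishing on $\Gamma^s$, yet unconstrained on the interface $\Gamma$), and tracking the shape-regularity constants through the trace and scaling steps. I would finally remark that, since only the element-wise means of the divergence are matched, the argument is insensitive to $Q_h^s$ lacking the zero-mean constraint, which is precisely the point flagged before the statement.
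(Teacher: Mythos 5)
Your proposal is correct and is essentially the paper's own proof: the paper defines $\Upsilon_h^s\boldsymbol{v}^s=\Upsilon_2^s(\boldsymbol{v}^s-\Upsilon_1^s\boldsymbol{v}^s)+\Upsilon_1^s\boldsymbol{v}^s$, where $\Upsilon_1^s$ is a stable piecewise-linear quasi-interpolant and $\Upsilon_2^s$ is precisely your face-bubble flux correction with coefficients $\alpha_j=\bigl(\int_{f_j}\boldsymbol{w}\cdot\boldsymbol{n}_j\bigr)/\bigl(\int_{f_j}\prod_{l=1}^N\lambda_{j_l}\bigr)$ applied to the residual $\boldsymbol{w}=\boldsymbol{v}^s-\Upsilon_1^s\boldsymbol{v}^s$, with stability obtained from the same scaling and scaled-trace estimates you use. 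The only cosmetic differences are that you assemble the bubbles globally with a fixed face orientation and invoke Scott--Zhang by name, and your coefficient bounds in fact also deliver the $H^1$-stability that the paper's appendix establishes.
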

\begin{proof}
The proof can be found in Appendix.
\end{proof}

Define
\begin{align*}
V(g)&=\{\boldsymbol{\psi}\in V~|~b(\boldsymbol{\psi},\phi)=(g,\phi),~\forall \phi\in Q\},\\
V_h(g)&=\{\boldsymbol{\psi}_h\in V_h~|~b(\boldsymbol{\psi}_h,\phi_h)=(g,\phi_h),~\forall \phi_h\in Q_h\}.
\end{align*}
for the inf-sup stable pair $V\times Q$ and $V_h\times Q_h$ with $V_h(g)\not\subset V(g)$ 
(since the divergence-free and normal continuity across $\Gamma$ hold discretely).

From Lemma~4.4 in \cite{Layton2002} or Proposition~5.5.6 in \cite{Boffi2013}, a proved result can be shown as
\begin{align}\label{eqn:u-uh pre-estimate}
&\|\boldsymbol{u}-\boldsymbol{u}_h\|_X+\|p-p_h\| \lesssim \inf_{\boldsymbol{\psi}_h\in V_h} \|\boldsymbol{u}-\boldsymbol{\psi}_h\|_X+\frac{1}{\mu} \inf_{\phi_h\in Q_h}\|p-\phi_h\|+\frac{1}{\mu}\mathcal{H},
\end{align} 
where the third term in the right hand is the consistency error defined as
\begin{align*}
\mathcal{H}=\sup_{\boldsymbol{\psi}_h\in V_h}\frac{|a(\boldsymbol{u},\boldsymbol{\psi}_h)+b(\boldsymbol{\psi_h},p)
-(\boldsymbol{f},\boldsymbol{\psi_h})|}{\|\boldsymbol{\psi}_h\|_X}.
\end{align*}
This estimate is conducted by coupling velocity and pressure, and the consistency error $\mathcal{H}$ in \cite{Layton2002} depends on a Lagrange multiplier but does not exhibit a correlation with pressure. These make it unclear how the velocity error depends on pressure.

The analysis in this paper is based on a pressure-related functional on $V_h$, which consists of relaxed divergence-free constraint and relaxed normal continuity across $\Gamma$. The functional can defined by
\begin{align}
\label{eqn:consistency error functional}
\vartheta_p(\boldsymbol{\psi}_h)=b(\boldsymbol{\psi}_h,p)
-\langle\boldsymbol{\psi}_h^s\cdot\boldsymbol{n}^s
+\boldsymbol{\psi}_h^d\cdot\boldsymbol{n}^d,p^d\rangle_{\Gamma}
, \quad \forall \boldsymbol{\psi}_h\in V_h,
\end{align}
related to $p^d\in H^1(\Omega^d)$.
For the function exactly satisfying divergence constraint and interface normal continuity, i.e. $\boldsymbol{\psi}_h\in V(0)$, or $p\in Q_h$ and $p^d\in\Lambda_h$, this functional is always zero. For the other cases, the functional $\vartheta_p(\boldsymbol{\psi}_h)\neq0$, which leads to the discrete format being pressure-dependent. The following lemma is a direct corollary of the Lemma~3.1 in \cite{Zhang2025}.
\begin{lemma}
\label{lem:consistency error}
When $p^d\in H^1(\Omega^d)$, it can be obtained that
\begin{align*}
\vartheta_{p}(\boldsymbol{\psi}_h)
\lesssim &
\left(\inf_{\phi_h\in Q_h^s}\|p^s-\phi_h\|_s^2+
h\inf_{\phi_h\in\Lambda_h}\|p^d-\phi_h\|_{\Gamma}^2\right)^{1/2}\|\boldsymbol{\psi}_h\|_X,\quad \forall \boldsymbol{\psi}_h\in V_h(0).
\end{align*}
\end{lemma}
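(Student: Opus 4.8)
The plan is to reduce $\vartheta_p(\boldsymbol{\psi}_h)$ to a sum of best-approximation errors by exploiting the two defining constraints of $V_h(0)\subset V_h$, and then to extract the factor $h^{1/2}$ on the interface contribution through a scaling argument. First I would observe that for $\boldsymbol{\psi}_h\in V_h(0)$ the form $b(\boldsymbol{\psi}_h,\cdot)$ annihilates not only $Q_h$ but in fact every $\tilde{\phi}_h=(\phi_h^s,\phi_h^d)\in Q_h^s\times Q_h^d$: writing $\tilde{\phi}_h$ as its $Q_h$-part plus a constant $c$, one has $b(\boldsymbol{\psi}_h,c)=-c\int_\Gamma(\boldsymbol{\psi}_h^s\cdot\boldsymbol{n}^s+\boldsymbol{\psi}_h^d\cdot\boldsymbol{n}^d)=0$, since $\mathbf{1}\in\Lambda_h$ and $\boldsymbol{\psi}_h\in V_h$. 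Likewise $\langle\boldsymbol{\psi}_h^s\cdot\boldsymbol{n}^s+\boldsymbol{\psi}_h^d\cdot\boldsymbol{n}^d,\lambda_h\rangle_\Gamma=0$ for all $\lambda_h\in\Lambda_h$. Subtracting these from the definition \eqref{eqn:consistency error functional} gives, for arbitrary $\phi_h^s\in Q_h^s$, $\phi_h^d\in Q_h^d$, $\lambda_h\in\Lambda_h$,
$$\vartheta_p(\boldsymbol{\psi}_h)=-(\nabla\cdot\boldsymbol{\psi}_h^s,p^s-\phi_h^s)_s-(\nabla\cdot\boldsymbol{\psi}_h^d,p^d-\phi_h^d)_d-\langle\boldsymbol{\psi}_h^s\cdot\boldsymbol{n}^s+\boldsymbol{\psi}_h^d\cdot\boldsymbol{n}^d,\,p^d-\lambda_h\rangle_\Gamma.$$

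For the two volume terms I would choose $\phi_h^d$ to be the $L^2(\Omega^d)$-projection of $p^d$ onto $Q_h^d$; since $\nabla\cdot V_h^d=Q_h^d$ we have $\nabla\cdot\boldsymbol{\psi}_h^d\in Q_h^d$, so the Darcy term vanishes identically. The Stokes term is treated by Cauchy-Schwarz together with $\|\nabla\cdot\boldsymbol{\psi}_h^s\|_s\lesssim|\boldsymbol{\psi}_h^s|_{1,s}\le\|\boldsymbol{\psi}_h\|_X$, and taking $\phi_h^s$ as the best $L^2(\Omega^s)$-approximation yields the first term $\inf_{\phi_h\in Q_h^s}\|p^s-\phi_h\|_s\,\|\boldsymbol{\psi}_h\|_X$, with no power of $h$.

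The crux is the interface term, where the factor $h^{1/2}$ must appear. Denote the discrete normal-flux jump by $j:=\boldsymbol{\psi}_h^s\cdot\boldsymbol{n}^s+\boldsymbol{\psi}_h^d\cdot\boldsymbol{n}^d$, and choose $\lambda_h$ as the $L^2(\Gamma)$-projection $\Pi_\Gamma p^d$ onto $\Lambda_h$, so that the term becomes $\langle j,\,p^d-\Pi_\Gamma p^d\rangle_\Gamma$. The key point is that $j$ is $L^2(\Gamma)$-orthogonal to $\Lambda_h$ (definition of $V_h$), while $\boldsymbol{\psi}_h^d\cdot\boldsymbol{n}^d\in\Lambda_h$ as a normal trace of $V_h^d$; hence $j=(I-\Pi_\Gamma)(\boldsymbol{\psi}_h^s\cdot\boldsymbol{n}^s)$, the part of the Stokes normal trace not reproduced by $\Lambda_h$. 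Because $\Lambda_h$ contains the constants, on each $e\in\mathcal{E}_h(\Gamma)$ the quantity $\|j\|_e$ is bounded by the $L^2(e)$-distance of $\boldsymbol{\psi}_h^s\cdot\boldsymbol{n}^s$ to the constants, a trace-oscillation that vanishes whenever $\boldsymbol{\psi}_h^s$ is constant on the adjacent element $T_e$. I would therefore map $T_e$ to a reference simplex and invoke a Bramble–Hilbert / finite-dimensional norm-equivalence argument: this oscillation functional annihilates constants on the finite-dimensional space $\boldsymbol{P}_k^+(\hat{T})$, hence is dominated there by $|\cdot|_{1,\hat{T}}$. Scaling back under shape-regularity — the surface measure scales as $h^{N-1}$ while the $H^1$-seminorm scales as $h^{N-2}$, so their ratio contributes exactly $h^{1/2}$ — yields $\|j\|_e\lesssim h^{1/2}|\boldsymbol{\psi}_h^s|_{1,T_e}$; summing over $e\in\mathcal{E}_h(\Gamma)$ gives $\|j\|_\Gamma\lesssim h^{1/2}\|\boldsymbol{\psi}_h\|_X$. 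A Cauchy-Schwarz step and the best-approximation choice of $\lambda_h$ then bound the interface term by $h^{1/2}\,\inf_{\phi_h\in\Lambda_h}\|p^d-\phi_h\|_\Gamma\,\|\boldsymbol{\psi}_h\|_X$.

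Combining the three estimates and using $a+b\le\sqrt{2}\,(a^2+b^2)^{1/2}$ gives the asserted bound. I expect the only genuinely delicate point to be the interface estimate above: the gain of $h^{1/2}$ relies on recognizing that the constrained jump is a pure trace-oscillation term, so that it is controlled by the velocity $H^1$-seminorm rather than its full $L^2$ norm, the half-power then being a consequence of the mismatch between the edge and element scalings. The volume terms and the final algebraic recombination are routine.
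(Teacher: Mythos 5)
Your proof is correct and follows essentially the same route the paper relies on: the paper does not prove Lemma~\ref{lem:consistency error} in-line but declares it a direct corollary of Lemma~3.1 in \cite{Zhang2025}, and your argument --- extending the discrete orthogonalities from $Q_h$ to all of $Q_h^s\times Q_h^d$ by splitting off the constant mode and using $\mathbf{1}\in\Lambda_h$, annihilating the Darcy volume term via $\nabla\cdot V_h^d=Q_h^d$, identifying the constrained flux jump as $(I-\Pi_\Gamma)(\boldsymbol{\psi}_h^s\cdot\boldsymbol{n}^s)$, and gaining the factor $h^{1/2}$ through a scaled trace/Bramble--Hilbert estimate giving $\|j\|_e\lesssim h_T^{1/2}|\boldsymbol{\psi}_h^s|_{1,T_e}$ --- is precisely the standard argument underlying that cited result. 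Your two delicate steps are both sound: the mean-zero obstruction is correctly neutralized (since $b(\boldsymbol{\psi}_h,c)=-c\langle\boldsymbol{\psi}_h^s\cdot\boldsymbol{n}^s+\boldsymbol{\psi}_h^d\cdot\boldsymbol{n}^d,1\rangle_\Gamma=0$ by the divergence theorem and the $V_h$ constraint), and the edge-local use of $\Pi_\Gamma$ is legitimate because $\Lambda_h$ is a broken space, so the $L^2(\Gamma)$ projection localizes and is dominated edgewise by the distance to constants.
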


Next, we estimate the distance of $\boldsymbol{u}_h$ to a projector,
denoted by $S_h\boldsymbol{u}\in V_h$, that is defined by
\begin{align}\label{eqn:Sh definition}
\begin{aligned}
a(S_h\boldsymbol{u},\boldsymbol{\psi}_h)+b(\boldsymbol{\psi}_h,\delta_h)&= a(\boldsymbol{u},\boldsymbol{\psi}_h),\quad \forall\boldsymbol{\psi}_h\in V_h,\\
b(S_h\boldsymbol{u},\phi_h)&=(g,\phi_h),\quad \forall \phi_h\in Q_h.
\end{aligned}
\end{align}
Here $\delta_h$ denotes the pressure. 
Then for any $\boldsymbol{v}_h\in V_h$, from (\ref{eqn:Sh definition}) and noting $(g,\phi_h)=b(\boldsymbol{u},\phi_h)$, we have 
\begin{align*}
a(S_h\boldsymbol{u}-\boldsymbol{v}_h,\boldsymbol{\psi}_h)+b(\boldsymbol{\psi}_h, \delta_h)&=a(\boldsymbol{u}-\boldsymbol{v}_h,\boldsymbol{\psi}_h),\quad \forall\boldsymbol{\psi}_h\in V_h,\\
b(S_h\boldsymbol{u}-\boldsymbol{v}_h,\phi_h)&=b(\boldsymbol{u}-\boldsymbol{v}_h,\phi_h),\quad \forall \phi_h\in Q_h.
\end{align*}
Since $(V_h,Q_h)$ is LBB-stable shown in Lemma~4.3 in \cite{Layton2002} combing Lemma~\ref{lem:projection operatro Upslion_1}, and operator $a(\cdot,\cdot)$ is coercive for $V^s\times\{\boldsymbol{\psi}^d\in V^d~|~\nabla\cdot\boldsymbol{\psi}^d=0\}$ shown in Lemma~3.1 in \cite{Layton2002}, then the abstract bound estimates (\cite{Brezzi1991}, Chap II. Theorem~1.2) implies
\begin{align*}
\|S_h\boldsymbol{u}-\boldsymbol{v}_h\|_X\lesssim \|\boldsymbol{u}-\boldsymbol{v}_h\|_X,
\end{align*}
which equals 
\begin{align}\label{eqn:error u and Sh u}
\|\boldsymbol{u}-S_h\boldsymbol{u}\|_X\leq \inf_{\boldsymbol{v}_h\in V_h}(\|\boldsymbol{u}-\boldsymbol{v}_h\|_X+\|S_h\boldsymbol{u}-\boldsymbol{v}_h\|_X)
\lesssim \inf_{\boldsymbol{v}_h\in V_h}\|\boldsymbol{u}-\boldsymbol{v}_h\|_X.
\end{align} 
This shows convergence rates corresponding to the regularity of $\boldsymbol{u}$  and the polynomial order of $V_h$. From the triangle inequality, this approximation result 
$\|\boldsymbol{u}-\boldsymbol{u}_h\|_X^2$ is only perturbed by the term $\|S_h\boldsymbol{u}-\boldsymbol{u}_h\|_X^2$ which therefore is the primal object of interest in the a priori error analysis below.
The error estimates involving the previously defined projector $S_h:V(g)\rightarrow V_h(g)$ in (\ref{eqn:Sh definition}) needs to be discussed in the following lemma.

\begin{lemma}\label{lem:H estimate}
If the exact solutions of (\ref{eqn:state weak 1})-(\ref{eqn:state weak 2}) have the regularity
$\boldsymbol{u}^s\in [H^2(\Omega^s)]^N$, and $p^i\in H^1(\Omega^i), i=s,d$,
it holds
\begin{align}\label{eqn:error estimate pressure depend}
\begin{aligned}
\|S_h\boldsymbol{u}-\boldsymbol{u}_h\|_X^2
\lesssim &\frac{1}{\mu^2}\left(\inf_{\phi_h\in Q_h^s}\|p^s-\phi_h\|_s^2
+h\inf_{\phi_h\in\Lambda_h}\|p^d-\phi_h\|_{\Gamma}^2\right),
\end{aligned}
\end{align}
for the solution of $\boldsymbol{u}_h$ of (\ref{eqn:PDE problem clssical}) and 
the discrete approximation $S_h\boldsymbol{u}$ of 
the exact solutions $\boldsymbol{u}$ of (\ref{eqn:state weak 1})-(\ref{eqn:state weak 2}).
\end{lemma}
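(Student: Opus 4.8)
The plan is to control $\boldsymbol{e}_h:=S_h\boldsymbol{u}-\boldsymbol{u}_h\in V_h$ by an energy argument. First I would subtract the two divergence equations, the second line of (\ref{eqn:Sh definition}) and the second line of (\ref{eqn:PDE problem clssical}), to obtain $b(\boldsymbol{e}_h,\phi_h)=0$ for all $\phi_h\in Q_h$, i.e.\ $\boldsymbol{e}_h\in V_h(0)$. Subtracting the two momentum equations then gives $a(\boldsymbol{e}_h,\boldsymbol{\psi}_h)+b(\boldsymbol{\psi}_h,\delta_h-p_h)=a(\boldsymbol{u},\boldsymbol{\psi}_h)-(\boldsymbol{f},\boldsymbol{\psi}_h)$ for every $\boldsymbol{\psi}_h\in V_h$. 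Testing with $\boldsymbol{\psi}_h=\boldsymbol{e}_h$ and using that $\delta_h-p_h\in Q_h$ together with $\boldsymbol{e}_h\in V_h(0)$ annihilates the middle term, leaving the identity $a(\boldsymbol{e}_h,\boldsymbol{e}_h)=a(\boldsymbol{u},\boldsymbol{e}_h)-(\boldsymbol{f},\boldsymbol{e}_h)$, whose right-hand side is the consistency error.

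The heart of the proof is to rewrite this consistency error through the pressure functional $\vartheta_p$. Here I would integrate by parts element-by-element in each sub-domain and substitute the strong forms (\ref{eqn:state Stokes})--(\ref{eqn:state Darcy}): the Stokes momentum equation converts $2\mu(D(\boldsymbol{u}^s),D(\boldsymbol{e}_h^s))_s$ into the volume term $(p^s,\nabla\cdot\boldsymbol{e}_h^s)_s$ together with the interface terms $2\mu\langle D(\boldsymbol{u}^s)\boldsymbol{n}^s,\boldsymbol{e}_h^s\rangle_\Gamma-\langle p^s,\boldsymbol{e}_h^s\cdot\boldsymbol{n}^s\rangle_\Gamma$, while the Darcy equation produces $(p^d,\nabla\cdot\boldsymbol{e}_h^d)_d-\langle p^d,\boldsymbol{e}_h^d\cdot\boldsymbol{n}^d\rangle_\Gamma$. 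Splitting $\boldsymbol{e}_h^s$ on $\Gamma$ into its normal and tangential parts, the BJS law (\ref{eqn:state boundary 3}) shows that $a_I(\boldsymbol{u}^s,\boldsymbol{e}_h^s)$ cancels the full tangential stress and leaves only the normal stress $2\mu\langle D(\boldsymbol{u}^s)\boldsymbol{n}^s\cdot\boldsymbol{n}^s,\boldsymbol{e}_h^s\cdot\boldsymbol{n}^s\rangle_\Gamma$, which the normal-force balance (\ref{eqn:state boundary 2}) rewrites via $2\mu D(\boldsymbol{u}^s)\boldsymbol{n}^s\cdot\boldsymbol{n}^s=p^s-p^d$. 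After these cancellations the surface contribution collapses to $\langle p^d,\boldsymbol{e}_h^s\cdot\boldsymbol{n}^s+\boldsymbol{e}_h^d\cdot\boldsymbol{n}^d\rangle_\Gamma$ and the volume contribution to $b(\boldsymbol{e}_h,p)$, so the consistency error becomes, up to sign, exactly $\vartheta_p(\boldsymbol{e}_h)$ from (\ref{eqn:consistency error functional}).

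The remaining steps are then routine. On the left I would invoke the coercivity of $a$ on the discrete kernel $V_h(0)$ (Lemma 3.1 of \cite{Layton2002}): Korn's inequality on $\Omega^s$, the spectral bound $K_U^{-1}|\xi|^2\le\xi^TK^{-1}\xi$ on $\Omega^d$, and the control of $\|\nabla\cdot\boldsymbol{e}_h^d\|_d$ by $|\boldsymbol{e}_h^s|_{1,s}$ combine to give $a(\boldsymbol{e}_h,\boldsymbol{e}_h)\gtrsim\mu\|\boldsymbol{e}_h\|_X^2$ with a constant independent of $\mu$ and $h$, the factor $\mu$ arising because every term of $a$ carries it. On the right, since $\boldsymbol{e}_h\in V_h(0)$, Lemma~\ref{lem:consistency error} bounds $|\vartheta_p(\boldsymbol{e}_h)|$ by $\big(\inf_{\phi_h\in Q_h^s}\|p^s-\phi_h\|_s^2+h\inf_{\phi_h\in\Lambda_h}\|p^d-\phi_h\|_\Gamma^2\big)^{1/2}\|\boldsymbol{e}_h\|_X$. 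Combining the two estimates, cancelling one power of $\|\boldsymbol{e}_h\|_X$, and squaring produces the claimed $\mu^{-2}$ bound (\ref{eqn:error estimate pressure depend}).

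I expect the consistency-error reduction of the second paragraph to be the main obstacle, precisely because $V_h\not\subset V$: the interface jump $\boldsymbol{e}_h^s\cdot\boldsymbol{n}^s+\boldsymbol{e}_h^d\cdot\boldsymbol{n}^d$ does not vanish pointwise on $\Gamma$, so every boundary contribution from the two sub-domain integrations by parts must be tracked and shown to coalesce, through exactly the interface conditions (\ref{eqn:state boundary 2})--(\ref{eqn:state boundary 3}), into the single surface term of $\vartheta_p$. Once that identity is secured, the fact that only the Stokes $L^2$-pressure best approximation and the $\Gamma$-trace best approximation of $p^d$ enter the bound, with no Darcy $L^2$-pressure term, is supplied by Lemma~\ref{lem:consistency error}, whose hypothesis $\boldsymbol{e}_h\in V_h(0)$ we have already verified.
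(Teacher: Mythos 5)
Your proposal is correct and follows essentially the same route as the paper: subtract (\ref{eqn:PDE problem clssical}) from (\ref{eqn:Sh definition}), test with $\boldsymbol{e}_h=\boldsymbol{u}_h-S_h\boldsymbol{u}\in V_h(0)$ so the discrete pressure term drops, reduce the consistency error to $\vartheta_p(\boldsymbol{e}_h)$ via element-wise integration by parts, the strong equations, and the interface conditions (\ref{eqn:state boundary 2})--(\ref{eqn:state boundary 3}), and close with Lemma~\ref{lem:consistency error} together with the $\mu$-scaled coercivity of $a(\cdot,\cdot)$ on the discrete kernel. The only difference is that you make explicit the coercivity ingredients (Korn's inequality, the spectral bound on $K$, and the bound of $\|\nabla\cdot\boldsymbol{e}_h^d\|_d$ by $|\boldsymbol{e}_h^s|_{1,s}$ using the zero-mean structure of $Q_h$) which the paper delegates to Lemma~3.1 of \cite{Layton2002}, and this added detail is correct.
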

\begin{proof}
From (\ref{eqn:PDE problem clssical}) and (\ref{eqn:Sh definition}), it is easy to get
\begin{align}\label{eqn:error eqn uh-Shu classical}
a(\boldsymbol{u}_h-S_h\boldsymbol{u},\boldsymbol{\psi}_h)
+b(\boldsymbol{\psi}_h,p_h-\delta_h)
&=\mathcal{G}(\boldsymbol{\psi}_h),\quad \boldsymbol{\psi}_h\in V_h,
\end{align}
where
\begin{align*}
\mathcal{G}(\boldsymbol{\psi}_h)
=(\boldsymbol{f},\boldsymbol{\psi}_h)
-a(\boldsymbol{u},\boldsymbol{\psi}_h),\quad \boldsymbol{\psi}_h\in V_h.
\end{align*}
For any $\boldsymbol{\psi}_h\in V_h$, according to the definition of $a(\cdot,\cdot)$ in (\ref{eqn:state weak 1}), we have
\begin{align}
\label{eqn:u and psi_h}
a(\boldsymbol{u},\boldsymbol{\psi}_h)
&=2\mu(D(\boldsymbol{u}^s),D(\boldsymbol{\psi}_h^s))_s
+\mu(K^{-1}\boldsymbol{u}^d,\boldsymbol{\psi}_h^d)_d
+\sum_{j=1}^{N-1}\frac{\alpha_1\mu}{\sqrt{\kappa_j}}\langle\boldsymbol{u}^s\cdot\boldsymbol{\tau}_j
,\boldsymbol{\psi}_h^s\cdot\boldsymbol{\tau}_j\rangle_{\Gamma}.
\end{align}
Since $\boldsymbol{u}^s\in [H^2(\Omega^s)]^N$ and $p^i\in H^1(\Omega^i), i=s,d$, the solutions $(\boldsymbol{u},p)$ of (\ref{eqn:state weak 1})$\sim$(\ref{eqn:state weak 2}) satisfy (\ref{eqn:state Stokes})$\sim$(\ref{eqn:state boundary 3}).
Then, based on the usual integration by parts and the boundary condition (\ref{eqn:state boundary 3}), the first and the third summands in (\ref{eqn:u and psi_h}) can be derived into
\begin{align}
\label{eqn:the first summand}
&2\mu(D(\boldsymbol{u}^s),D(\boldsymbol{\psi}_h^s))_s=-2\mu(\nabla\cdot D(\boldsymbol{u}^s),\boldsymbol{\psi}_h^s)_s
+2\mu\langle D(\boldsymbol{u}^s)\boldsymbol{n}^s,\boldsymbol{\psi}_h^s\rangle_{\Gamma},\\
\label{eqn:the third summand}
&\sum_{j=1}^{N-1}\frac{\alpha_1\mu}{\sqrt{\kappa_j}}\langle\boldsymbol{u}^s\cdot\boldsymbol{\tau}_j
,\boldsymbol{\psi}_h^s\cdot\boldsymbol{\tau}_j\rangle_{\Gamma}
=\sum_{j=1}^{N-1}-2\mu\langle D(\boldsymbol{u}^s)\boldsymbol{n}^s\cdot\boldsymbol{\tau}_j
,\boldsymbol{\psi}_h^s\cdot\boldsymbol{\tau}_j\rangle_{\Gamma}.
\end{align}

From (\ref{eqn:u and psi_h}), (\ref{eqn:the first summand}), (\ref{eqn:the third summand}), and noting the Stokes-Darcy equations in (\ref{eqn:state Stokes}) and (\ref{eqn:state Darcy}), and basing on the equality
\begin{align}
\label{eqn:the fourth summand}
2\mu\langle D(\boldsymbol{u}^s)\boldsymbol{n}^s,\boldsymbol{\psi}_h^s\rangle_{\Gamma}
=\sum_{j=1}^{N-1}2\mu\langle D(\boldsymbol{u}^s)\boldsymbol{n}^s\cdot\boldsymbol{\tau}_j
,\boldsymbol{\psi}_h^s\cdot\boldsymbol{\tau}_j\rangle_{\Gamma}
+2\mu\langle D(\boldsymbol{u}^s)\boldsymbol{n}^s\cdot\boldsymbol{n}^s
,\boldsymbol{\psi}_h^s\cdot\boldsymbol{n}^s\rangle_{\Gamma},
\end{align}
we have
\begin{align*}
a(\boldsymbol{u},\boldsymbol{\psi}_h)=(\boldsymbol{f},\boldsymbol{\psi}_h)
-(\nabla p^s,\boldsymbol{\psi}_h^s)_s
-(\nabla p^d,\boldsymbol{\psi}_h^d)_d
+2\mu\langle D(\boldsymbol{u}^s)\boldsymbol{n}^s\cdot\boldsymbol{n}^s
,\boldsymbol{\psi}_h^s\cdot\boldsymbol{n}^s\rangle_{\Gamma}.
\end{align*}
By using the integration by parts for the second and third terms in the above equation, and nothing the boundary condition in (\ref{eqn:state boundary 2}), it can be obtained that
\begin{align*}
-(\nabla p^s,\boldsymbol{\psi}_h^s)_s
-(\nabla p^d,\boldsymbol{\psi}_h^d)_d
=&(p^s,\nabla\cdot\boldsymbol{\psi}_h^s)_s
-\langle p^s,\boldsymbol{\psi}_h^s\cdot\boldsymbol{n}^s\rangle_{\Gamma}
+(p^d,\nabla\cdot\boldsymbol{\psi}_h^d)_d
-\langle p^d,\boldsymbol{\psi}_h^d\cdot\boldsymbol{n}^d\rangle_{\Gamma}\\
=&-b(\boldsymbol{\psi}_h,p)-2\mu\langle D(\boldsymbol{u}^s)\boldsymbol{n}^s\cdot\boldsymbol{n}^s
,\boldsymbol{\psi}_h^s\cdot\boldsymbol{n}^s\rangle_{\Gamma}
-\langle p^d,\boldsymbol{\psi}_h^s\cdot\boldsymbol{n}^s
+\boldsymbol{\psi}_h^d\cdot\boldsymbol{n}^d\rangle_{\Gamma}
\end{align*}
Thus, combining with (\ref{eqn:consistency error functional}), we have
\begin{align}
\label{eqn:u and phi_h simple form}
\begin{aligned}
\mathcal{G}(\boldsymbol{\psi}_h)
=
b(\boldsymbol{\psi}_h,p)+\langle p^d,\boldsymbol{\psi}_h^s\cdot\boldsymbol{n}^s
+\boldsymbol{\psi}_h^d\cdot\boldsymbol{n}^d\rangle_{\Gamma}
=\vartheta_p(\boldsymbol{\psi}_h),
\end{aligned}
\end{align}
which implies the conclusion with Lemma~\ref{lem:consistency error}.

Set $\boldsymbol{\psi}_h=\boldsymbol{u}_h-S_h\boldsymbol{u}\in V_h(0)$ in (\ref{eqn:error eqn uh-Shu classical}), it can be obtained $b(\boldsymbol{u}_h-S_h\boldsymbol{u},p_h-\delta_h)=0$. Then, (\ref{eqn:error eqn uh-Shu classical}), (\ref{eqn:u and phi_h simple form}) and Lemma~\ref{lem:consistency error} imply the conclusion. 
\end{proof}

\begin{theorem}\label{the:classical estimate}
If the exact solutions of (\ref{eqn:state weak 1})-(\ref{eqn:state weak 2}) have the regularity
$(\boldsymbol{u}^s,\boldsymbol{u}^d)\in [H^r(\Omega^s)]^N\times [H^{r-1}(\Omega^d)]^N$, $\nabla\cdot \boldsymbol{u}^d\in H^{r-1}(\Omega^d)$, and $p^i\in H^{r-1}(\Omega^i), i=s,d$ for any $k\leq r\leq k+1$ with $k\geq 2$,
it holds
\begin{align}\label{eqn:error estimate regular pressure depend}
\begin{aligned}
\|\boldsymbol{u}-\boldsymbol{u}_h\|_X^2
\lesssim h^{2(r-1)}&\left(\|\boldsymbol{u}^s\|_{r,s}^2+\|\boldsymbol{u}^d\|_{r-1,d}^2
+\|\nabla\cdot\boldsymbol{u}^d\|_{r-1,d}^2\right.\\
&\left.+\frac{1}{\mu^2}\|p^s\|_{r-1,s}^2+\frac{1}{\mu^2}\|p^d\|_{r-1,d}^2\right),
\end{aligned}
\end{align}
and
\begin{align}\label{eqn:error estimate of p regular pressure depend}
\|p-p_h\|^2
\lesssim &h^{2(r-1)}(\mu^2\|\boldsymbol{u}^s\|_{r,s}^2+\mu^2\|\boldsymbol{u}^d\|_{r-1,d}^2+\mu^2\|\nabla\cdot\boldsymbol{u}^d\|_{r-1,d}^2
+\|p^s\|_{r-1,s}^2+\|p^d\|_{r-1,d}^2).
\end{align}
\end{theorem}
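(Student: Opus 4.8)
The plan is to prove the two estimates separately. For the velocity bound I would reduce to the two ingredients already prepared — the quasi-optimality of $S_h\boldsymbol{u}$ in~(\ref{eqn:error u and Sh u}) and the distance bound of Lemma~\ref{lem:H estimate} — and for the pressure bound I would run a dedicated inf-sup argument rather than relying on the coupled estimate~(\ref{eqn:u-uh pre-estimate}), because the latter carries the pressure-approximation and consistency terms with a factor $1/\mu$ and so produces the wrong viscosity scaling for $p-p_h$.

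For the velocity estimate~(\ref{eqn:error estimate regular pressure depend}), I would first split $\|\boldsymbol{u}-\boldsymbol{u}_h\|_X\le\|\boldsymbol{u}-S_h\boldsymbol{u}\|_X+\|S_h\boldsymbol{u}-\boldsymbol{u}_h\|_X$, bound the second summand by Lemma~\ref{lem:H estimate} and the first by~(\ref{eqn:error u and Sh u}), obtaining
\begin{align*}
\|\boldsymbol{u}-\boldsymbol{u}_h\|_X^2\lesssim \inf_{\boldsymbol{v}_h\in V_h}\|\boldsymbol{u}-\boldsymbol{v}_h\|_X^2+\frac{1}{\mu^2}\Big(\inf_{\phi_h\in Q_h^s}\|p^s-\phi_h\|_s^2+h\inf_{\phi_h\in\Lambda_h}\|p^d-\phi_h\|_\Gamma^2\Big).
\end{align*}
It then remains to insert standard approximation estimates. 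For the velocity best-approximation I would exhibit a competitor $\boldsymbol{v}_h\in V_h$ built from the canonical degree-$k$ interpolant on $\Omega^s$ and the $RT_{k-1}$ interpolant on $\Omega^d$; since $\boldsymbol{u}\in V$ satisfies exact normal continuity and both interpolants preserve the normal moments tested by $\Lambda_h$, the pair lies in $V_h$, and componentwise interpolation together with the commuting-diagram property of $RT_{k-1}$ gives $\inf_{V_h}\|\boldsymbol{u}-\boldsymbol{v}_h\|_X^2\lesssim h^{2(r-1)}(\|\boldsymbol{u}^s\|_{r,s}^2+\|\boldsymbol{u}^d\|_{r-1,d}^2+\|\nabla\cdot\boldsymbol{u}^d\|_{r-1,d}^2)$. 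For the pressure terms I would use $\inf_{Q_h^s}\|p^s-\phi_h\|_s\lesssim h^{r-1}\|p^s\|_{r-1,s}$ and the scaled trace-approximation bound $\inf_{\Lambda_h}\|p^d-\phi_h\|_\Gamma^2\lesssim h^{2r-3}\|p^d\|_{r-1,d}^2$, so the weighted interface term contributes $h\cdot h^{2r-3}=h^{2(r-1)}$; collecting terms yields~(\ref{eqn:error estimate regular pressure depend}).

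For the pressure estimate~(\ref{eqn:error estimate of p regular pressure depend}) I would write $p-p_h=(p-\Pi_h p)+(\Pi_h p-p_h)$ with $\Pi_h p\in Q_h$ the $L^2$-best approximation, and estimate the discrete part from the inf-sup stability of $(V_h,Q_h)$ (Lemma~4.3 of~\cite{Layton2002} with Lemma~\ref{lem:projection operatro Upslion_1}), $\|\Pi_h p-p_h\|\lesssim \sup_{\boldsymbol{\psi}_h\in V_h} b(\boldsymbol{\psi}_h,\Pi_h p-p_h)/\|\boldsymbol{\psi}_h\|_X$. Using the discrete momentum equation in~(\ref{eqn:PDE problem clssical}) and the identity $(\boldsymbol{f},\boldsymbol{\psi}_h)-a(\boldsymbol{u},\boldsymbol{\psi}_h)=\vartheta_p(\boldsymbol{\psi}_h)$ from~(\ref{eqn:u and phi_h simple form}), the numerator splits into $b(\boldsymbol{\psi}_h,\Pi_h p-p)$, the bilinear-form part $a(\boldsymbol{u}-\boldsymbol{u}_h,\boldsymbol{\psi}_h)$, and the interface consistency term contained in $\vartheta_p$. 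The first is bounded by $h^{r-1}\|p\|\,\|\boldsymbol{\psi}_h\|_X$, while the second, by continuity of $a$, is bounded by $\mu\|\boldsymbol{u}-\boldsymbol{u}_h\|_X\|\boldsymbol{\psi}_h\|_X$; after inserting~(\ref{eqn:error estimate regular pressure depend}) this gives the $\mu h^{r-1}\|\boldsymbol{u}\|$ contribution and, through the $1/\mu$ already present in the velocity bound, the $h^{r-1}\|p\|$ contribution, i.e.\ the claimed $\mu^2\|\boldsymbol{u}\|^2+\|p\|^2$ structure with no residual $1/\mu$.

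The step I expect to be the main obstacle is the interface consistency term $\langle\boldsymbol{\psi}_h^s\cdot\boldsymbol{n}^s+\boldsymbol{\psi}_h^d\cdot\boldsymbol{n}^d,p^d\rangle_\Gamma$, which must now be controlled for \emph{every} $\boldsymbol{\psi}_h\in V_h$ rather than only on the divergence-free subspace $V_h(0)$ on which Lemma~\ref{lem:consistency error} operates; equivalently, I need the consistency error $\mathcal{H}$ of~(\ref{eqn:u-uh pre-estimate}) at the optimal order $h^{r-1}\|p^d\|_{r-1,d}$, uniformly in $\mu$. The naive trace bound on the Darcy normal component loses a factor $h^{1/2}$ and would spoil the rate. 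The remedy is to observe that membership in $V_h$ forces $\boldsymbol{\psi}_h^s\cdot\boldsymbol{n}^s+\boldsymbol{\psi}_h^d\cdot\boldsymbol{n}^d$ to be $L^2(\Gamma)$-orthogonal to $\Lambda_h$, and since the $RT_{k-1}$ normal trace already lies in $\Lambda_h$, the jump reduces to the high-frequency part of the \emph{continuous} $H^1$-trace of $\boldsymbol{\psi}_h^s$ alone; subtracting an arbitrary $\lambda_h\in\Lambda_h$ from $p^d$ and integrating along $\Gamma$ using the continuity of that trace recovers the missing $h^{1/2}$, giving the weighted bound $\lesssim (h\inf_{\Lambda_h}\|p^d-\phi_h\|_\Gamma^2)^{1/2}\|\boldsymbol{\psi}_h\|_X$ of Lemma~\ref{lem:consistency error} but now valid on all of $V_h$. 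Dividing by $\|\boldsymbol{\psi}_h\|_X$, taking the supremum, adding $\|p-\Pi_h p\|\lesssim h^{r-1}\|p\|$, and squaring then delivers~(\ref{eqn:error estimate of p regular pressure depend}).
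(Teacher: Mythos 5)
Your proposal is correct and follows essentially the same route as the paper: the velocity bound via the triangle inequality through $S_h\boldsymbol{u}$, Lemma~\ref{lem:H estimate} and (\ref{eqn:error u and Sh u}) combined with standard interpolation and trace-based projection estimates, and the pressure bound via the $L^2$ projection split, the discrete inf-sup condition, the identity (\ref{eqn:u and phi_h simple form}), and the same three-term decomposition of $b(\boldsymbol{\psi}_h,P_hp-p_h)$. Your explicit handling of the interface consistency term on all of $V_h$ — exploiting the $\Lambda_h$-orthogonality built into the definition (\ref{eqn:definition of Vh}) to reduce the jump to the high-frequency part of the $H^1$ trace and recover the factor $h^{1/2}$ — is precisely the mechanism the paper invokes when it cites Lemma~\ref{lem:consistency error} (whose statement is nominally restricted to $V_h(0)$), so on that point your write-up is, if anything, slightly more careful than the paper's.
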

\begin{proof}

From (\ref{eqn:error u and Sh u}) and Lemma~\ref{lem:H estimate}, we have
\begin{align}\label{eqn:u estimate}
\|\boldsymbol{u}-\boldsymbol{u}_h\|_X^2\lesssim \inf_{\boldsymbol{\psi}_h\in V_h}\|\boldsymbol{u}-\boldsymbol{\psi}_h\|_X^2+\frac{1}{\mu^2}\left(\inf_{\phi_h\in Q_h^s}\|p^s-\phi_h\|_s^2
+h\inf_{\phi_h\in\Lambda_h}\|p^d-\phi_h\|_{\Gamma}^2\right).
\end{align}
The first term on the right-hand side of the (\ref{eqn:u estimate}) can be directly estimated using the interpolation inequality introduced in \cite{Layton2002} (in which, refer to (4.3), (4.13), (4.14), and (4.35)), i.e.
\begin{align}\label{eqn:approximation error u and z}
\begin{aligned}
&\inf_{\boldsymbol{\psi}_h\in V_h}\|\boldsymbol{u}-\boldsymbol{\psi}_h\|_X^2
\lesssim  h^{2(r-1)}(\|\boldsymbol{u}^s\|_{r,s}^2+\|\boldsymbol{u}^d\|_{r-1,d}^2+\|\nabla\cdot \boldsymbol{u}^d\|_{r-1,d}^2).
\end{aligned}
\end{align}
We now focus on estimating the second term.
If $p^i\in H^{r-1}(\Omega^i), i=s,d$ for any $k\leq r\leq k+1$ with $k\geq 2$, let $\phi_h^d$ to be the local $L^2$ projection of $p^d$ into $\{q_h\in L^2(\Omega^d)~|~q_{h|T}\in P_{k-1}(T),~T\in \mathcal{T}_h(\Omega^d)\}$.
From 
projection property and trace inequality, we have
\begin{align}\label{eqn:cosistency error p}
\begin{aligned}
&\inf_{\phi_h\in Q_h^s}\|p^s-\phi_h\|_s^2
+h\inf_{\phi_h\in\Lambda_h}\|p^d-\phi_h\|_{\Gamma}^2\\
\lesssim & h^{2(r-1)}\|p^s\|_{r-1,s}^2+ h\|p^d-\phi_h^d\|_{\Gamma}^2\\
\lesssim & h^{2(r-1)}\|p^s\|_{r-1,s}^2+ h\sum_{T\in\mathcal{T}_h(\Gamma^d)}h_T^{-1}\|p^d-\phi_h^d\|_T^2+h_T\|\nabla(p^d-\phi_h^d)\|_T^2\\
\lesssim & h^{2(r-1)}\|p^s\|_{r-1,s}^2+h\sum_{T\in\mathcal{T}_h(\Gamma^d)}h_T^{2(r-1)-1}\|p^d\|_{r-1,T}^2\\
\lesssim & h^{2(r-1)}(\|p^s\|_{r-1,s}^2+\|p^d\|_{r-1,d}^2),
\end{aligned}
\end{align}
where $\mathcal{T}_h(\Gamma^d)=\{T\in\mathcal{T}_h(\Omega^d)~|~T\cap\Gamma\neq\emptyset\}$. In the above estimates, the first and third inequalities are derived using the approximation property of the projection, while the second inequality employs the trace inequality. Thus, (\ref{eqn:u estimate}), (\ref{eqn:approximation error u and z}) and (\ref{eqn:cosistency error p}) yield (\ref{eqn:error estimate regular pressure depend}).

Next, we show the proof of (\ref{eqn:error estimate of p regular pressure depend}) with the regularity
$(\boldsymbol{u}^s,\boldsymbol{u}^d)\in [H^r(\Omega^s)]^N\times [H^{r-1}(\Omega^d)]^N$, $\nabla\cdot \boldsymbol{u}^d\in H^{r-1}(\Omega^d)$, and $p^i\in H^{r-1}(\Omega^i), i=s,d$ for any $k\leq r\leq k+1$ with $k\geq 2$. Let $P_h p$ to be the local $L^2$ projection of $p$ into $Q_h$.
From (\ref{eqn:PDE problem clssical}) and (\ref{eqn:u and phi_h simple form}), we can get
\begin{align}
\label{eqn:b operator equality}
\begin{aligned}
b(\boldsymbol{\psi}_h,P_h p-p_h)
=&b(\boldsymbol{\psi}_h,P_h p)+a(\boldsymbol{u}_h,\boldsymbol{\psi}_h)-(\boldsymbol{f},\boldsymbol{\psi}_h)\\
=&a(\boldsymbol{u},\boldsymbol{\psi}_h)+b(\boldsymbol{\psi}_h,P_h p)
-(\boldsymbol{f},\boldsymbol{\psi}_h)-a(\boldsymbol{u}-\boldsymbol{u}_h,\boldsymbol{\psi}_h)\\
=& b(\boldsymbol{\psi}_h,P_h p-p)
-\langle p^d,\boldsymbol{\psi}_h^s\cdot\boldsymbol{n}^s
+\boldsymbol{\psi}_h^d\cdot\boldsymbol{n}^d\rangle_{\Gamma}
-a(\boldsymbol{u}-\boldsymbol{u}_h,\boldsymbol{\psi}_h).
\end{aligned}
\end{align}
The first equality in the above equation employs (\ref{eqn:PDE problem clssical}), and the third utilizes (\ref{eqn:u and phi_h simple form}).

Combining the boundedness of operators $a(\cdot,\cdot)$ and $b(\cdot,\cdot)$, (\ref{eqn:b operator equality}), and 
Lemma~\ref{lem:consistency error},
we conclude that
\begin{align*}
b(\boldsymbol{\psi}_h,P_h p-p_h)\lesssim (\|P_h p-p\|+h^{1/2}\inf_{\phi_h\in\Lambda_h}\|p^d-\phi_h\|_{\Gamma}+\mu\|\boldsymbol{u}-\boldsymbol{u}_h\|_X)\|\boldsymbol{\psi}\|_X,
\end{align*}
and
\begin{align}
\label{eqn:Ph p-p}
\begin{aligned}
\|P_h p-p_h\|\lesssim &\sup_{\boldsymbol{\psi}_h\in V_h}\frac{b(\boldsymbol{\psi}_h,P_h p-p_h)}{\|\boldsymbol{\psi}_h\|_X}\\
\lesssim &\|P_h p-p\|+h^{1/2}\inf_{\phi_h\in\Lambda_h}\|p^d-\phi_h\|_{\Gamma}+\mu\|\boldsymbol{u}-\boldsymbol{u}_h\|_X.
\end{aligned}
\end{align}
From the triangle inequality, (\ref{eqn:Ph p-p}), the projection property for $\|P_h p-p\|$, a discussion similar to (\ref{eqn:cosistency error p}) for $h^{1/2}\inf_{\phi_h\in\Lambda_h}\|p^d-\phi_h\|_{\Gamma}$, and (\ref{eqn:error estimate regular pressure depend}),  we get (\ref{eqn:error estimate of p regular pressure depend}) and complete the proof.
\end{proof}

\begin{remark}
The pressure dependency come from the presence of a function in $V_h(0)$, which either does not satisfy exactly divergence-free or does not satisfy interface normal continuity.
\end{remark}
\section{Pressure-robust discretization}
In this section,  we will apply a reconstruction operator to the discretization scheme to achieve exactly divergence-free and interface normal continuity, thereby obtaining a pressure-robust discretization. Therefore, for any $T\in\mathcal{T}_h$, we consider the Raviart-Thomas space $RT_{k-1}(T)=[P_{k-1}(T)]^N+\boldsymbol{x}P_{k-1}(T)$ for $k\geq 2$. We get for $\boldsymbol{v}_h\in RT_{k-1}(T)$ that $\nabla\cdot\boldsymbol{v}_h\in P_{k-1}(T)$,
$\boldsymbol{v}_h\cdot\boldsymbol{n}|_e\in P_{k-1}(e), e\subset\partial T$.
Then, define
\begin{align*}
\Theta_h^i=\{\boldsymbol{v}_h\in H(div;\Omega^i)~|~\boldsymbol{v}_{h|T}\in RT_{k-1}(T),\forall T\in\mathcal{T}_h, \boldsymbol{v}_h\cdot \boldsymbol{n}=0 \mbox{~on~}\Gamma^i\},
\end{align*}
with $i=s,d$, respectively. 
Note that $\Theta_h^d=V_h^d$.
Then, we define reconstruction operators $\Pi^i_h: V^i\rightarrow \Theta_h^i$ locally by 
\begin{align}
\label{eqn:Pi 1}
&(\Pi_h^i\boldsymbol{v}-\boldsymbol{v},\boldsymbol{\psi}_h)_T=0,\quad \boldsymbol{\psi}_h\in [P_{k-2}(T)]^N,\\
\label{eqn:Pi 2}
&\langle(\Pi_h^i\boldsymbol{v}-\boldsymbol{v})\cdot\boldsymbol{n},q_h\rangle_e=0,\quad
q_h\in P_{k-1}(e),
e\subset \partial T,
\end{align}
for any element  $T\subset\Omega^i$. From (2.5.10) and Proposition 2.5.1 in \cite{Boffi2013}, the reconstruction operators are well defined with the property
\begin{align}\label{eqn:Pi inequality}
\|\boldsymbol{v}-\Pi_h^i\boldsymbol{v}\|_{\ell,T}\lesssim h_T^{m-\ell}|\boldsymbol{v}|_{m,T},\quad  \ell=0,1,
\end{align}
for any $\boldsymbol{v}\in H^m(\Omega^i)$ and $1\leq m\leq k$.

Next, we rewrite the Lemma~4.1 in \cite{Zhang2025} into a more detailed conclusion. The proof is the same as the original one. To maintain the integrity of the text, we still provide the proof in Appendix.
\begin{lemma}\label{lem:Pi}
Define $\Pi_h=\Pi_h^s\times \Pi_h^d: V^s\times V^d\rightarrow \Theta_h=\Theta_h^s\times \Theta_h^d$, which
has the following properties
\begin{align}
&\Pi_h:V_h\rightarrow \Theta_h\cap \Theta_b,\\
&\Pi_h:V_h(0)\rightarrow \Theta_h\cap \Theta_d\cap \Theta_b.
\end{align}
where
\begin{align*}
\Theta_b&=\{\boldsymbol{\psi}\in H(div,\Omega)~|~  
(\boldsymbol{\psi} _s\cdot\boldsymbol{n}^s)_{|\Gamma^s}=0, ~
(\boldsymbol{\psi} _d\cdot\boldsymbol{n}^d)_{|\Gamma^d}=0,  \mbox{~and~}(\boldsymbol{\psi}^s\cdot\boldsymbol{n}^s
+\boldsymbol{\psi}^d\cdot\boldsymbol{n}^d)_{|\Gamma}=0\},\\
\Theta_d&=\{\boldsymbol{\psi}\in H(div,\Omega)~|~ \nabla\cdot\boldsymbol{\psi}=0\}.
\end{align*}
\end{lemma}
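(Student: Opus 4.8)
The plan is to exploit two structural properties of the Raviart--Thomas reconstruction $\Pi_h^i$ that follow directly from the defining moments (\ref{eqn:Pi 1})--(\ref{eqn:Pi 2}): it preserves the $P_{k-1}$ normal moments on every edge/face, and it commutes with the divergence through the local $L^2$-projection. Throughout I would identify the pair $(\Pi_h^s\boldsymbol{v}_h^s,\Pi_h^d\boldsymbol{v}_h^d)$ with a single field on $\Omega$ and verify the interface matching that makes this gluing an $H(div,\Omega)$ function, since $\Theta_b$ and $\Theta_d$ live in $H(div,\Omega)$ whereas $\Theta_h=\Theta_h^s\times\Theta_h^d$ is a product space. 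The global conditions defining $\Theta_b$ and $\Theta_d$ will be reduced to edgewise/elementwise statements using that $\Lambda_h$ and $Q_h$ are broken (piecewise polynomial) spaces, so test functions may be localized to a single edge or element.

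For the first inclusion I would first dispose of the outer-boundary conditions in $\Theta_b$ (those on $\Gamma^s$ and $\Gamma^d$): since $V^s$-functions vanish on $\Gamma^s$ and $V^d$-functions have zero normal trace on $\Gamma^d$, the moment identity (\ref{eqn:Pi 2}) on boundary edges gives $\langle\Pi_h^i\boldsymbol{v}_h^i\cdot\boldsymbol{n}^i,q_h\rangle_e=0$ for all $q_h\in P_{k-1}(e)$, and because $\Pi_h^i\boldsymbol{v}_h^i\cdot\boldsymbol{n}^i|_e\in P_{k-1}(e)$, choosing $q_h$ equal to this trace forces it to vanish. The core step is the interface condition. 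On an edge $e\subset\Gamma$, adding the two instances of (\ref{eqn:Pi 2}) yields $\langle\Pi_h^s\boldsymbol{v}_h^s\cdot\boldsymbol{n}^s+\Pi_h^d\boldsymbol{v}_h^d\cdot\boldsymbol{n}^d,q_h\rangle_e=\langle\boldsymbol{v}_h^s\cdot\boldsymbol{n}^s+\boldsymbol{v}_h^d\cdot\boldsymbol{n}^d,q_h\rangle_e$ for $q_h\in P_{k-1}(e)$. I would then localize the defining constraint of $V_h$ in (\ref{eqn:definition of Vh}): since $\Lambda_h$ restricted to $e$ contains the normal-trace space $P_{k-1}(e)$ of $V_h^d$ and is supported edge-by-edge, choosing $\lambda_h$ equal to $q_h$ on $e$ and zero elsewhere shows the right-hand side vanishes for every $q_h\in P_{k-1}(e)$. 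As the combined reconstructed trace itself lies in $P_{k-1}(e)$, it must be identically zero on each $e\subset\Gamma$, giving the interface condition of $\Theta_b$ and, since $\boldsymbol{n}^s=-\boldsymbol{n}^d$, normal continuity across $\Gamma$, hence $\Pi_h\boldsymbol{v}_h\in H(div,\Omega)$.

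For the second inclusion it remains to prove $\nabla\cdot\Pi_h\boldsymbol{v}_h=0$ when $\boldsymbol{v}_h\in V_h(0)$. Integration by parts on a single element $T\subset\Omega^i$ together with (\ref{eqn:Pi 1})--(\ref{eqn:Pi 2}) gives the commuting relation $(\nabla\cdot\Pi_h^i\boldsymbol{v}_h^i,\phi_h)_T=(\nabla\cdot\boldsymbol{v}_h^i,\phi_h)_T$ for all $\phi_h\in P_{k-1}(T)$, because $\nabla\phi_h\in[P_{k-2}(T)]^N$ and $\phi_h|_{\partial T}\in P_{k-1}$ activate exactly the two moment sets. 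Summing over elements and using that $\phi_h\in Q_h$ is piecewise $P_{k-1}$ on each subdomain, I obtain $(\nabla\cdot\Pi_h\boldsymbol{v}_h,\phi_h)_\Omega=-b(\boldsymbol{v}_h,\phi_h)=0$ for all $\phi_h\in Q_h$. Since $\nabla\cdot\Pi_h\boldsymbol{v}_h$ is itself a piecewise-$P_{k-1}$ function, this orthogonality leaves only a possible constant component; the zero normal trace of $\Pi_h\boldsymbol{v}_h$ on $\partial\Omega$ established above, combined with the divergence theorem, forces $\int_\Omega\nabla\cdot\Pi_h\boldsymbol{v}_h=0$, so the constant vanishes and $\Pi_h\boldsymbol{v}_h\in\Theta_d$.

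I expect the main obstacle to be the interface step rather than the divergence step: one must match the two reconstructed normal traces while only having the weak, $\Lambda_h$-tested continuity of $V_h$ at one's disposal. The observations that make it work are that the reconstructed traces live in the low space $P_{k-1}(e)$, so that orthogonality against $P_{k-1}(e)$ is enough to annihilate them, and that $\Lambda_h$ is precisely the $V_h^d$ normal-trace space and is broken across edges, permitting the localization. A minor point to handle carefully is the constant component in the divergence argument, which is resolved by the zero boundary normal trace rather than by enlarging $Q_h$.
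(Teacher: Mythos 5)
Your proposal is correct and follows essentially the same route as the paper's own proof: the interface and boundary conditions are obtained from the moment identity (\ref{eqn:Pi 2}) combined with the edgewise-localized $V_h$ constraint and the fact that the reconstructed normal traces lie in $P_{k-1}(e)$, while the divergence-free property follows from elementwise integration by parts using (\ref{eqn:Pi 1})--(\ref{eqn:Pi 2}) to get orthogonality of $\nabla\cdot\Pi_h\boldsymbol{v}_h$ against $Q_h$, with the residual constant eliminated by the vanishing boundary normal flux. Your ordering (orthogonality first, then killing the constant) merely permutes the paper's steps (zero mean first, then testing against $Q_h$), and your explicit verification that the glued field lies in $H(div,\Omega)$ is a welcome point the paper leaves implicit.
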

\begin{proof}
The proof can be found in Appendix.
\end{proof}

\begin{lemma}
For any $\boldsymbol{\psi}_h^s\in V_h^s$ and $\boldsymbol{u}^s\in [H^2(\Omega^s)]^N$, we have the following estimates
\begin{align}
\label{eqn:div 1-Pi}
\begin{aligned}
&2\mu(\nabla\cdot D(\boldsymbol{u}^s),(1-\Pi_h^s)\boldsymbol{\psi}_h^s)_s\\
\lesssim &\mu\left(h^2\sum_{T\in\mathcal{T}_h(\Omega^s)}\inf_{\boldsymbol{\varphi}_h^s\in [P_{k-2}(T)]^N}\|\nabla\cdot D(\boldsymbol{u}^s)-\boldsymbol{\varphi}_h^s\|_T^2\right)^{1/2}
\|\nabla\boldsymbol{\psi}_h^s\|_s,\\
\end{aligned}
\end{align}
\begin{align}
\label{eqn:Du n 1-Pi}
\begin{aligned}
&2\mu\langle D(\boldsymbol{u}^s)\boldsymbol{n}^s\cdot\boldsymbol{n}^s,
 (1-\Pi_h^s)\boldsymbol{\psi}_h^s\cdot\boldsymbol{n}^s\rangle_{\Gamma}\\
 \lesssim &\mu\left(h\sum_{e\in\mathcal{E}_h(\Gamma)}\inf_{q_h\in P_{k-1}(e)}
 \|D(\boldsymbol{u}^s)\boldsymbol{n}^s\cdot\boldsymbol{n}^s-q_h\|_e^2\right)^{1/2}
 \|\nabla\boldsymbol{\psi}_h^s\|_s.
 \end{aligned}
\end{align}
\end{lemma}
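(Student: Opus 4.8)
The plan is to prove the two bounds independently, in each case exploiting the defining orthogonality of the reconstruction operator $\Pi_h^s$ together with the local approximation estimate (\ref{eqn:Pi inequality}); the two proofs run in parallel except that the second one additionally requires a trace inequality.

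For (\ref{eqn:div 1-Pi}), I would first split the volume integral into element contributions, $(\nabla\cdot D(\boldsymbol{u}^s),(1-\Pi_h^s)\boldsymbol{\psi}_h^s)_s=\sum_{T\in\mathcal{T}_h(\Omega^s)}(\nabla\cdot D(\boldsymbol{u}^s),(1-\Pi_h^s)\boldsymbol{\psi}_h^s)_T$. On each $T$, property (\ref{eqn:Pi 1}) states that $(1-\Pi_h^s)\boldsymbol{\psi}_h^s$ is $L^2(T)$-orthogonal to $[P_{k-2}(T)]^N$, so I may subtract an arbitrary $\boldsymbol{\varphi}_h^s\in[P_{k-2}(T)]^N$ from the first argument without changing the value. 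A Cauchy--Schwarz inequality then gives the bound $\|\nabla\cdot D(\boldsymbol{u}^s)-\boldsymbol{\varphi}_h^s\|_T\,\|(1-\Pi_h^s)\boldsymbol{\psi}_h^s\|_T$, and invoking (\ref{eqn:Pi inequality}) with $\ell=0$, $m=1$ bounds the last factor by $h_T|\boldsymbol{\psi}_h^s|_{1,T}$. Taking the infimum over $\boldsymbol{\varphi}_h^s$, applying a discrete Cauchy--Schwarz inequality over the elements, and using $h_T\leq h$ then yields (\ref{eqn:div 1-Pi}) after multiplication by $2\mu$.

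For (\ref{eqn:Du n 1-Pi}) the structure is analogous but on the interface. I would decompose the boundary pairing over $e\in\mathcal{E}_h(\Gamma)$ and use property (\ref{eqn:Pi 2}), which says the normal trace $((1-\Pi_h^s)\boldsymbol{\psi}_h^s)\cdot\boldsymbol{n}^s$ is $L^2(e)$-orthogonal to $P_{k-1}(e)$. This lets me subtract an arbitrary $q_h\in P_{k-1}(e)$ and apply Cauchy--Schwarz on $e$, leaving $\|D(\boldsymbol{u}^s)\boldsymbol{n}^s\cdot\boldsymbol{n}^s-q_h\|_e$ multiplied by $\|((1-\Pi_h^s)\boldsymbol{\psi}_h^s)\cdot\boldsymbol{n}^s\|_e\leq\|(1-\Pi_h^s)\boldsymbol{\psi}_h^s\|_e$. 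The crucial step is to bound this edge norm: using a trace inequality $\|\boldsymbol{w}\|_e^2\lesssim h_T^{-1}\|\boldsymbol{w}\|_T^2+h_T|\boldsymbol{w}|_{1,T}^2$ with $\boldsymbol{w}=(1-\Pi_h^s)\boldsymbol{\psi}_h^s$ and $T\in\mathcal{T}_h(\Omega^s)$ the element with $e\subset\partial T$, and then applying (\ref{eqn:Pi inequality}) with $\ell=0$ and $\ell=1$ (both at $m=1$), the two contributions become $h_T^{-1}\cdot h_T^2|\boldsymbol{\psi}_h^s|_{1,T}^2$ and $h_T\cdot|\boldsymbol{\psi}_h^s|_{1,T}^2$, so that $\|(1-\Pi_h^s)\boldsymbol{\psi}_h^s\|_e\lesssim h_T^{1/2}|\boldsymbol{\psi}_h^s|_{1,T}$. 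Summing over $e$ with a discrete Cauchy--Schwarz inequality, using shape-regularity ($h_T\approx h_e\leq h$), and noting that each interface face belongs to exactly one element so that $\sum_{e}|\boldsymbol{\psi}_h^s|_{1,T_e}^2\leq\|\nabla\boldsymbol{\psi}_h^s\|_s^2$, delivers (\ref{eqn:Du n 1-Pi}) after multiplication by $2\mu$.

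The orthogonality and Cauchy--Schwarz manipulations and the discrete summations are routine and I would not spell them out in full. The one step requiring genuine care is the trace estimate for the interface term: the $h_T^{-1}$ weight coming from the trace inequality must be exactly absorbed by the $O(h_T^2)$ factor from the $\ell=0$ approximation bound, and it is essential to use the $\ell=1$ bound for the gradient contribution, since otherwise the scaling would not close to the claimed single power $h^{1/2}$ (equivalently, the factor $h$ under the square root). I expect this $h$-bookkeeping, together with the verification that the per-face element seminorms sum to at most the global seminorm $\|\nabla\boldsymbol{\psi}_h^s\|_s^2$, to be the only place where a slip could occur.
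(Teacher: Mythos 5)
Your proposal is correct and follows essentially the same route as the paper's proof: orthogonality (\ref{eqn:Pi 1}) plus Cauchy--Schwarz and (\ref{eqn:Pi inequality}) with $\ell=0$, $m=1$ for the volume term, and orthogonality (\ref{eqn:Pi 2}) plus Cauchy--Schwarz, the elementwise trace inequality, and (\ref{eqn:Pi inequality}) with $\ell=0,1$ at $m=1$ for the interface term. Your $h$-bookkeeping ($h_T^{-1}\cdot h_T^2$ and $h_T\cdot 1$ combining to the single factor $h$ under the square root) matches the paper's estimate (\ref{eqn:Pi_h-I estimate}) exactly.
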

 \begin{proof}
For the first term, 
it is a direct conclusion from the orthogonality property (\ref{eqn:Pi 1}), Cauchy-Schwarz inequality, estimation (\ref{eqn:Pi inequality}).

For the second estimate, from the orthogonality property (\ref{eqn:Pi 2}) and Cauchy-Schwarz inequality, we have
 \begin{align*}
 &2\mu\langle D(\boldsymbol{u}^s)\boldsymbol{n}^s\cdot\boldsymbol{n}^s,
 (1-\Pi_h^s)\boldsymbol{\psi}_h^s\cdot\boldsymbol{n}^s\rangle_{\Gamma}\\
 \lesssim &\mu
 \sum_{e\in\mathcal{E}_h(\Gamma)}\inf_{q_h\in P_{k-1}(e)}
 \|D(\boldsymbol{u}^s)\boldsymbol{n}^s\cdot\boldsymbol{n}^s-q_h\|_e
 \|(1-\Pi_h^s)\boldsymbol{\psi}_h^s\cdot\boldsymbol{n}^s\|_e\\
  \lesssim &\mu
 \left(\sum_{e\in\mathcal{E}_h(\Gamma)}\inf_{q_h\in P_{k-1}(e)}
 \|D(\boldsymbol{u}^s)\boldsymbol{n}^s\cdot\boldsymbol{n}^s-q_h\|_e^2\right)^{1/2}
  \left(\sum_{e\in\mathcal{E}_h(\Gamma)}
 \|(1-\Pi_h^s)\boldsymbol{\psi}_h^s\cdot\boldsymbol{n}^s\|_e^2\right)^{1/2}.
 \end{align*}
 Using trace inequality and estimation (\ref{eqn:Pi inequality}), it can be obtained
 \begin{align}
 \label{eqn:Pi_h-I estimate}
 \begin{aligned}
 &\sum_{e\in\mathcal{E}_h(\Gamma)}\|(1-\Pi_h^s)\boldsymbol{\psi}_h^s\cdot\boldsymbol{n}^s\|_e^2\\
 \lesssim &
 \sum_{T\in\mathcal{T}_h(\Gamma^s)}h_T^{-1}\|(1-\Pi_h^s)\boldsymbol{\psi}_h^s\|_T^2
 +h_T\|\nabla(1-\Pi_h^s)\boldsymbol{\psi}_h^s\|_T^2\\
 \lesssim& h
 \|\nabla\boldsymbol{\psi}_h^s\|_s^2,
 \end{aligned}
 \end{align}
where 
$\mathcal{T}_h(\Gamma^s)=\{T\in\mathcal{T}_h(\Omega^s)~|~T\cap\Gamma\neq\emptyset\}$. 
The above two inequalities imply the second estimate and we complete the proof.
 \end{proof}

To obtain a fully pressure-robust discretization, the Stokes-Darcy equations needs to be modified as follows: Find $(\boldsymbol{u}_h,p_h)\in 
V_h\times Q_h$ solving 
\begin{align}
\label{eqn:fully pressure-robust discretization}
\begin{aligned}
 a(\boldsymbol{u}_h,\boldsymbol{\psi}_h)+b(\boldsymbol{\psi}_h,p_h)
&=(f,\Pi_h\boldsymbol{\psi}_h), && \forall
\boldsymbol{\psi}_h\in V_h,\\
b(\boldsymbol{\psi}_h,\phi_h)&=(g,\phi_h), && \forall \phi_h\in Q_h.
\end{aligned}
\end{align}
The analysis  is based on an improved pressure-related functional with reconstruction operator on $V_h$, which achieves exact divergence-free and  continuity across $\Gamma$. This functional is defined by
\begin{align}
\label{eqn:consistency error functional of pressure robust}
\aleph_p(\boldsymbol{\psi}_h)=b(\Pi_h\boldsymbol{\psi}_h,p)
-\langle\Pi_h^s\boldsymbol{\psi}_h^s\cdot\boldsymbol{n}^s
+\Pi_h^d\boldsymbol{\psi}_h^d\cdot\boldsymbol{n}^d,p^d\rangle_{\Gamma}
, \quad \forall \boldsymbol{\psi}_h\in V_h,
\end{align}
related to $p^d\in H^1(\Omega^d)$. This operator was first introduced in our previous work~\cite{Zhang2025} for the Stokes-Darcy optimal control problem as a functional measuring the violation of the discrete divergence constraint and interface normal continuity, and we adopt the same construction here for the coupled Stokes-Darcy problem. Lemma~\ref{lem:Pi} implies $\aleph_p(\boldsymbol{\psi}_h)=0$ for any $\boldsymbol{\psi}_h\in V_h(0)$ and any $p\in Q$ with $p^d\in H^1(\Omega^d)$, which leads to the discrete format being pressure-independent.

\begin{lemma}\label{lem:error uh-Shu}
If the exact solutions of (\ref{eqn:state weak 1}) and (\ref{eqn:state weak 2}) have the regularity
$\boldsymbol{u}^s\in [H^2(\Omega^s)]^N$, and $p^i\in H^1(\Omega^i), i=s,d$,
it holds
\begin{align}
\label{eqn:error estmate pressure robust}
\begin{aligned}
\|S_h\boldsymbol{u}-\boldsymbol{u}_h\|_X^2
\lesssim &h^2\sum_{T\in\mathcal{T}_h(\Omega^s)}\inf_{\boldsymbol{\varphi}_h^s\in [P_{k-2}(T)]^N}\|\nabla\cdot D(\boldsymbol{u}^s)-\boldsymbol{\varphi}_h^s\|_T^2\\
&+h\sum_{e\in\mathcal{E}_h(\Gamma)}\inf_{q_h\in P_{k-1}(e)}
 \|D(\boldsymbol{u}^s)\boldsymbol{n}^s\cdot\boldsymbol{n}^s-q_h\|_e^2,
\end{aligned}
\end{align}
for the solution $\boldsymbol{u}_h$ of (\ref{eqn:fully pressure-robust discretization}) and the discrete approximation $S_h\boldsymbol{u}$ of the 
exact solutions of (\ref{eqn:state weak 1}) and (\ref{eqn:state weak 2}).
\end{lemma}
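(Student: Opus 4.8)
The plan is to follow the same template as the proof of Lemma~\ref{lem:H estimate}, replacing the consistency functional $\vartheta_p$ by its reconstructed counterpart $\aleph_p$ and exploiting the exact divergence-free and interface-continuity properties granted by Lemma~\ref{lem:Pi}. First I would subtract the second equations of (\ref{eqn:fully pressure-robust discretization}) and (\ref{eqn:Sh definition}) to obtain $b(\boldsymbol{u}_h-S_h\boldsymbol{u},\phi_h)=0$ for all $\phi_h\in Q_h$, so that $\boldsymbol{u}_h-S_h\boldsymbol{u}\in V_h(0)$; subtracting the first equations then yields the error identity
\begin{align*}
a(\boldsymbol{u}_h-S_h\boldsymbol{u},\boldsymbol{\psi}_h)+b(\boldsymbol{\psi}_h,p_h-\delta_h)=\widetilde{\mathcal{G}}(\boldsymbol{\psi}_h):=(\boldsymbol{f},\Pi_h\boldsymbol{\psi}_h)-a(\boldsymbol{u},\boldsymbol{\psi}_h),\quad\boldsymbol{\psi}_h\in V_h.
\end{align*}
Testing with $\boldsymbol{\psi}_h=\boldsymbol{u}_h-S_h\boldsymbol{u}\in V_h(0)$ annihilates the $b$-term (as $p_h-\delta_h\in Q_h$), leaving $a(\boldsymbol{u}_h-S_h\boldsymbol{u},\boldsymbol{u}_h-S_h\boldsymbol{u})=\widetilde{\mathcal{G}}(\boldsymbol{u}_h-S_h\boldsymbol{u})$. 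Because functions in $V_h(0)$ are divergence-free in $\Omega^d$, the coercivity of $a$ (Lemma~3.1 of \cite{Layton2002}) gives $\mu\|\boldsymbol{u}_h-S_h\boldsymbol{u}\|_X^2\lesssim\widetilde{\mathcal{G}}(\boldsymbol{u}_h-S_h\boldsymbol{u})$, so everything reduces to bounding $\widetilde{\mathcal{G}}$ on $V_h(0)$.

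The heart of the argument is to show that, for $\boldsymbol{\psi}_h\in V_h(0)$, all pressure contributions cancel and
\begin{align*}
\widetilde{\mathcal{G}}(\boldsymbol{\psi}_h)=2\mu(\nabla\cdot D(\boldsymbol{u}^s),(1-\Pi_h^s)\boldsymbol{\psi}_h^s)_s-2\mu\langle D(\boldsymbol{u}^s)\boldsymbol{n}^s\cdot\boldsymbol{n}^s,(1-\Pi_h^s)\boldsymbol{\psi}_h^s\cdot\boldsymbol{n}^s\rangle_{\Gamma}.
\end{align*}
To prove this I would insert the strong forms (\ref{eqn:state Stokes}) and (\ref{eqn:state Darcy}) into $(\boldsymbol{f},\Pi_h\boldsymbol{\psi}_h)$ and integrate the pressure gradients by parts on each subdomain. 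By Lemma~\ref{lem:Pi}, $\Pi_h\boldsymbol{\psi}_h\in\Theta_h\cap\Theta_d\cap\Theta_b$, so $\nabla\cdot\Pi_h^i\boldsymbol{\psi}_h^i=0$ and the normal traces of $\Pi_h^i\boldsymbol{\psi}_h^i$ vanish on $\Gamma^i$; hence the volume pressure terms and the outer-boundary terms drop, leaving only $\langle p^s,\Pi_h^s\boldsymbol{\psi}_h^s\cdot\boldsymbol{n}^s\rangle_\Gamma+\langle p^d,\Pi_h^d\boldsymbol{\psi}_h^d\cdot\boldsymbol{n}^d\rangle_\Gamma$. Using the normal-force balance (\ref{eqn:state boundary 2}) to replace $p^s$ by $p^d+2\mu D(\boldsymbol{u}^s)\boldsymbol{n}^s\cdot\boldsymbol{n}^s$, the $p^d$ part collapses into $\langle p^d,\Pi_h^s\boldsymbol{\psi}_h^s\cdot\boldsymbol{n}^s+\Pi_h^d\boldsymbol{\psi}_h^d\cdot\boldsymbol{n}^d\rangle_\Gamma$, which vanishes by the interface normal continuity built into $\Theta_b$. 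For $a(\boldsymbol{u},\boldsymbol{\psi}_h)$ I would reuse verbatim the integration-by-parts identities (\ref{eqn:u and psi_h})--(\ref{eqn:the fourth summand}) from Lemma~\ref{lem:H estimate}, reducing it to $-2\mu(\nabla\cdot D(\boldsymbol{u}^s),\boldsymbol{\psi}_h^s)_s+2\mu\langle D(\boldsymbol{u}^s)\boldsymbol{n}^s\cdot\boldsymbol{n}^s,\boldsymbol{\psi}_h^s\cdot\boldsymbol{n}^s\rangle_\Gamma+\mu(K^{-1}\boldsymbol{u}^d,\boldsymbol{\psi}_h^d)_d$. Finally, since $V_h^d=\Theta_h^d$ the reconstruction is the identity there, $\Pi_h^d\boldsymbol{\psi}_h^d=\boldsymbol{\psi}_h^d$, so the two Darcy terms $\mu(K^{-1}\boldsymbol{u}^d,\cdot)_d$ cancel exactly and the displayed formula for $\widetilde{\mathcal{G}}$ remains.

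With this identity, the two summands of $\widetilde{\mathcal{G}}(\boldsymbol{u}_h-S_h\boldsymbol{u})$ are (up to sign) precisely the left-hand sides bounded in (\ref{eqn:div 1-Pi}) and (\ref{eqn:Du n 1-Pi}) with $\boldsymbol{\psi}_h^s=(\boldsymbol{u}_h-S_h\boldsymbol{u})^s$. Applying those estimates and using $\|\nabla(\boldsymbol{u}_h-S_h\boldsymbol{u})^s\|_s\le\|\boldsymbol{u}_h-S_h\boldsymbol{u}\|_X$, I would get $\widetilde{\mathcal{G}}(\boldsymbol{u}_h-S_h\boldsymbol{u})\lesssim\mu\,R^{1/2}\|\boldsymbol{u}_h-S_h\boldsymbol{u}\|_X$, where $R$ denotes the right-hand side of (\ref{eqn:error estmate pressure robust}). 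Combining with the coercivity lower bound cancels one factor of $\mu$ and one factor of $\|\boldsymbol{u}_h-S_h\boldsymbol{u}\|_X$, and squaring yields exactly (\ref{eqn:error estmate pressure robust}); notably $\mu$ drops out, reflecting pressure-robustness.

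The step I expect to be the main obstacle is the exact cancellation of the pressure in $\widetilde{\mathcal{G}}$: it requires simultaneously the three structural properties packaged in Lemma~\ref{lem:Pi} (pointwise divergence-freeness, vanishing outer normal traces, and interface normal continuity of $\Pi_h\boldsymbol{\psi}_h$) together with the normal-force balance (\ref{eqn:state boundary 2}) to trade $p^s$ for $p^d$ on $\Gamma$. Matching up all boundary and interface contributions — in particular recognizing that $\Pi_h^d$ reduces to the identity on $V_h^d=\Theta_h^d$, so that the Darcy terms cancel rather than leaving an $\mathcal{O}(\mu)$ residue — is where the coupling makes the computation more delicate than in the single-domain Stokes setting of \cite{Linke2016}.
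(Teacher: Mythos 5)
Your proposal is correct and follows essentially the same route as the paper's own proof: the same error equation with $\mathcal{F}(\boldsymbol{\psi}_h)=(\boldsymbol{f},\Pi_h\boldsymbol{\psi}_h)-a(\boldsymbol{u},\boldsymbol{\psi}_h)$, the same pressure cancellation via Lemma~\ref{lem:Pi} (the paper packages it as $\aleph_p(\boldsymbol{\psi}_h)=0$ on $V_h(0)$), the same observation that $\Pi_h^d$ is the identity on $V_h^d=\Theta_h^d$, and the same final application of (\ref{eqn:div 1-Pi})--(\ref{eqn:Du n 1-Pi}) with coercivity of $a(\cdot,\cdot)$ on $V_h(0)$ to cancel the factor $\mu$ (a step the paper leaves implicit). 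Your sign on the volume term $2\mu(\nabla\cdot D(\boldsymbol{u}^s),(1-\Pi_h^s)\boldsymbol{\psi}_h^s)_s$ is in fact the correct one — the paper's displayed identity (\ref{eqn:F functional}) carries a harmless sign typo there — and since both terms are bounded in absolute value, this has no effect on the estimate.
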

\begin{proof}
From (\ref{eqn:fully pressure-robust discretization}) and (\ref{eqn:Sh definition}), it is easy to get
\begin{align}\label{eqn:error eqn uh-Shu}
a(\boldsymbol{u}_h-S_h\boldsymbol{u},\boldsymbol{\psi}_h)
+b(\boldsymbol{\psi}_h,p_h-\delta_h)
&=\mathcal{F}(\boldsymbol{\psi}_h),\quad \boldsymbol{\psi}_h\in V_h,
\end{align}
where
\begin{align*}
\mathcal{F}(\boldsymbol{\psi}_h)
=(\boldsymbol{f},\Pi_h\boldsymbol{\psi}_h)
-a(\boldsymbol{u},\boldsymbol{\psi}_h),\quad \boldsymbol{\psi}_h\in V_h.
\end{align*} 

Since $\boldsymbol{u}^s\in [H^2(\Omega^s)]^N$ and $p^i\in H^1(\Omega^i), i=s,d$, the solutions $(\boldsymbol{u},p)$ of (\ref{eqn:state weak 1}) and (\ref{eqn:state weak 2}) satisfy (\ref{eqn:state Stokes})$\sim$(\ref{eqn:state boundary 3}).
For any $\boldsymbol{\psi}_h\in V_h$, 
combining (\ref{eqn:state Stokes}) and (\ref{eqn:state Darcy}), we have
\begin{align}
\label{eqn:f equality}
\begin{aligned}
(\boldsymbol{f},\Pi_h\boldsymbol{\psi}_h)
=&-(2\mu\nabla\cdot D(\boldsymbol{u}^s),\Pi_h^s\boldsymbol{\psi}_h^s)_s
+\mu(K^{-1}\boldsymbol{u}^d,\Pi_h^d\boldsymbol{\psi}_h^d)_d\\
&+(\nabla p^s,\Pi_h^s\boldsymbol{\psi}_h^s)_s
+(\nabla p^d,\Pi_h^d\boldsymbol{\psi}_h^d)_d.
\end{aligned}
\end{align}
For the last two terms in above equation, using integration by parts, (\ref{eqn:state boundary 2}), and 
(\ref{eqn:consistency error functional of pressure robust}), it can be obtained
\begin{align}
\label{eqn: ps pd and Pi}
\begin{aligned}
&(\nabla p^s,\Pi_h^s\boldsymbol{\psi}_h^s)_s
+(\nabla p^d,\Pi_h^d\boldsymbol{\psi}_h^d)_d\\
=&b(\Pi_h\boldsymbol{\psi}_h,p)
+\langle p^s,\Pi_h^s\boldsymbol{\psi}_h^s\cdot\boldsymbol{n}^s\rangle_{\Gamma}
+\langle p^d,\Pi_h^s\boldsymbol{\psi}_h^s\cdot\boldsymbol{n}^d\rangle_{\Gamma}\\
=&b(\Pi_h\boldsymbol{\psi}_h,p)
+\langle 2\mu D(\boldsymbol{u}^s)\boldsymbol{n}^s\cdot\boldsymbol{n}^s,\Pi_h^s\boldsymbol{\psi}_h^s\cdot\boldsymbol{n}^s\rangle_{\Gamma}
+\langle p^d,\Pi_h^s\boldsymbol{\psi}_h^s\cdot\boldsymbol{n}^s+\Pi_h^d\boldsymbol{\psi}_h^d\cdot\boldsymbol{n}^d\rangle_{\Gamma}\\
=&\aleph_p(\boldsymbol{\psi}_h)
+\langle 2\mu D(\boldsymbol{u}^s)\boldsymbol{n}^s\cdot\boldsymbol{n}^s,
\Pi_h^s\boldsymbol{\psi}_h^s\cdot\boldsymbol{n}^s\rangle_{\Gamma}.
\end{aligned}
\end{align}
And from (\ref{eqn:u and psi_h}), (\ref{eqn:the first summand}), (\ref{eqn:the third summand}), and (\ref{eqn:the fourth summand}), it holds that
\begin{align}
\label{eqn:a equality}
a(\boldsymbol{u},\boldsymbol{\psi}_h)=-2\mu(\nabla\cdot D(\boldsymbol{u}^s),\boldsymbol{\psi}_h^s)_s+\mu(K^{-1}\boldsymbol{u}^d,\boldsymbol{\psi}_h^d)_d+2\mu\langle D(\boldsymbol{u}^s)\boldsymbol{n}^s\cdot\boldsymbol{n}^s
,\boldsymbol{\psi}_h^s\cdot\boldsymbol{n}^s\rangle_{\Gamma}.
\end{align}

Due to $\Theta_h^d=V_h^d$, then $\Pi_h^d$ is an identity operator for $\boldsymbol{\psi}_h^d\in V_h^d$, i.e. $(1-\Pi_h^d)\boldsymbol{\psi}_h^d=0$. Then, combining this identity with (\ref{eqn:f equality}), (\ref{eqn: ps pd and Pi}), and (\ref{eqn:a equality}), we get
\begin{align}\label{eqn:F functional}
\begin{aligned}
\mathcal{F}(\boldsymbol{\psi}_h)
=&\aleph_p(\boldsymbol{\psi}_h)+(2\mu\nabla\cdot D(\boldsymbol{u}^s),\Pi_h^s\boldsymbol{\psi}_h^s-\boldsymbol{\psi}_h^s)_s\\
&+\langle 2\mu D(\boldsymbol{u}^s)\boldsymbol{n}^s\cdot\boldsymbol{n}^s,
(\Pi_h^s\boldsymbol{\psi}_h^s-\boldsymbol{\psi}_h^s)
\cdot\boldsymbol{n}^s\rangle_{\Gamma},
\end{aligned}
\end{align}
where $\aleph_p(\boldsymbol{\psi}_h)$ is the pressure-related term. 
Set $\boldsymbol{\psi}_h=\boldsymbol{u}_h-S_h\boldsymbol{u}\in V_h(0)$ in (\ref{eqn:error eqn uh-Shu}), from Lemma~\ref{lem:Pi}, it can be obtained $\aleph_p(\boldsymbol{u}_h-S_h\boldsymbol{u})=0$ and $b(\boldsymbol{u}_h-S_h\boldsymbol{u},p_h-\delta_h)=0$. Then, (\ref{eqn:F functional}), (\ref{eqn:div 1-Pi}), and (\ref{eqn:Du n 1-Pi}) imply the conclusion. 
\end{proof}

\begin{theorem}\label{the:pressure-robust}
If the exact solutions of (\ref{eqn:state weak 1}) and (\ref{eqn:state weak 2}) have the regularity
$(\boldsymbol{u}^s,\boldsymbol{u}^d)\in [H^r(\Omega^s)]^N\times [H^{r-1}(\Omega^d)]^N$, $\nabla\cdot \boldsymbol{u}^d\in H^{r-1}(\Omega^d)$, and $p^i\in H^{r-1}(\Omega^i), i=s,d$ for any $k\leq r\leq k+1$ with $k\geq 2$,
it holds
\begin{align}\label{eqn:error estiamte regular for pressure robust}
\|\boldsymbol{u}-\boldsymbol{u}_h\|_X^2
\lesssim &h^{2(r-1)}(\|\boldsymbol{u}^s\|_{r,s}^2
+\|\boldsymbol{u}^d\|_{r-1,d}^2+\|\nabla\cdot\boldsymbol{u}^d\|_{r-1,d}^2),
\end{align}
and
\begin{align}\label{eqn:error estimate of p regular pressure depend for robust method}
\|p-p_h\|^2
\lesssim &h^{2(r-1)}(\mu^2\|\boldsymbol{u}^s\|_{r,s}^2+\mu^2\|\boldsymbol{u}^d\|_{r-1,d}^2+\mu^2\|\nabla\cdot\boldsymbol{u}^d\|_{r-1,d}^2
+\|p^s\|_{r-1,s}^2+\|p^d\|_{r-1,d}^2),
\end{align}
for the solution $(\boldsymbol{u}_h,p_h)$ of (\ref{eqn:fully pressure-robust discretization}).
\end{theorem}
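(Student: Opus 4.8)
The plan is to treat the velocity bound \eqref{eqn:error estiamte regular for pressure robust} and the pressure bound \eqref{eqn:error estimate of p regular pressure depend for robust method} separately, reusing the projector $S_h\boldsymbol{u}$ exactly as in Theorem~\ref{the:classical estimate}. For the velocity I would begin from the triangle inequality $\|\boldsymbol{u}-\boldsymbol{u}_h\|_X^2\lesssim\|\boldsymbol{u}-S_h\boldsymbol{u}\|_X^2+\|S_h\boldsymbol{u}-\boldsymbol{u}_h\|_X^2$. The first summand is controlled by the quasi-optimality \eqref{eqn:error u and Sh u} together with the interpolation estimate \eqref{eqn:approximation error u and z}, which already produces the factor $h^{2(r-1)}(\|\boldsymbol{u}^s\|_{r,s}^2+\|\boldsymbol{u}^d\|_{r-1,d}^2+\|\nabla\cdot\boldsymbol{u}^d\|_{r-1,d}^2)$. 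The second summand is precisely the quantity bounded in Lemma~\ref{lem:error uh-Shu}, so the remaining work is purely to convert the two best-approximation residuals on the right-hand side of \eqref{eqn:error estmate pressure robust} into powers of $h$.

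For those residuals I would argue as in \eqref{eqn:cosistency error p}. Since $\boldsymbol{u}^s\in[H^r(\Omega^s)]^N$ with $k\le r\le k+1$, one has $\nabla\cdot D(\boldsymbol{u}^s)\in[H^{r-2}(\Omega^s)]^N$, and the local $L^2$-best approximation by $[P_{k-2}(T)]^N$ gives $\inf_{\boldsymbol{\varphi}_h^s}\|\nabla\cdot D(\boldsymbol{u}^s)-\boldsymbol{\varphi}_h^s\|_T\lesssim h_T^{r-2}\|\boldsymbol{u}^s\|_{r,T}$, so that $h^2\sum_T(\cdots)\lesssim h^{2(r-1)}\|\boldsymbol{u}^s\|_{r,s}^2$. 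For the interface residual I would use the trace inequality (the same mechanism as in the second line of \eqref{eqn:cosistency error p}) to pass from an edge $e$ to the adjacent element $T$, combine it with the $P_{k-1}$-approximation of $D(\boldsymbol{u}^s)\in[H^{r-1}(\Omega^s)]^{N\times N}$, and obtain $h\sum_e(\cdots)\lesssim h^{2(r-1)}\|\boldsymbol{u}^s\|_{r,s}^2$. Adding the three contributions yields \eqref{eqn:error estiamte regular for pressure robust}, now free of any $\mu$- or pressure-dependence.

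For the pressure I would invoke the discrete inf-sup stability of $(V_h,Q_h)$ applied to $P_h p-p_h$, where $P_h p\in Q_h$ is the local $L^2$ projection of $p$, so that $\|P_h p-p_h\|\lesssim\sup_{\boldsymbol{\psi}_h\in V_h}b(\boldsymbol{\psi}_h,P_h p-p_h)/\|\boldsymbol{\psi}_h\|_X$. Using the first line of \eqref{eqn:fully pressure-robust discretization} to eliminate $p_h$ and the definition $\mathcal{F}(\boldsymbol{\psi}_h)=(\boldsymbol{f},\Pi_h\boldsymbol{\psi}_h)-a(\boldsymbol{u},\boldsymbol{\psi}_h)$, I would rewrite $b(\boldsymbol{\psi}_h,P_h p-p_h)=b(\boldsymbol{\psi}_h,P_h p)-a(\boldsymbol{u}-\boldsymbol{u}_h,\boldsymbol{\psi}_h)-\mathcal{F}(\boldsymbol{\psi}_h)$ and then substitute the explicit decomposition \eqref{eqn:F functional}, namely $\mathcal{F}=\aleph_p+R$ with $R$ the reconstruction residual controlled by \eqref{eqn:div 1-Pi}--\eqref{eqn:Du n 1-Pi}. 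The decisive simplifications come from Lemma~\ref{lem:Pi}: because $\Pi_h\boldsymbol{\psi}_h\in\Theta_b$ for every $\boldsymbol{\psi}_h\in V_h$, the interface term of $\aleph_p$ vanishes, leaving $\aleph_p(\boldsymbol{\psi}_h)=b(\Pi_h\boldsymbol{\psi}_h,p)$; and the commuting relation that $\nabla\cdot\Pi_h^i\boldsymbol{\psi}_h^i$ equals the $L^2$ projection of $\nabla\cdot\boldsymbol{\psi}_h^i$ onto $P_{k-1}$ (a direct consequence of \eqref{eqn:Pi 1}--\eqref{eqn:Pi 2}) gives $b(\boldsymbol{\psi}_h,P_h p)=b(\Pi_h\boldsymbol{\psi}_h,P_h p)$. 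Hence $b(\boldsymbol{\psi}_h,P_h p)-\aleph_p(\boldsymbol{\psi}_h)=b(\Pi_h\boldsymbol{\psi}_h,P_h p-p)$, which is bounded by $\|P_h p-p\|\,\|\boldsymbol{\psi}_h\|_X$ through $\|\nabla\cdot\Pi_h^i\boldsymbol{\psi}_h^i\|_i\le\|\nabla\cdot\boldsymbol{\psi}_h^i\|_i$.

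Collecting the three pieces gives $\|P_h p-p_h\|\lesssim\|P_h p-p\|+\mu\|\boldsymbol{u}-\boldsymbol{u}_h\|_X+\mu h^{r-1}\|\boldsymbol{u}^s\|_{r,s}$, where the boundedness $|a(\boldsymbol{u}-\boldsymbol{u}_h,\boldsymbol{\psi}_h)|\lesssim\mu\|\boldsymbol{u}-\boldsymbol{u}_h\|_X\|\boldsymbol{\psi}_h\|_X$ supplies the velocity factor and \eqref{eqn:div 1-Pi}--\eqref{eqn:Du n 1-Pi} together with the residual estimates already derived supply the $\mu h^{r-1}\|\boldsymbol{u}^s\|_{r,s}$ term. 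A triangle inequality, the $P_h$-approximation bound $\|P_h p-p\|\lesssim h^{r-1}(\|p^s\|_{r-1,s}+\|p^d\|_{r-1,d})$, and the velocity estimate \eqref{eqn:error estiamte regular for pressure robust} then yield \eqref{eqn:error estimate of p regular pressure depend for robust method}. I expect the main obstacle to be the pressure argument rather than the velocity one: the care lies in tracking the reconstruction operator through $b(\boldsymbol{\psi}_h,P_h p-p_h)$ and in recognizing that both the interface coupling (via $\Theta_b$) and the divergence mismatch (via the commuting projection) cancel exactly on all of $V_h$, not merely on $V_h(0)$, so that only the $\mu$-weighted consistency residual survives.
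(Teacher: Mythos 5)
Your proposal is correct, and the velocity half coincides with the paper's own proof: triangle inequality through $S_h\boldsymbol{u}$, quasi-optimality \eqref{eqn:error u and Sh u} with \eqref{eqn:approximation error u and z}, Lemma~\ref{lem:error uh-Shu}, and then exactly the paper's conversions of the two residuals — compare \eqref{eqn:first two term estimate} (local $P_{k-2}$ best approximation of $\nabla\cdot D(\boldsymbol{u}^s)$) and \eqref{eqn:third and fourth term estimate} (trace inequality plus $P_{k-1}$ projection of $D(\boldsymbol{u}^s)$). The pressure half follows the paper's skeleton (inf-sup on $P_hp-p_h$, the identity $b(\boldsymbol{\psi}_h,P_hp-p_h)=b(\boldsymbol{\psi}_h,P_hp)-a(\boldsymbol{u}-\boldsymbol{u}_h,\boldsymbol{\psi}_h)-\mathcal{F}(\boldsymbol{\psi}_h)$, the decomposition \eqref{eqn:F functional} with $\aleph_p(\boldsymbol{\psi}_h)=b(\Pi_h\boldsymbol{\psi}_h,p)$ on all of $V_h$ via Lemma~\ref{lem:Pi}, and the $\mu$-weighted residuals \eqref{eqn:div 1-Pi}--\eqref{eqn:Du n 1-Pi}), but at one step you take a genuinely different and cleaner route: where the paper splits the pressure-consistency contribution as $b(\boldsymbol{\psi}_h,P_hp-p)+b(\boldsymbol{\psi}_h-\Pi_h\boldsymbol{\psi}_h,p)$ and estimates the second piece by integration by parts, the orthogonalities \eqref{eqn:Pi 1}--\eqref{eqn:Pi 2}, and trace/interface approximation arguments (see \eqref{eqn:third term pressure p} and \eqref{eqn:fourth term pressure p}), you instead invoke the commuting-diagram property of the Raviart--Thomas interpolant — that $\nabla\cdot\Pi_h^i\boldsymbol{\psi}_h^i$ is the elementwise $L^2$ projection of $\nabla\cdot\boldsymbol{\psi}_h^i$ onto $P_{k-1}$, which indeed follows from \eqref{eqn:Pi 1}--\eqref{eqn:Pi 2} by a one-line integration by parts since $\nabla q_h\in[P_{k-2}(T)]^N$ and $q_h|_e\in P_{k-1}(e)$ — to collapse both pieces into $b(\Pi_h\boldsymbol{\psi}_h,P_hp-p)$. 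This buys a real simplification: your term is in fact \emph{exactly zero}, not merely bounded by $\|P_hp-p\|\,\|\boldsymbol{\psi}_h\|_X$, because $\nabla\cdot\Pi_h\boldsymbol{\psi}_h$ is piecewise $P_{k-1}$ and $P_hp$ is the local $L^2$ projection, so the whole estimate \eqref{eqn:fourth term pressure p} becomes unnecessary and the $\|p^s\|_{r-1,s}$, $\|p^d\|_{r-1,d}$ dependence enters only through the final triangle inequality $\|p-P_hp\|$; the paper's route, by contrast, keeps the reconstruction error visible term by term at the cost of the extra trace-inequality machinery. Both arguments deliver the stated bound \eqref{eqn:error estimate of p regular pressure depend for robust method}, and your observation that the cancellations via $\Theta_b$ and the commuting projection hold on all of $V_h$ (not just $V_h(0)$) is precisely the point the paper exploits implicitly.
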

\begin{proof}
Under the regularity $(\boldsymbol{u}^s,\boldsymbol{u}^d)\in [H^r(\Omega^s)]^N\times [H^{r-1}(\Omega^d)]^N$ and $\nabla\cdot \boldsymbol{u}^d\in H^{r-1}(\Omega^d)$ for any $k\leq r \leq k+1$ with $k\geq 2$, for the first term in (\ref{eqn:error estmate pressure robust}), from projection property, we have
\begin{align}\label{eqn:first two term estimate}
\begin{aligned}
h^2\sum_{T\in\mathcal{T}_h(\Omega^s)}\inf_{\boldsymbol{\varphi}_h^s\in [P_{k-2}(T)]^N}\|\nabla\cdot D(\boldsymbol{u}^s)-\boldsymbol{\varphi}_h^s\|_T^2
\lesssim h^{2(r-1)}\|\boldsymbol{u}^s\|_{r,s}^2.
\end{aligned}
\end{align}
For the second term in (\ref{eqn:error estmate pressure robust}), let $\boldsymbol{\Psi}_h^s$ to be the local $L^2$ projection of $D(\boldsymbol{u}^s)$ into $\{\boldsymbol{\Phi}_h\in [L^2(\Omega^s)]^{N\times N}~|~\boldsymbol{\Phi}_{h|T}\in [P_{k-1}(T)]^{N\times N},~T\in \mathcal{T}_h(\Omega^s)\}$. From trace inequality and projection property, it is obtained
\begin{align}\label{eqn:third and fourth term estimate}
\begin{aligned}
&h\sum_{e\in\mathcal{E}_h(\Gamma)}\inf_{q_h\in P_{k-1}(e)}
\|D(\boldsymbol{u}^s)\boldsymbol{n}^s\cdot\boldsymbol{n}^s-q_h\|_e^2\\
\lesssim &h\sum_{e\in\mathcal{E}_h(\Gamma)}
\|D(\boldsymbol{u}^s)\boldsymbol{n}^s\cdot\boldsymbol{n}^s-\boldsymbol{\Psi}_h^s\boldsymbol{n}^s\cdot\boldsymbol{n}^s\|_e^2\\
\lesssim &h\sum_{T\in\mathcal{T}_h(\Gamma^s)}
h_T^{-1}\|D(\boldsymbol{u}^s)-\boldsymbol{\Psi}_h^s\|_T^2+h_T|D(\boldsymbol{u}^s)-\boldsymbol{\Psi}_h^s|_{1,T}^2\\
\lesssim &h^{2(r-1)}\|\boldsymbol{u}^s\|_{r,s}^2,
\end{aligned}
\end{align}
where $\mathcal{T}_h(\Gamma^s)$ is defined in (\ref{eqn:Pi_h-I estimate}). From the triangle inequality, Lemma~\ref{lem:error uh-Shu}, (\ref{eqn:approximation error u and z}), (\ref{eqn:error estmate pressure robust}), (\ref{eqn:first two term estimate}), and (\ref{eqn:third and fourth term estimate}), we get (\ref{eqn:error estiamte regular for pressure robust}).

Next, we will show (\ref{eqn:error estimate of p regular pressure depend for robust method}) with the regularity
$(\boldsymbol{u}^s,\boldsymbol{u}^d)\in [H^r(\Omega^s)]^N\times [H^{r-1}(\Omega^d)]^N$, $\nabla\cdot \boldsymbol{u}^d\in H^{r-1}(\Omega^d)$, and $p^i\in H^{r-1}(\Omega^i), i=s,d$ for any $k\leq r\leq k+1$ with $k\geq 2$. For any $\boldsymbol{\psi}_h\in V_h$, 
with the help of (\ref{eqn:F functional}), noting $(1-\Pi_h^d)\boldsymbol{\psi}_h^d=0$ and $\aleph_p(\boldsymbol{\psi}_h)=b(\Pi_h\boldsymbol{\psi}_h,p)$ (from Lemma~\ref{lem:Pi}, $\langle\Pi_h^s\boldsymbol{\psi}_h^s\cdot\boldsymbol{n}^s
+\Pi_h^d\boldsymbol{\psi}_h^d\cdot\boldsymbol{n}^d,p^d\rangle_{\Gamma}=0$), it can be obtained
\begin{align}
\label{eqn:b Ph p-p}
\begin{aligned}
&b(\boldsymbol{\psi}_h,P_h p-p_h)\\
=&b(\boldsymbol{\psi}_h,P_h p)+a(\boldsymbol{u}_h,\boldsymbol{\psi}_h)-(\boldsymbol{f},\Pi_h\boldsymbol{\psi}_h)\\
=&a(\boldsymbol{u},\boldsymbol{\psi}_h)+b(\boldsymbol{\psi}_h,P_h p)-(\boldsymbol{f},\Pi_h\boldsymbol{\psi}_h)-a(\boldsymbol{u}-\boldsymbol{u}_h,\boldsymbol{\psi}_h)
\\
=&(2\mu\nabla\cdot D(\boldsymbol{u}^s),\Pi_h^s\boldsymbol{\psi}_h^s-\boldsymbol{\psi}_h^s)_s
+\langle 2\mu D(\boldsymbol{u}^s)\boldsymbol{n}^s\cdot\boldsymbol{n}^s,(\boldsymbol{\psi}_h^s-\Pi_h^s\boldsymbol{\psi}_h^s)\cdot\boldsymbol{n}^s\rangle_{\Gamma}\\
&+b(\boldsymbol{\psi}_h,P_h p-p)+b(\boldsymbol{\psi}_h-\Pi_h\boldsymbol{\psi}_h,p)-a(\boldsymbol{u}-\boldsymbol{u}_h,\boldsymbol{\psi}_h),
\end{aligned}
\end{align}
where $P_h$ is defined in (\ref{eqn:b operator equality}). 
Using the projection property and noting $\nabla\cdot\boldsymbol{\psi}_h^d\in Q_h$, the third term in (\ref{eqn:b Ph p-p}) holds
\begin{align}
\label{eqn:third term pressure p}
\begin{aligned}
b(\boldsymbol{\psi}_h,P_h p-p)=&(\nabla\cdot\boldsymbol{\psi}_h^s,(P_h p-p)^s)_s
+(\nabla\cdot\boldsymbol{\psi}_h^d,(P_h p-p)^d)_d\\
\lesssim& 
h^{r-1}\|p^s\|_{r-1,s}\|\nabla\boldsymbol{\psi}_h\|_X.
\end{aligned}
\end{align}
For the fourth term in (\ref{eqn:b Ph p-p}), with the help of the orthogonality property (\ref{eqn:Pi 1}) and (\ref{eqn:Pi 2}), the estimations (\ref{eqn:Pi inequality}) and  (\ref{eqn:Pi_h-I estimate}), a discussion similar to (\ref{eqn:cosistency error p}) for $\inf_{\phi_h\in\Lambda_h}\|p^s-\phi_h\|_{\Gamma}$, and the projection property, it can be obtained
\begin{align}
\label{eqn:fourth term pressure p}
\begin{aligned}
&b(\boldsymbol{\psi}_h-\Pi_h\boldsymbol{\psi}_h,p)\\
=&(\nabla\cdot(\boldsymbol{\psi}_h^s-\Pi_h\boldsymbol{\psi}_h^s),p^s)_s+
(\nabla\cdot(\boldsymbol{\psi}_h^d-\Pi_h\boldsymbol{\psi}_h^d),p^d)_d\\
=&(\nabla\cdot(\boldsymbol{\psi}_h^s-\Pi_h\boldsymbol{\psi}_h^s),p^s)_s\\
=&-(\boldsymbol{\psi}_h^s-\Pi_h\boldsymbol{\psi}_h^s,\nabla p^s)_s+\langle (\boldsymbol{\psi}_h^s-\Pi_h\boldsymbol{\psi}_h^s)\cdot\boldsymbol{n}^s,p^s\rangle_{\Gamma}\\
\lesssim &\|\boldsymbol{\psi}_h^s-\Pi_h\boldsymbol{\psi}_h^s\|_s\sum_{T\in\mathcal{T}_h(\Omega^s)}\inf_{\Phi_h\in [P_{k-2}(T)]^N}\|\nabla p^s-\Phi_h\|_T\\
&+\sum_{e\in\mathcal{E}_h(\Gamma)}\|\boldsymbol{\psi}_h^s-\Pi_h\boldsymbol{\psi}_h^s)\|_{e}\inf_{\phi_h\in P_{k-1}(e)}\|p^s-\phi_h\|_e\\
\lesssim & h^{r-1}\|p^s\|_{r-1,s}\|\nabla\boldsymbol{\phi}_h^s\|_s.
\end{aligned}
\end{align}
Combining (\ref{eqn:b Ph p-p}), (\ref{eqn:div 1-Pi}), (\ref{eqn:Du n 1-Pi}), (\ref{eqn:first two term estimate}), (\ref{eqn:third and fourth term estimate})
(\ref{eqn:third term pressure p}), (\ref{eqn:fourth term pressure p}), and the boundedness of $a(\cdot,\cdot)$, the following estimate holds
\begin{align*}
\begin{aligned}
\|P_h p-p_h\|\lesssim &\sup_{\boldsymbol{\psi}_h\in V_h}\frac{b(\boldsymbol{\psi}_h,P_h p-p_h)}{\|\boldsymbol{\psi}_h\|_X}
\lesssim  \left(\mu h^{r-1}\|\boldsymbol{u}^s\|_{r,s}^2+h^{r-1}\|p^s\|_{r-1,s}^2+\mu\|u-u_h\|_X^2\right)^{1/2},
\end{aligned}
\end{align*}
which implies (\ref{eqn:error estimate of p regular pressure depend for robust method}) by triangle inequality $\|p-p_h\|\leq \|p-P_h p\|+\|P_h p-p_h\|$ and (\ref{eqn:error estiamte regular for pressure robust}).
\end{proof}

\section{Numerical experiments}
In this section, we present three two-dimensional numerical examples to assess the performance of the proposed method. The first two examples are manufactured problems, whereas the third one is a more involved driven cavity problem with a piecewise linear interface, for which no analytical solution is available.

The purpose of the first example is to examine the influence of pressure scaling on the velocity error: the viscosity is fixed and the pressure field is multiplied by different factors. Conversely, the second example fixes the pressure field and varies the viscosity coefficient, allowing us to investigate the sensitivity of the velocity error with respect to viscosity perturbations. These two tests highlight the contrasting behavior of classical and pressure-robust schemes.
For the first two examples, we consider two polynomial orders, namely $k=2$ and $k=3$. For $k=2$, the Stokes equations are discretized by the $\boldsymbol{P}_2^{+}$--$P_1$ element pair and the Darcy problem by the $RT_1$--$P_1$ element. For $k=3$, the corresponding discretizations are $\boldsymbol{P}_3^{+}$--$P_2$ for Stokes and $RT_2$--$P_2$ for Darcy. For the third example, we only consider the case $k=2$, as the purpose of this test is to assess robustness on a benchmark configuration rather than to examine convergence, and the higher-order case has already been tested in the manufactured examples.

All computations are carried out in \textsc{GNU Octave} (version~10.3.0) on a standard workstation. The source codes are publicly available at \texttt{https://github.com/custzjc/Stokes-Darcy-FEM}. The matrices are assembled with exact integration, and the right-hand sides and error norms are evaluated using an eighth-order accurate quadrature to ensure reliable error measurements. Lagrange multipliers are employed to impose the interface conditions \cite{riviereLocallyConservativeCoupling2005} and to enforce the zero-mean constraint for the pressure, while a penalty method is adopted for the Dirichlet boundary conditions. The resulting global linear systems are solved using the built-in backslash operator, which invokes a sparse direct LU factorization.

For convenience of presentation in the numerical plots, we abbreviate the velocity and pressure errors as follows:
\begin{align*}
E_h&=\|\boldsymbol{u}-\boldsymbol{u}_h\|_X,\quad 
E_h^s=|\boldsymbol{u}^s-\boldsymbol{u}_h^s|_{1,s},\quad
E_{h,1}^d=\|\boldsymbol{u}^d-\boldsymbol{u}_h^d\|_d,\quad
E_{h,2}^d=\|\nabla\cdot(\boldsymbol{u}^d-\boldsymbol{u}_h^d)\|_d,
\\
e_h&=\|p-p_h\|,\quad e_h^s=\|p-p_h\|_s,\quad e_h^d=\|p-p_h\|_d.
\end{align*}
\subsection{Basis functions, degree of freedom, and computing $(\boldsymbol{f},\Pi_h\boldsymbol{\psi}_h)$}
To facilitate reproducibility and to clarify the implementation of the pressure-robust formulation, we first summarize the basis functions, degrees of freedom, and the assembly of the reconstructed right-hand side for the case $k=2$, which is used in all benchmark tests. The extension to $k=3$ is straightforward, since the divergence-free reconstruction operator and the associated coupling terms remain unchanged and only the local polynomial degrees differ.
The $\boldsymbol{P}_2^{+}$ and $P_1$ spaces utilize standard Lagrange basis functions supplemented with bubble functions. For the Raviart-Thomas ($RT_1$) elements, we use the Bernstein-Bézier local basis functions as detailed in \cite{ainsworthBernsteinBezierBasis2015}. The specific basis functions construction for any element $T$ are given as follows:
\begin{align*}
&\frac{1}{2|T|}(\lambda_2|e_3|\boldsymbol{t}_3-\lambda_3|e_2|\boldsymbol{t}_2),\quad
\frac{1}{2|T|}(\lambda_3|e_1|\boldsymbol{t}_1-\lambda_1|e_3|\boldsymbol{t}_3),\quad
\frac{1}{2|T|}(\lambda_1|e_2|\boldsymbol{t}_2-\lambda_2|e_1|\boldsymbol{t}_1),\\
&\frac{1}{|T|}(\lambda_3|e_2|\boldsymbol{t}_2+\lambda_2|e_3|\boldsymbol{t}_3),\quad
\frac{1}{|T|}(\lambda_3|e_1|\boldsymbol{t}_1+\lambda_1|e_3|\boldsymbol{t}_3),\quad
\frac{1}{|T|}(\lambda_2|e_1|\boldsymbol{t}_1+\lambda_1|e_2|\boldsymbol{t}_2),\\
&\frac{1}{|T|}(\lambda_1\lambda_2|e_3|\boldsymbol{t}_3-\lambda_1\lambda_3|e_2|\boldsymbol{t}_2),\quad
\frac{1}{|T|}(\lambda_1\lambda_3|e_2|\boldsymbol{t}_2-\lambda_2\lambda_3|e_1|\boldsymbol{t}_1).
\end{align*}
In this context, let $e_j$ and $\boldsymbol{t}_j$ ($j=1,2,3$) represent the $j$-th edge and its corresponding unit tangent vector, respectively. The barycentric coordinates are denoted by $\{\lambda_j\}_{j=1}^3$, while $|T|$ and $|e_j|$ indicate the area (or length) of element $T$ and edge $e_j$, respectively.

For the convenience of programming implementation, we will continue to provide the degree of freedom ($d.o.f$) for each finite element space \cite{loggAutomatedSolutionDifferential2012}. Let $S_p, S_s$, and $S_e$ represent the number of points, edges, and elements of $\mathcal{T}_h(\Omega^s)$, respectively. The $d.o.f$ for any component  function $v_h$ in $V_h^s$ with $k=2$ is defined by point evaluation
\begin{align*}
\left\{
\begin{aligned}
&v_h(x_p^j), \quad j=1,\cdots,S_p,\\
&v_h(x_m^j), \quad j=1,\cdots,S_s,\\
&v_h(x_T)-\sum_{x_p^j\subset T} v_h(x_p^j)-\sum_{x_m^j\subset T} v_h(x_m^j), \quad T\in\mathcal{T}_h(\Omega^s).
\end{aligned}
\right.
\end{align*}
where $\{x_p^j\}_{j=1}^{S_p}$, $\{x_m^j\}_{j=1}^{S_s}$, and $\{x_T\}_{T\in\mathcal{T}_h(\Omega^s)}$ are the enumerations of points in $\mathcal{T}_h(\Omega^s)$, midpoints of edges in $\mathcal{E}_h(\Omega^s)$, and centroid of elements in $\mathcal{T}_h(\Omega^s)$, respectively. The dimension of $V_h^s$ is 
$2(S_p+S_s+S_e)$ in 2-dimensional case.
If we expand the function $v_h$ into the form of a linear combination of bases, the coefficients corresponding to the quadratic Lagrangian basis function are $v_h(x_p^j)$ and $v_h(x_m^j)$, respectively. The third point evaluation in above equation is the coefficients corresponding to the bubble basis functions ($27b_T$) in element $T$. 

For the Darcy domain and $k=2$, the finite element space $V_h^d$ is the $RT_1$ with $d.o.f$ defined by
\begin{align*}
\left\{
\begin{aligned}
&\int_e \boldsymbol{v}_h\cdot\boldsymbol{n} q_h, \quad q_h \mbox{~is the basis function of~} P_1(e),  e\in\mathcal{E}_h(\Omega^d),\\
&\int_T \boldsymbol{v}_h\cdot\boldsymbol{\psi}_h,\quad \boldsymbol{\psi}_h \mbox{~is the basis function of~} [P_0(T)]^2, T\in\mathcal{T}_h(\Omega^d).
\end{aligned}
\right.
\end{align*}
for any function $\boldsymbol{v}_h\in V_h^d$. Let $D_s$ and $D_e$ represent the number of edges and elements of $\mathcal{T}_h(\Omega^d)$, respectively. The dimension of $RT_1$ is $2(D_s+D_e)$.
  
The pressure space $Q_h$ with $k-1=1$ is the first-order discontinuous Lagrange element. For any function $q_h$ in $Q_h$, the $d.o.f$ can be defined by
\begin{align*}
q_h(x_T^j),  \quad j=1,2,3, \mbox{~and~} T\in\mathcal{T}_h,
\end{align*}
where $\{x_T^j\}_{j=1}^3$ is an enumerations of vertices in $T$. The dimension of $Q_h$ is $3N_e$, where $N_e$ is the number of elements of $\mathcal{T}_h$.

When programming to implement pressure-robust method, only the right-hand side of the final linear system differs from classical method. In detail, the classical and pressure-robust methods to construct right-hand sides based on $(\boldsymbol{f},\boldsymbol{\psi}_j)$ and $(\boldsymbol{f},\Pi_h\boldsymbol{\psi}_j)$, respectively, where $\boldsymbol{\psi}_j$ is the basis functions in $V_h$. For the term $(\boldsymbol{f},\boldsymbol{\psi}_j)$, a traditional quadrature formula is enough by obtaining the values of the integration points with respect to $\boldsymbol{f}$ and $\boldsymbol{\psi}_j$, respectively. But for the term $(\boldsymbol{f},\Pi_h\boldsymbol{\psi}_j)$, it becomes more complicated to get the values of the integration points with respect to $\Pi_h\boldsymbol{\psi}_j$. For any element $T\in\mathcal{T}_h$, let $\{\boldsymbol{\chi}_{\ell}\}_{\ell=1}^8$ be the basis functions in $RT_1$ related to $T$. Then, we expand $\Pi_h\boldsymbol{\psi}_{j|T}$ into the form of a linear combination of bases, i.e., $\Pi_h\boldsymbol{\psi}_{j|T}=\sum_{\ell=1}^8 w_{\ell}\boldsymbol{\chi}_{\ell}$. The  coefficients $w_{\ell}$ will be determined by
\begin{align*}
\left\{
\begin{aligned}
&(\Pi_h\boldsymbol{\psi}_j\cdot\boldsymbol{n}, \phi_h)_e=(\boldsymbol{\psi}_j\cdot\boldsymbol{n}, \phi_h)_e, \quad\forall \phi_h\in P_1(e), e\subset\partial T,\\
&(\Pi_h\boldsymbol{\psi}_j,\boldsymbol{\phi}_h)_T=(\boldsymbol{\psi}_j,\boldsymbol{\phi}_h)_T,\quad \forall \boldsymbol{\phi}_h\in [P_0(T)]^2.
\end{aligned}
\right.
\end{align*}
We also note that $\boldsymbol{\psi}_j$ has vanishing boundary values on $\Gamma^i$, and by Lemma~4.1 the reconstruction $\Pi_h^i\boldsymbol{\psi}_j$ automatically satisfies $\Pi_h^i\boldsymbol{\psi}_j \cdot \boldsymbol{n}=0$ on $\Gamma^i$. Therefore, no additional enforcement of boundary conditions is required in the assembly.

\subsection{Example: Varying pressure magnitude}\label{ex1}
This example is taken from \cite{correaUnifiedMixedFormulation2009} and has been modified with an adjustable parameter $\gamma$ to compares the error behavior of pressure-robust method with classical method under varying pressure. The domain is $\Omega=(0,1)\times(0,1)$ with interface $\Gamma=(0,1)\times \{0.5\}$, i.e. $\Omega^s=(0,1)\times(0.5,1)$ and $\Omega^d=(0,1)\times(0,0.5)$. We consider the coupled system with the following exact solution  
\begin{align*}
&\boldsymbol{u}^s=\left[-\frac{e^{\frac{y}{2}}\sin(\pi x)}{2\pi^2},\frac{e^{\frac{y}{2}}\cos(\pi x)}{\pi}\right]^t, \quad p^s=-\frac{\gamma e^{\frac{y}{2}}\cos(\pi x)}{\pi},\\
&\boldsymbol{u}^d=\left[-2e^{\frac{y}{2}}\sin(\pi x),\frac{e^{\frac{y}{2}}\cos(\pi x)}{\pi}\right]^t,  \quad p^d=-\frac{(\gamma+1)e^{\frac{y}{2}}\cos(\pi x)}{\pi}.
\end{align*}
In (\ref{eqn:state weak 1}), the constant $\frac{\alpha_1}{\sqrt{\kappa_1}}=(1+4\pi^2)/2$  can be directly derived from the exact solutions of the system. Following this approach, we also obtain the functional expressions for $\boldsymbol{f}$ and $\boldsymbol{g}$. To evaluate the performance of both methods, we fix $\mu=1$ and $K=10^{-4}$ while selecting $\gamma$ values from the geometric sequence $\{1, 10, 10^2, 10^3, 10^4, 10^5\}$.

As demonstrated in Table~\ref{tab:time and error ex1}, a comprehensive comparison reveals that while both the classical and pressure-robust methods achieve the theoretically predicted convergence orders, their error behaviors diverge significantly with increasing $\gamma$. At $\gamma=1$, both methods exhibit nearly identical performance in both error metrics and computational time. However, for $\gamma=10^5$, the classical method demonstrates a substantially larger velocity error compared to the pressure-robust method, despite comparable computational costs. This discrepancy is further corroborated by the component-wise error decomposition in Figures~\ref{fig:convergence velocity for ex1}, \ref{fig:convergence pressure for ex1}, and \ref{fig:error R change for ex1}, which highlight that the elevated velocity error in the classical method predominantly originates from the Stokes domain. These observations collectively confirm that the velocity error of the pressure-robust method remains decoupled from pressure influences, whereas the classical method's velocity error exhibits pressure-dependent behavior.

Figure~\ref{fig:error R change for ex1} reveals an additional critical feature: both the velocity and pressure errors of the classical method, along with the pressure error of the pressure-robust method, display a distinct piecewise growth pattern with respect to $\gamma$. Specifically, errors increase gradually for $\gamma\in[1,10^2]$ but accelerate markedly for $\gamma\in[10^2,10^5]$. This bifurcated trend aligns precisely with the theoretical framework established in Theorems~\ref{the:classical estimate} and \ref{the:pressure-robust}. As indicated by the error estimates in (\ref{eqn:error estimate regular pressure depend}), (\ref{eqn:error estimate of p regular pressure depend}), and (\ref{eqn:error estimate of p regular pressure depend for robust method}), all three error components adopt a unified parametric form $C_1+\gamma C_2$, where $C_1$ and $C_2$ represent $\gamma$-independent constants. This formulation explains the observed transition between error regimes as $\gamma$ exceeds $10 ^2$.

Figures~\ref{fig:convergence P3 for ex1} and~\ref{fig:error P3 R change for ex1} show that the same conclusion holds for $k=3$ as for $k=2$; in particular, the pressure-robust formulation remains effective for higher-order finite element discretizations, both in theory and in practice.

\begin{table}[htbp]
\centering \caption{The comparison of error and time cost between classical method and pressure-robust method in Example~\ref{ex1}.} \label{tab:time and error ex1}
\begin{tabular*}{\hsize}{@{}@{\extracolsep{\fill}}c|cccccc@{}}
    \hline
   &$d.o.f$          &  $\|\boldsymbol{u}-\boldsymbol{u}_h\|_X$ & order   &$\|p-p_h\|$&order&time(s)\\
    \hline
    \hline
      classical&$319$    &  $1.016$E-1  & $-$     &$5.252$E0 &$-$     &$0.820$\\

       method&$1179$   &  $2.774$E-2  & $0.993$ &$6.834$E-1 &$1.559$ &$0.716$ \\

      $\gamma=1$&$4531$   &  $4.861$E-3  & $1.293$ &$8.505$E-2 &$1.547$ &$0.854$ \\

     &$17763$   &  $1.087$E-3  & $1.095$ &$1.073$E-2 &$1.515$ &$3.220$\\

     &$70339$   &  $2.665$E-4  & $1.021$ &$1.335$E-3 &$1.514$ &$89.42$\\
    \hline
      pressure-robust&$319$    &  $1.015$E-1  & $-$     &$5.252$E0 &$-$     &$2.436$ \\

      method&$1179$   &  $2.765$E-2  & $0.995$ &$6.834$E-1 &$1.559$ &$2.424$ \\

      $\gamma=1$&$4531$   &  $4.850$E-3  & $1.293$ &$8.506$E-2 &$1.547$ &$2.617$ \\

     &$17763$   &  $1.086$E-3  & $1.094$ &$1.073$E-2 &$1.515$ &$5.536$ \\

     &$70339$   &  $2.664$E-4  & $1.0021$ &$1.336$E-3 &$1.514$ &$96.18$\\
    \hline
    \hline
      classical&$319$    &  $7.029$E1  & $-$     &$5.957$E2 &$-$     &$0.772$\\

       method&$1179$   &  $1.919$E1  & $0.992$ &$1.423$E3 &$1.095$ &$0.793$ \\

      $\gamma=10^5$&$4531$   &  $5.253$E0  & $0.962$ &$3.476$E1 &$1.047$ &$0.933$ \\

     &$17763$   &  $1.288$E0  & $1.028$ &$8.658$E0 &$1.017$ &$3.719$\\

     &$70339$   &  $3.137$E-1  & $1.026$ &$2.169$E0 &$1.005$ &$95.66$\\
    \hline
      pressure-robust&$319$    &  $1.015$E-1  & $-$     &$5.479$E2 &$-$     &$2.598$ \\

      method&$1179$   &  $2.765$E-2  & $0.995$ &$1.343$E2 &$1.075$ &$2.572$ \\

      $\gamma=10^5$&$4531$   &  $4.850$E-3  & $1.293$ &$3.356$E1 &$1.030$ &$2.770$ \\

     &$17763$   &  $1.086$E-3  & $1.094$ &$8.379$E0 &$1.015$ &$5.977$ \\

     &$70339$   &  $2.664$E-4  & $1.021$ &$2.097$E0 &$1.006$ &$98.96$\\
    \hline
\end{tabular*}
\end{table}

\begin{figure}[htbp]
  \centering
  \includegraphics[width=0.7\textwidth]{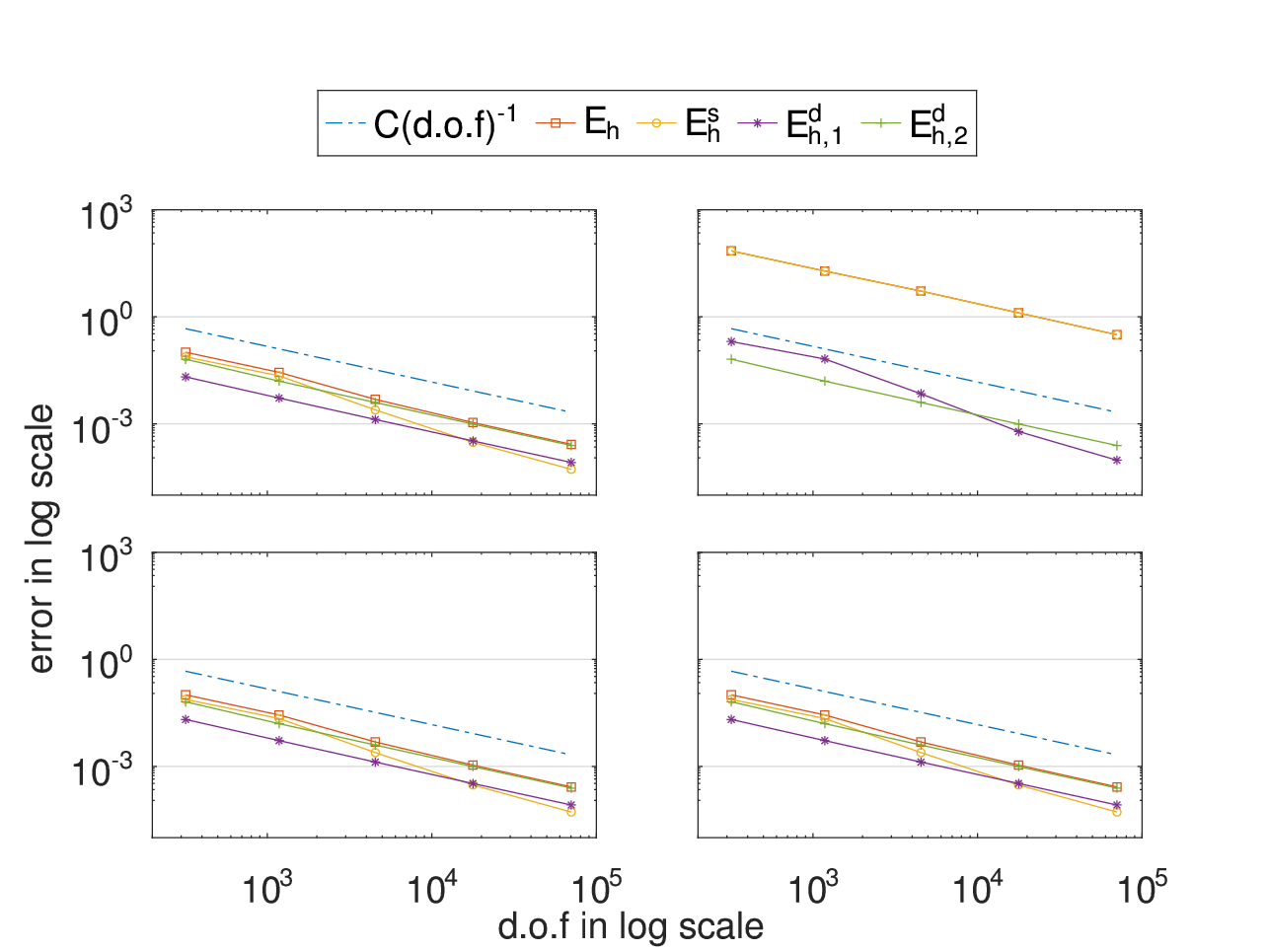}
\caption{Convergence rates of velocity for classical method (top) and pressure-robust method (bottom) with $\gamma=1$ (left) and $10^5$ (right), respectively, in Example~\ref{ex1}.}
\label{fig:convergence velocity for ex1}
\end{figure}

\begin{figure}[htbp]
  \centering
  \includegraphics[width=0.7\textwidth]{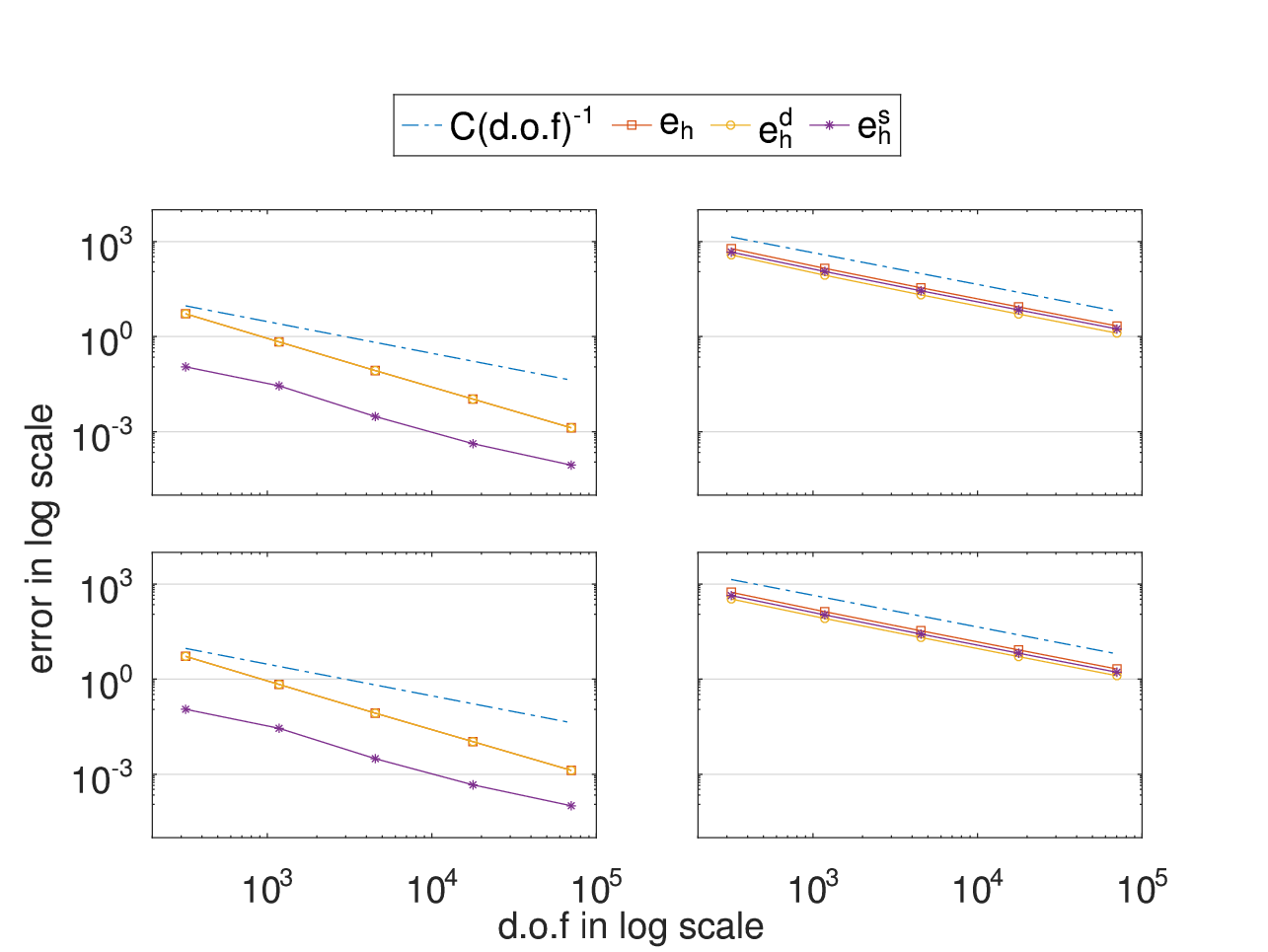}
\caption{Convergence rates of pressure for classical method (top) and pressure-robust method (bottom) with $\gamma=1$ (left) and $10^5$ (right), respectively, in Example~\ref{ex1}.}
\label{fig:convergence pressure for ex1}
\end{figure}

\begin{figure}[htbp]
  \centering
  \includegraphics[width=0.7\textwidth]{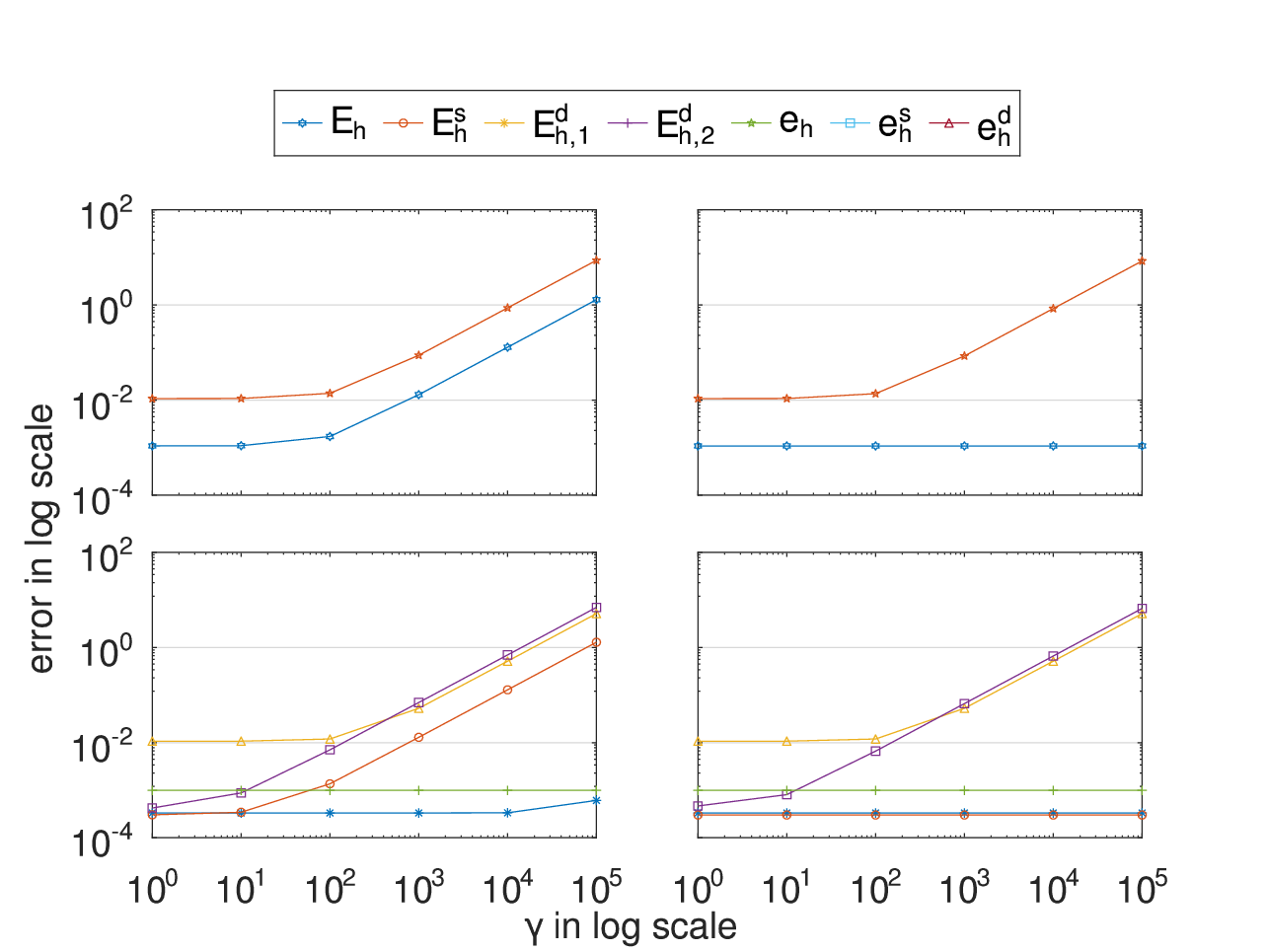}
\caption{Total errors (top) and component errors (bottom) for classical method (left) and pressure-robust method (right) with $d.o.f=17763$ in Example~\ref{ex1}.}
\label{fig:error R change for ex1}
\end{figure}

\begin{figure}[htbp]
  \centering
  \includegraphics[width=0.7\textwidth]{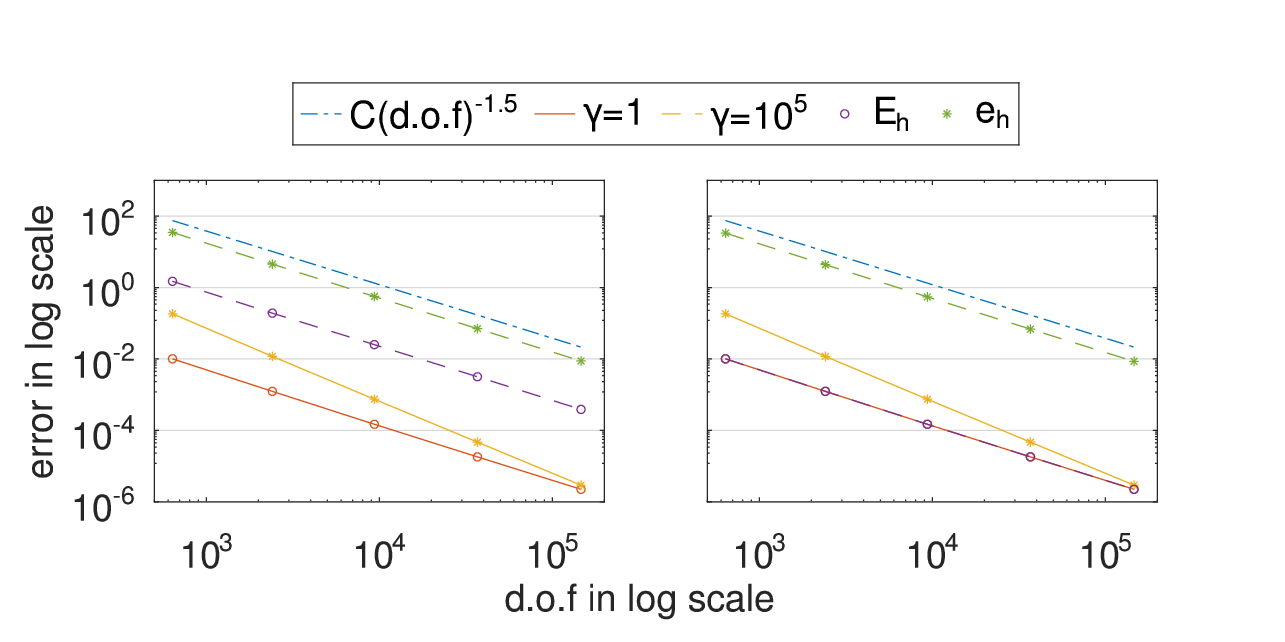}
\caption{Convergence rates for classical method (left) and pressure-robust method (right) with $k=3$ in Example~\ref{ex1}.}
\label{fig:convergence P3 for ex1}
\end{figure}

\begin{figure}[htbp]
  \centering
  \includegraphics[width=0.7\textwidth]{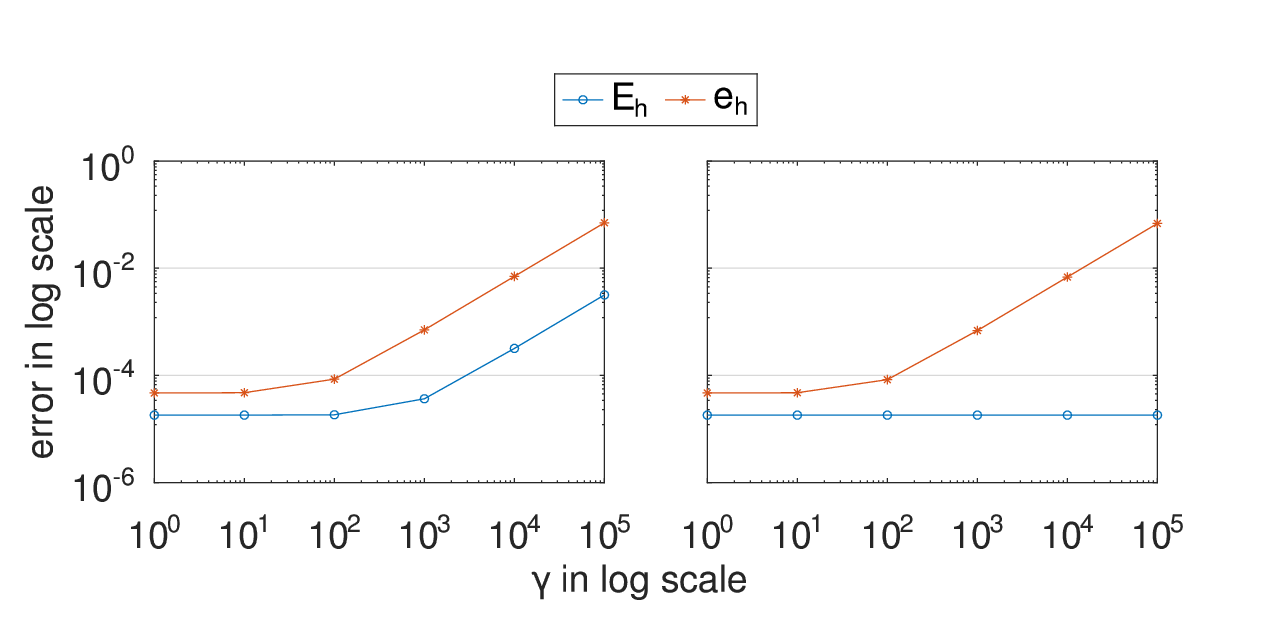}
\caption{Errors for classical method (left) and pressure-robust method (right) with $k=3$ and $d.o.f=36883$ in Example~\ref{ex1}.}
\label{fig:error P3 R change for ex1}
\end{figure}

\subsection{Example: Varying viscosity}\label{ex2}
This example from reference \cite{jiaPressurerobustWeakGalerkin2024} compares the error behavior of pressure-robust method with classical method under varying viscosity.
The Stokes and Darcy domains are $\Omega^s=(0,\pi)\times(0,\pi)$ and $\Omega^d=(0,\pi)\times(-\pi,0)$, respectively. The interface is $\Gamma: (0,\pi)\times \{0\}$. The exact solution are as follows
\begin{align*}
&\boldsymbol{u}^s=\left[2\sin y\cos y\cos x, (\sin^2 y-2)\sin x\right]^t, \quad p^s=\sin x\sin y+1669/(87\pi^2),\\
&\boldsymbol{u}^d=\left[-(e^y-e^{-y})\cos x,-(e^y+e^{-y})\sin x\right]^t,  \quad p^d=(e^y-e^{-y})\sin x+1669/(87\pi^2).
\end{align*}
This additive constant $1669/(87\pi^2)$ ensures the condition $p\in L_0^2(\Omega)$ holds. 
Under the tangential constraint $\boldsymbol{u}^s\cdot\boldsymbol{\tau}_{1|\Gamma}=-2D(\boldsymbol{u}^s)
\boldsymbol{n}^s\cdot\boldsymbol{\tau}_{1|\Gamma}=0$, the coefficient $\alpha_1$ 
in (\ref{eqn:state boundary 3}) admits any bounded value. Implementing with $K=10^{-4}$, we conduct a viscosity study ($\mu\in[10^{-6}, 10^6]$) to 
quantify $d.o.f$-convergence relationships and error performance contrasting classical and pressure-robust method. 

Focus on the error performance of velocity.
With viscosity coefficients $\mu=10^{-6},1,10^6$, Table~\ref{tab:time and error u6 ex2} demonstrate that both methods attain theoretically predicted convergence rates, while their error magnitudes exhibit different dependence on viscosity variations. In detail, when $\mu=1$ or $10^6$, Table~\ref{tab:time and error u6 ex2} shows the errors of classical and pressure-robust method have almost the same performance. However, when $\mu=10^{-6}$, classical method are affected by the viscosity, and the error in velocity is several orders of magnitude larger than that of pressure-robust method. The streamline comparison in Figure~\ref{fig:streamline for ex2} also reflects this point. This numerical phenomenon is completely consistent with theoretical analysis in (\ref{eqn:error estimate regular pressure depend}) and (\ref{eqn:error estiamte regular for pressure robust}), i.e. the error of classical method demonstrate an inverse functional dependence on the viscosity and the error of pressure-robust method is independent of viscosity. 
For a more intuitive comparison of convergence behavior and error components between the two methods, Figure~\ref{fig:convergence of velocity for ex2} comprehensively presents both total errors and their constituent elements. With the $d.o.f$ fixed at $8931$, Figure~\ref{fig:error for ex2} subsequently demonstrates the error evolution as a function of the parameter $\mu$.

Turning gaze towards the error of pressure, Table~\ref{tab:time and error u6 ex2} reveals both methods maintain comparable error magnitudes and demonstrate analogous convergence characteristics with varying viscosity. A more intuitive comparison is presented in Figure~\ref{fig:convergence of pressure for ex2}. 
With the $d.o.f$ fixed at $8931$, Figure~\ref{fig:error for ex2} demonstrates the error behavior under varying values of the parameter $\mu$. For $\mu\in[10^{-6},10^{-3}]$, the error remains nearly constant, indicating negligible dependence on $\mu$ in this regime. In contrast, as $\mu$ increases from $10^{-2}$ to $10^6$, the error grows linearly with a consistent slope. This trend aligns with the theoretical error bounds derived in (\ref{eqn:error estimate of p regular pressure depend}) and (\ref{eqn:error estimate of p regular pressure depend for robust method}), which can be concisely expressed as $\|p-p_h\|\lesssim \mu C_u+C_p$,
where $C_u$ and $C_p$ are problem-dependent constants associated with the velocity and pressure approximations, respectively.

For the case $k=3$, the results displayed in Figures~\ref{fig:convergence P3 for ex2} and~\ref{fig:error P3 mu change for ex2} show that the conclusions obtained for $k=2$ remain valid: fixing the pressure field and varying the viscosity parameter leads to a pronounced sensitivity in the classical method, whereas the pressure-robust formulation remains unaffected. This confirms that the observed robustness with respect to viscosity persists for higher-order finite element discretizations.

\begin{table}[htbp]
\centering \caption{The comparison of error and time cost between classical method and pressure-robust method in Example~\ref{ex2}.} \label{tab:time and error u6 ex2}
\begin{tabular*}{\hsize}{@{}@{\extracolsep{\fill}}c|cccccc@{}}
    \hline
   &$d.o.f$          &  $\|\boldsymbol{u}-\boldsymbol{u}_h\|_X$ & order   &$\|p-p_h\|$&order&time(s)\\
    \hline
    \hline
      classical&$167$    &  $3.607$E4  & $-$     &$2.526$E0 &$-$     &$0.763$\\

       method&$603$   &  $9.622$E3  & $1.029$ &$6.822$E-1 &$1.019$ &$0.789$ \\

      $\mu=10^{-6}$&$2291$   &  $2.309$E3  & $1.069$ &$1.748$E-1 &$1.020$ &$0.903$ \\

     &$8931$   &  $5.464$E2  & $1.059$ &$4.397$E-2 &$1.014$ &$1.134$\\

     &$35267$   &  $1.356$E2  & $1.014$ &$1.101$E-2 &$1.008$ &$4.181$\\
     
     &$140163$  &  $3.359$E1  & $1.011$ &$2.753$E-3 &$1.004$ &$126.9$ \\
    \hline
      pressure-robust&$167$    &  $5.635$E0  & $-$     &$2.524$E0 &$-$     &$2.626$ \\

      method&$603$   &  $1.577$E0  & $0.991$ &$6.818$E-1 &$1.019$ &$2.549$ \\

      $\mu=10^{-6}$&$2291$   &  $4.052$E-1  & $1.018$ &$1.746$E-1 &$1.020$ &$2.732$ \\
      
     &$8931$   &  $1.009$E-1  & $1.021$ &$4.393$E-2 &$1.014$ &$3.269$ \\

     &$35267$   &  $2.543$E-2  & $1.003$ &$1.099$E-2 &$1.008$ &$7.523$\\
     
     &$140163$  &  $6.353$E-3  & $1.005$ &$2.750$E-3 &$1.004$ &$126.6$ \\
    \hline
    \hline
      classical&$167$    &  $5.633$E0  & $-$     &$7.171$E3 &$-$     &$0.814$\\

       method&$603$   &  $1.571$E0  & $0.994$ &$1.105$E3 &$1.456$ &$0.802$ \\

      $\mu=1$&$2291$   &  $4.035$E-1  & $1.018$ &$1.413$E2 &$1.541$ &$0.890$ \\

     &$8931$   &  $1.006$E-1  & $1.020$ &$1.737$E1 &$1.540$ &$1.884$\\

     &$35267$   &  $2.537$E-2  & $1.003$ &$2.179$E0 &$1.511$ &$7.867$\\
     
     &$140163$  &  $6.339$E-3  & $1.005$ &$2.736$E-1 &$1.503$ &$432.3$ \\
    \hline
      pressure-robust&$167$    &  $5.635$E0  & $-$     &$7.171$E3 &$-$     &$2.889$\\

       method&$603$   &  $1.577$E0  & $0.991$ &$1.105$E3 &$1.456$ &$2.693$ \\

      $\mu=1$&$2291$   &  $4.052$E-1  & $1.018$ &$1.413$E2 &$1.541$ &$2.815$ \\

     &$8931$   &  $1.009$E-1  & $1.021$ &$1.737$E1 &$1.540$ &$3.928$\\

     &$35267$   &  $2.543$E-2  & $1.003$ &$2.179$E0 &$1.511$ &$9.710$\\
     
     &$140163$  &  $6.353$E-3  & $1.005$ &$2.736$E-1 &$1.503$ &$418.6$ \\
    \hline
    \hline
      classical&$167$    &  $5.633$E0  & $-$     &$7.171$E9 &$-$     &$0.898$\\

       method&$603$   &  $1.571$E0  & $0.994$ &$1.105$E9 &$1.456$ &$0.818$ \\

      $\mu=10^6$&$2291$   &  $4.035$E-1  & $1.018$ &$1.413$E8 &$1.541$ &$0.901$ \\

     &$8931$   &  $1.006$E-1  & $1.020$ &$1.737$E7 &$1.540$ &$2.321$\\

     &$35267$   &  $2.537$E-2  & $1.003$ &$2.179$E6 &$1.511$ &$18.88$\\
     
     &$140163$  &  $6.339$E-3  & $1.005$ &$2.735$E5 &$1.504$ &$1192$ \\
    \hline
      pressure-robust&$167$    &  $5.635$E0  & $-$     &$7.171$E9 &$-$     &$2.726$\\

       method&$603$   &  $1.577$E0  & $0.991$ &$1.105$E9 &$1.456$ &$6.918$ \\

      $\mu=10^6$&$2291$   &  $4.052$E-1  & $1.018$ &$1.413$E8 &$1.541$ &$2.481$ \\

     &$8931$   &  $1.009$E-1  & $1.021$ &$1.737$E7 &$1.540$ &$6.105$\\

     &$35267$   &  $2.543$E-2  & $1.003$ &$2.179$E6 &$1.511$ &$27.60$\\
     
     &$140163$  &  $6.353$E-3  & $1.005$ &$2.735$E5 &$1.504$ &$1261$ \\
    \hline
\end{tabular*}
\end{table}

\begin{figure}[htbp]
  \centering
  \includegraphics[width=0.7\textwidth]{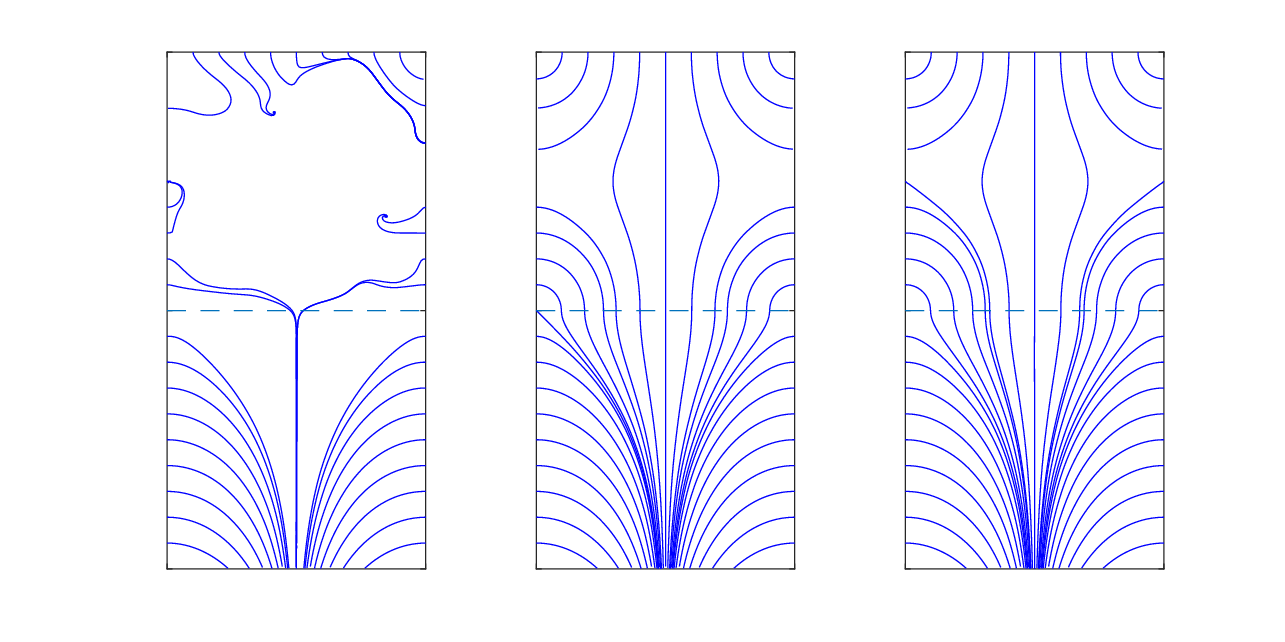}
\caption{Streamlines of velocity for classical method (left), exact solution (middle), and pressure-robust method (right) with $\mu=10^{-6}$ and $d.o.f=8931$, in Example~\ref{ex2}.}
\label{fig:streamline for ex2}
\end{figure}

\begin{figure}[htbp]
  \centering
  \includegraphics[width=0.95\textwidth]{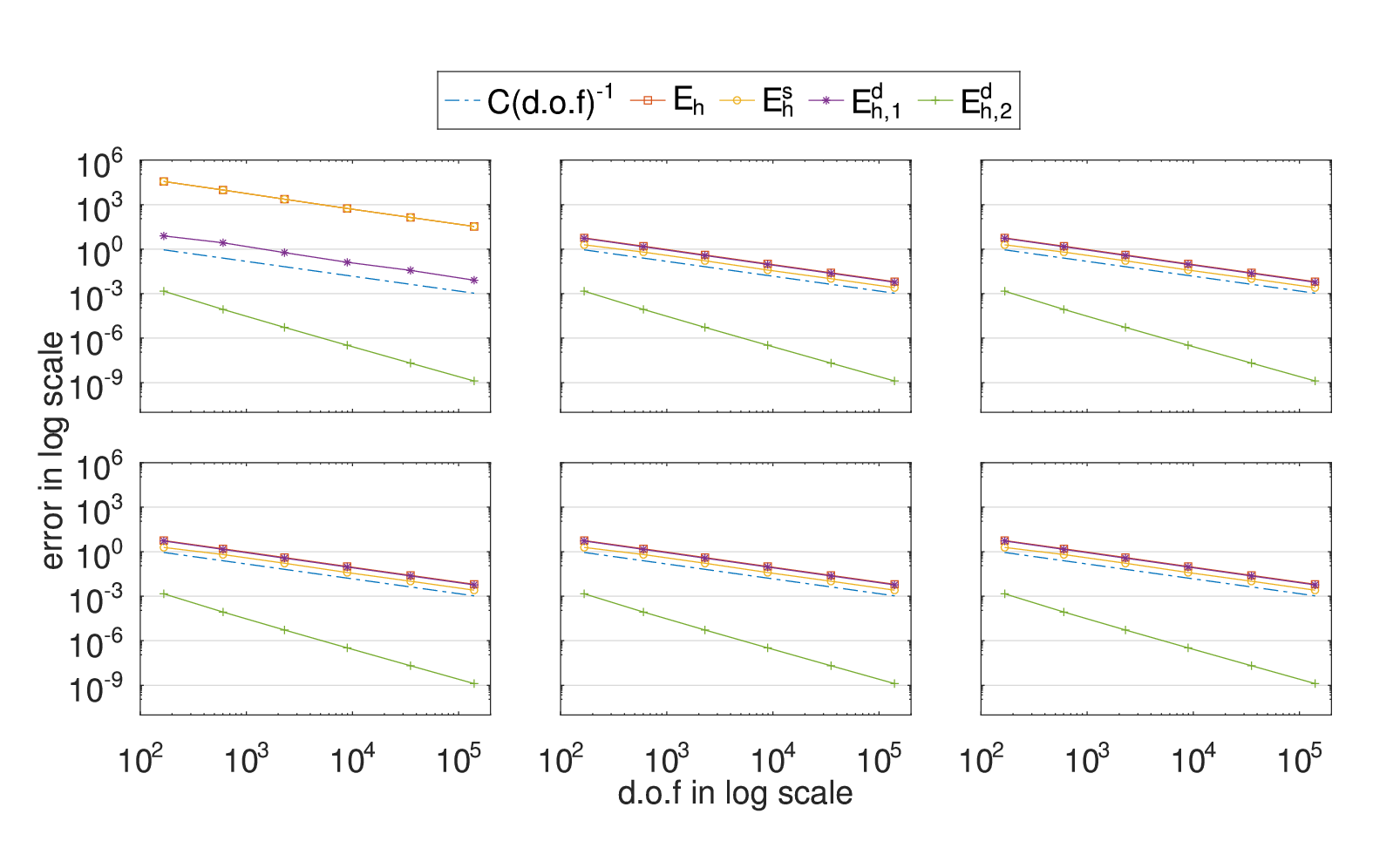}
\caption{Convergence rates of velocity for classical method (top) and pressure-robust method (bottom) with $\mu=10^{-6}$ (left), $1$ (middle), and $10^6$ (right), respectively, in Example~\ref{ex2}.}
\label{fig:convergence of velocity for ex2}
\end{figure}

\begin{figure}[htbp]
  \centering
  \includegraphics[width=0.95\textwidth]{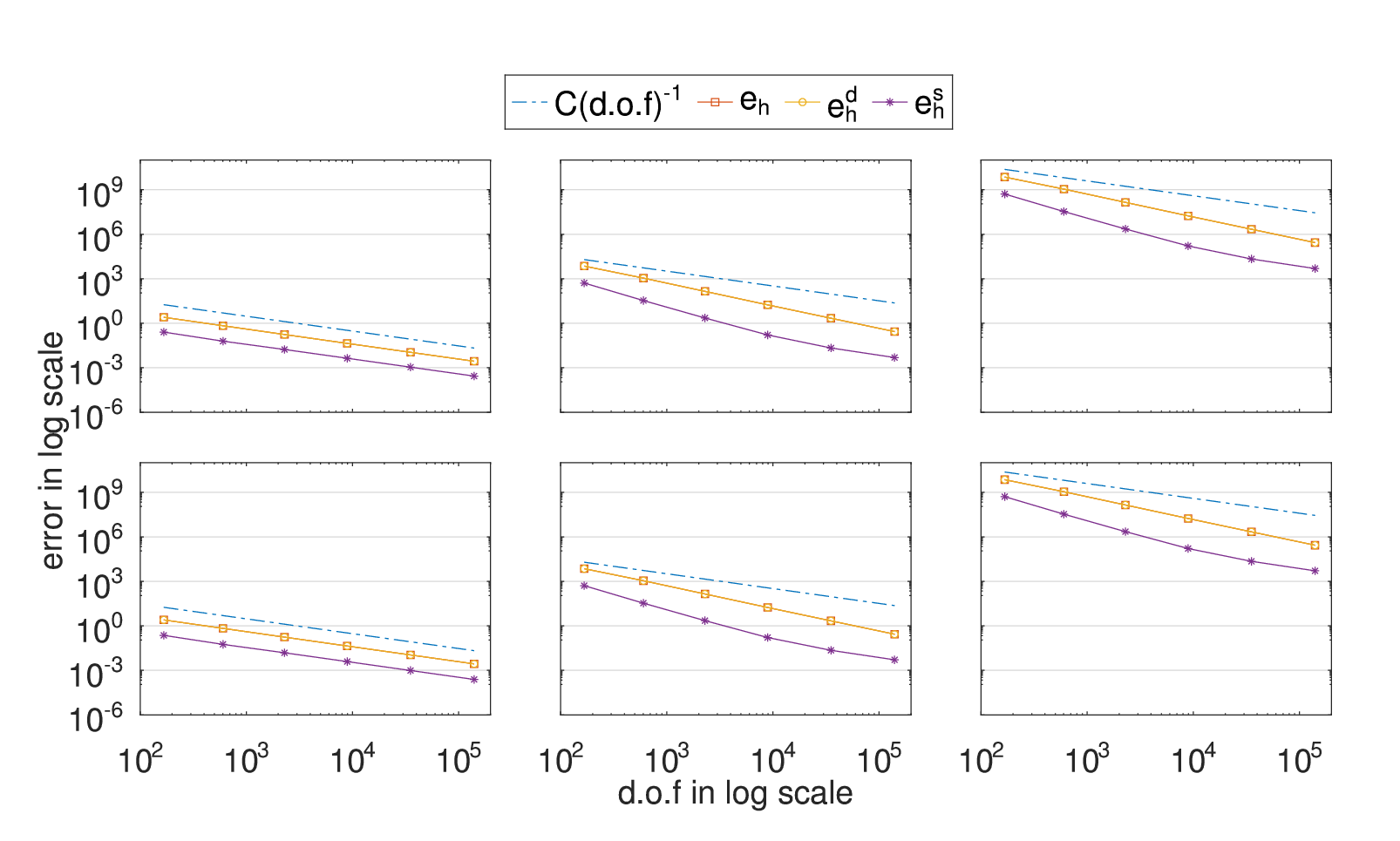}
\caption{Convergence rates of pressure for classical method (top) and pressure-robust method (bottom) with $\mu=10^{-6}$ (left), $1$ (middle), and $10^6$ (right), respectively, in Example~\ref{ex2}.}
\label{fig:convergence of pressure for ex2}
\end{figure}

\begin{figure}[htbp]
  \centering
  \includegraphics[width=0.7\textwidth]{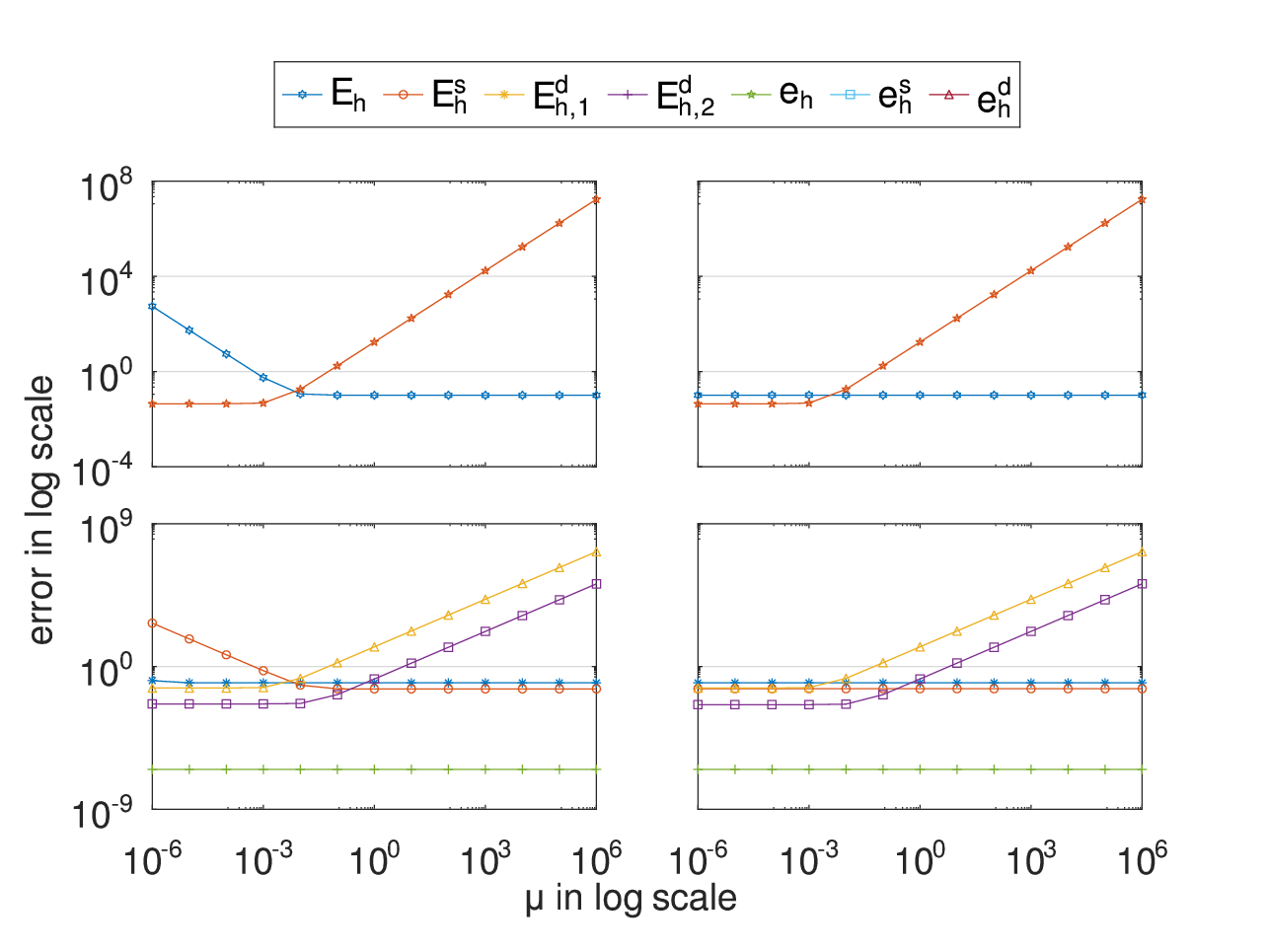}
\caption{Total errors (top) and component errors (bottom)  for classical method (left) and pressure-robust method (right) with $d.o.f=8931$ in Example~\ref{ex2}.}
\label{fig:error for ex2}
\end{figure}

\begin{figure}[htbp]
  \centering
  \includegraphics[width=0.7\textwidth]{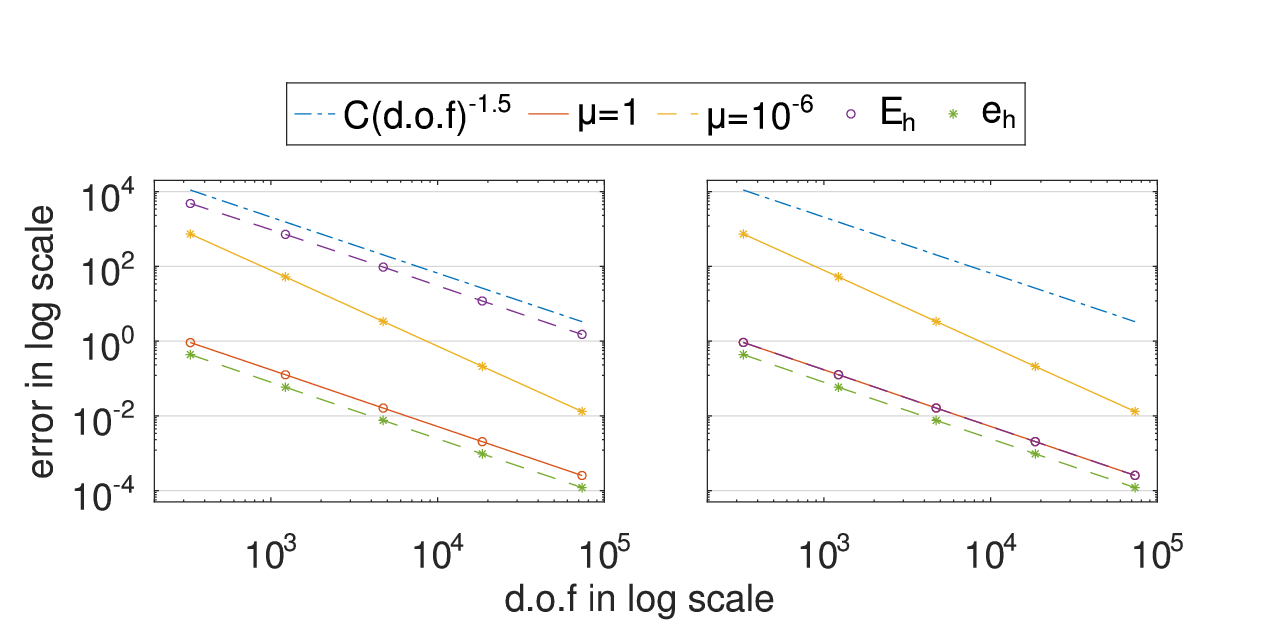}
\caption{Convergence rates for classical method (left) and pressure-robust method (right) with $k=3$ in Example~\ref{ex2}.}
\label{fig:convergence P3 for ex2}
\end{figure}

\begin{figure}[htbp]
  \centering
  \includegraphics[width=0.7\textwidth]{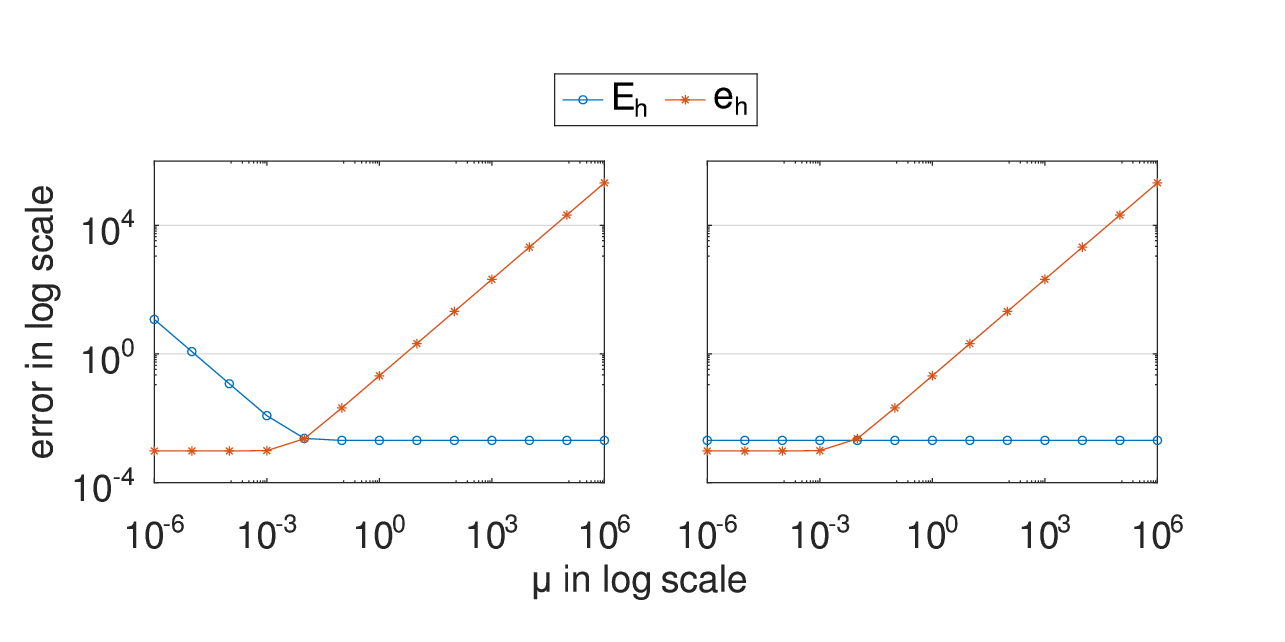}
\caption{Errors for classical method (left) and pressure-robust method (right) with $k=3$ and $d.o.f=18515$ in Example~\ref{ex2}.}
\label{fig:error P3 mu change for ex2}
\end{figure}

\subsection{Example: Lid-driven cavity with a piecewise linear interface}\label{ex3}
In this benchmark, the computational domain is the unit square $(0,1)^2$, with the Stokes region located above the Darcy region. The two subdomains are separated by a piecewise linear interface connecting the points $(0,0.5)$, $(1/3,0.42)$, $(2/3,0.58)$, and $(1,0.5)$; see Figure~\ref{fig:domian and mesh ex3} (left). An interface-fitted triangular mesh is generated using \textsc{Gmsh} and is shown in Figure~\ref{fig:domian and mesh ex3} (right), with a characteristic mesh size $h \approx 0.02$. To excite pressure effects, we add the irrotational forcing
\[
\boldsymbol{f} = \nabla\!\bigl(\lambda \sin(\pi x)\sin(\pi y)\bigr),
\]
where $\lambda$ controls its magnitude. Since this force can be absorbed into the pressure, it leaves the velocity unchanged while modifying the pressure. We set $g = 0$, the permeability $K = 10^{-4}$, and the slip coefficient $\alpha_1 = 1$, and keep these parameters fixed throughout the example. On the lid we prescribe a unit tangential velocity, whereas homogeneous Dirichlet boundary conditions are imposed on the remaining external boundaries. No analytical solution is available for this configuration, making it suitable as a benchmark to assess the practical performance and robustness of the pressure-robust formulation.

Figures~\ref{fig:classicla low and high pressure ex3} and~\ref{fig:presuure robust low and high pressure ex3} display quiver plots of the velocity field, where the color contours represent the magnitude of the velocity and the arrows indicate its direction. These plots highlight the qualitative difference between the classical and pressure-robust methods in this benchmark. For the classical scheme (Figure~\ref{fig:classicla low and high pressure ex3}), the velocity magnitude is already affected by the gradient load for $\mu = 10^{-3}$ as $\lambda$ increases from $0$ to $10^{2}$ and $10^{4}$. This effect becomes dramatic for the smaller viscosity $\mu = 10^{-6}$: the flow pattern is severely distorted and very large spurious velocities are generated, particularly along the interface and near the corners, as indicated by the change in the color scale (from order one up to several thousands). In contrast, the pressure-robust method (Figure~\ref{fig:presuure robust low and high pressure ex3}) produces velocity fields that are virtually indistinguishable for all combinations of $\mu \in \{10^{-3},10^{-6}\}$ and $\lambda \in \{0,10^{2},10^{4}\}$. Both the flow direction and the velocity magnitude remain stable and of order one. These results confirm that the classical discretization is highly sensitive to irrotational forces, with a viscosity-dependent pollution of the velocity, whereas the pressure-robust formulation effectively filters out the gradient contribution and yields a velocity field that is independent of both the pressure and the viscosity, in full agreement with the theoretical predictions.

\begin{figure}[htbp]
  \centering
  \includegraphics[width=0.4\textwidth]{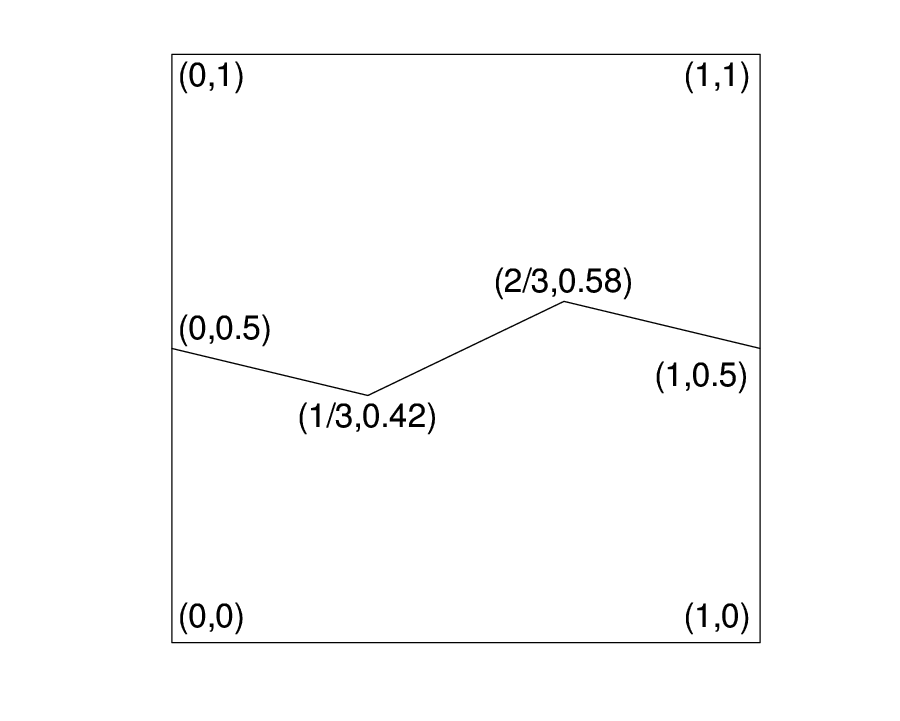}
  \includegraphics[width=0.4\textwidth]{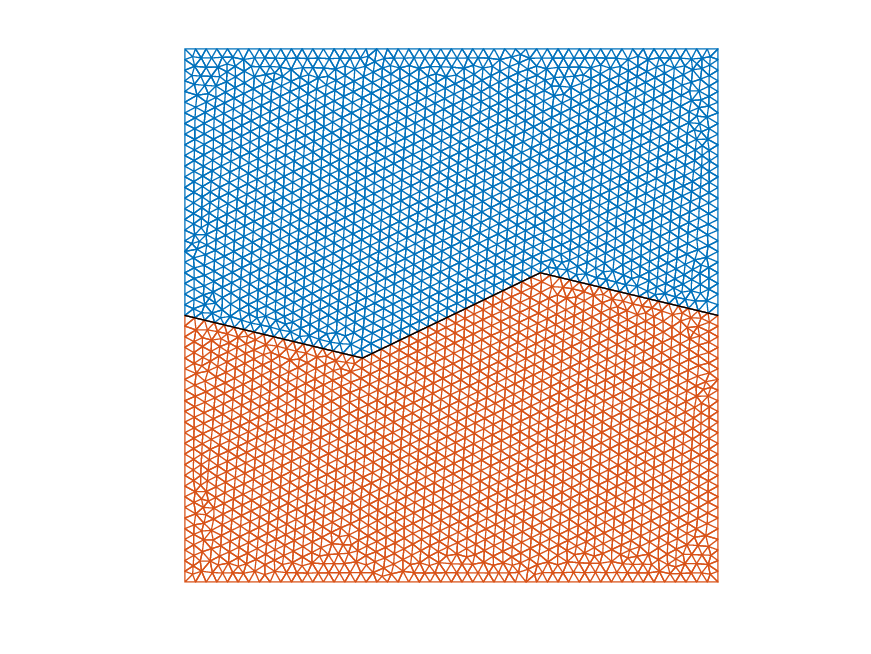}
\caption{Geometric domain (left) and mesh (right) with $h\approx0.02$ in Example~\ref{ex3}.}
\label{fig:domian and mesh ex3}
\end{figure}

\begin{figure}[htbp]
  \centering
  \includegraphics[width=0.45\textwidth]{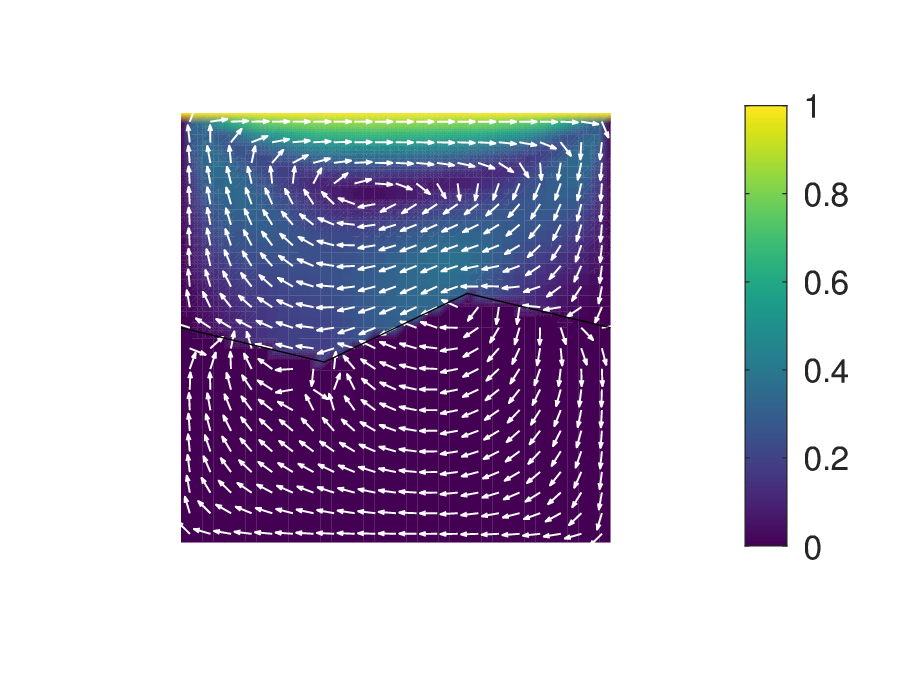}
  \includegraphics[width=0.45\textwidth]{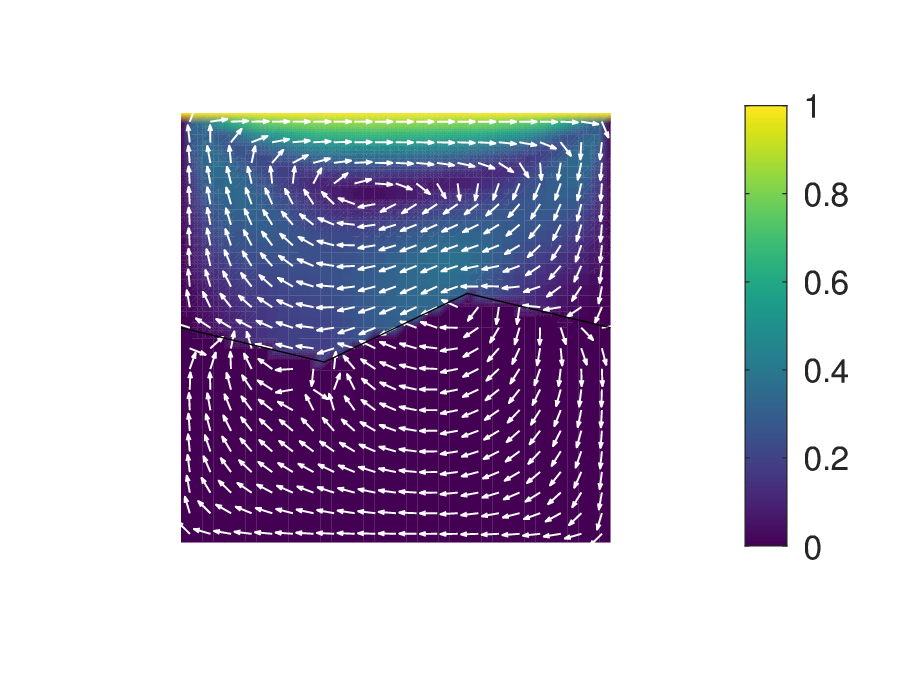}\\
  \includegraphics[width=0.45\textwidth]{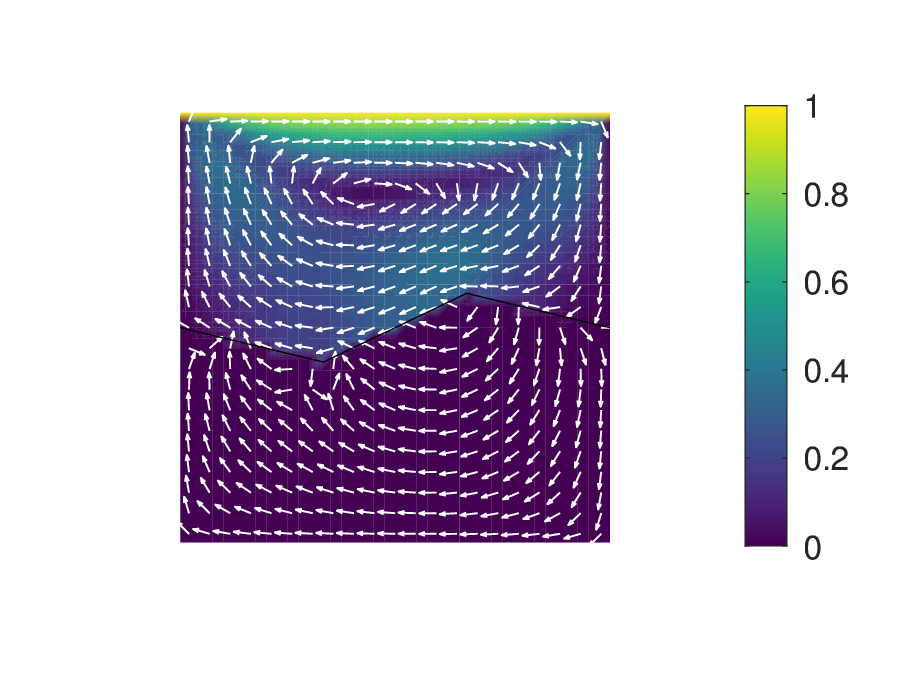}
  \includegraphics[width=0.45\textwidth]{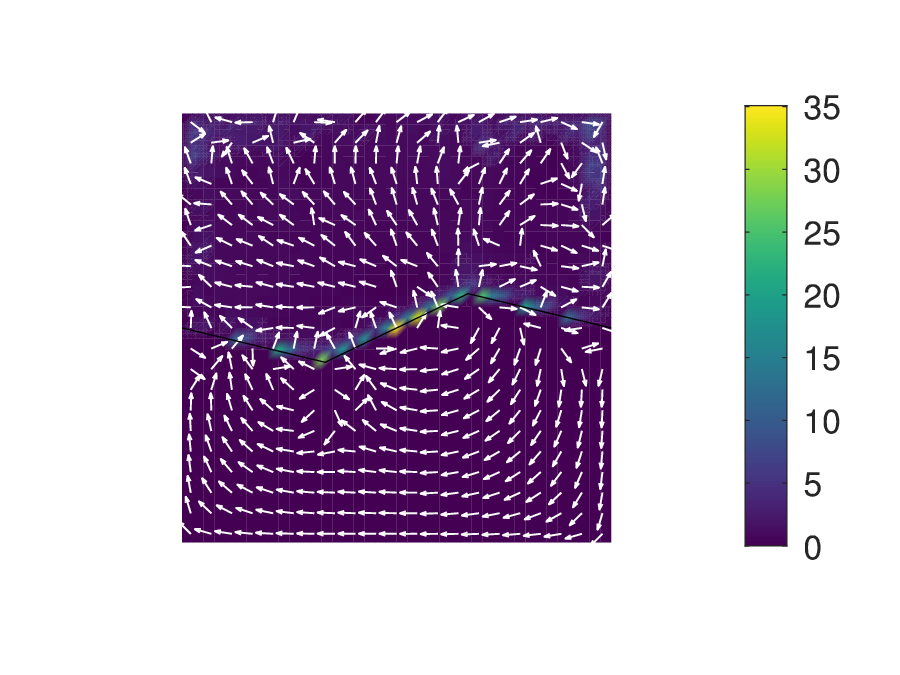}\\
  \includegraphics[width=0.45\textwidth]{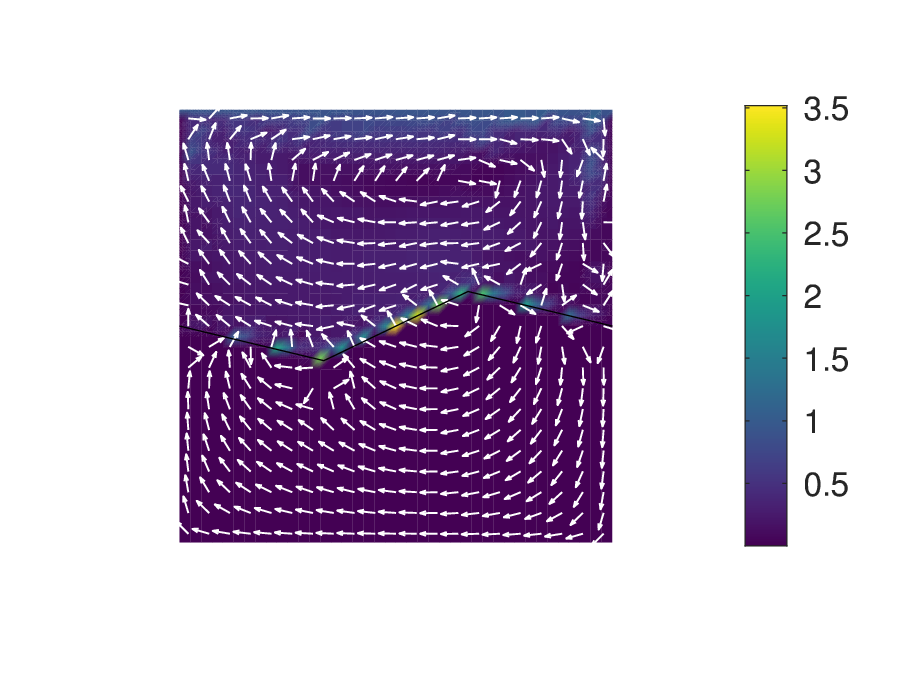}
  \includegraphics[width=0.45\textwidth]{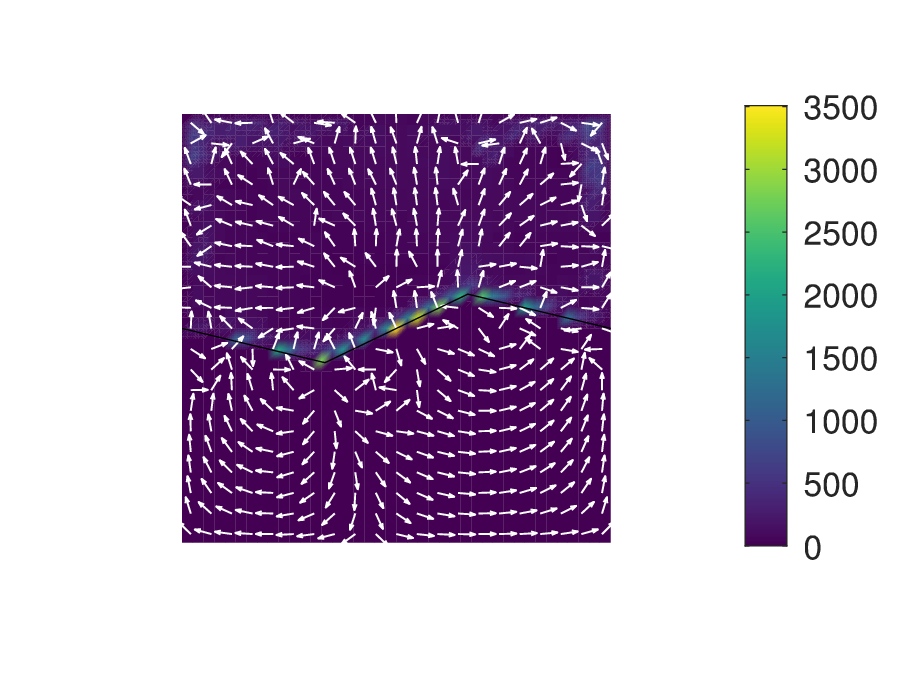}\\
\caption{Quiver plots of the classical method for $\mu=10^{-3}$ (left) and $\mu=10^{-6}$ (right), 
with $\lambda=0$ (top), $10^2$ (middle), and $10^4$ (bottom), in Example~\ref{ex3}.}
\label{fig:classicla low and high pressure ex3}
\end{figure}

\begin{figure}[htbp]
  \centering
  \includegraphics[width=0.45\textwidth]{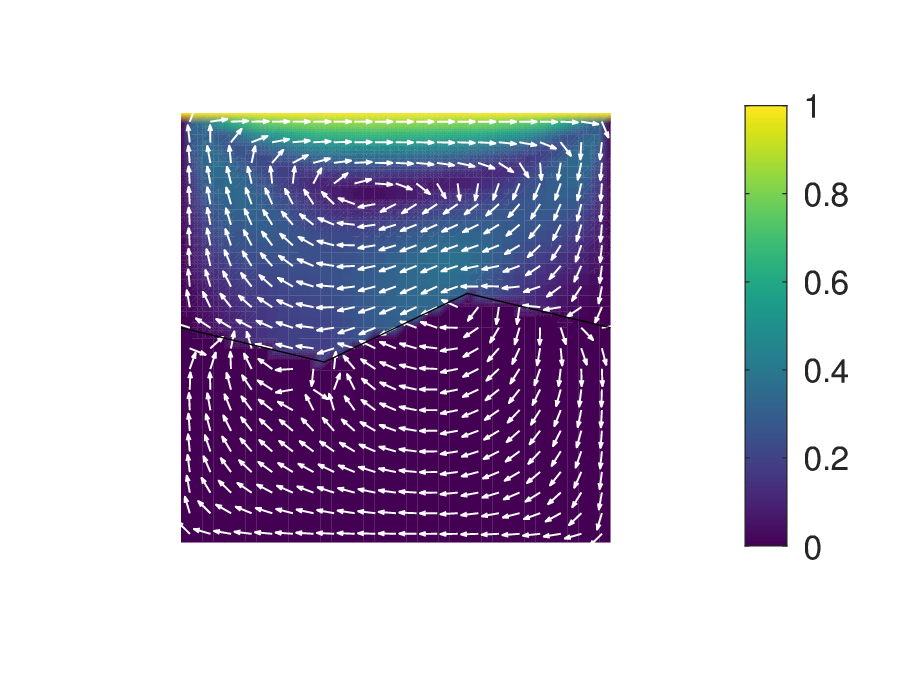}
  \includegraphics[width=0.45\textwidth]{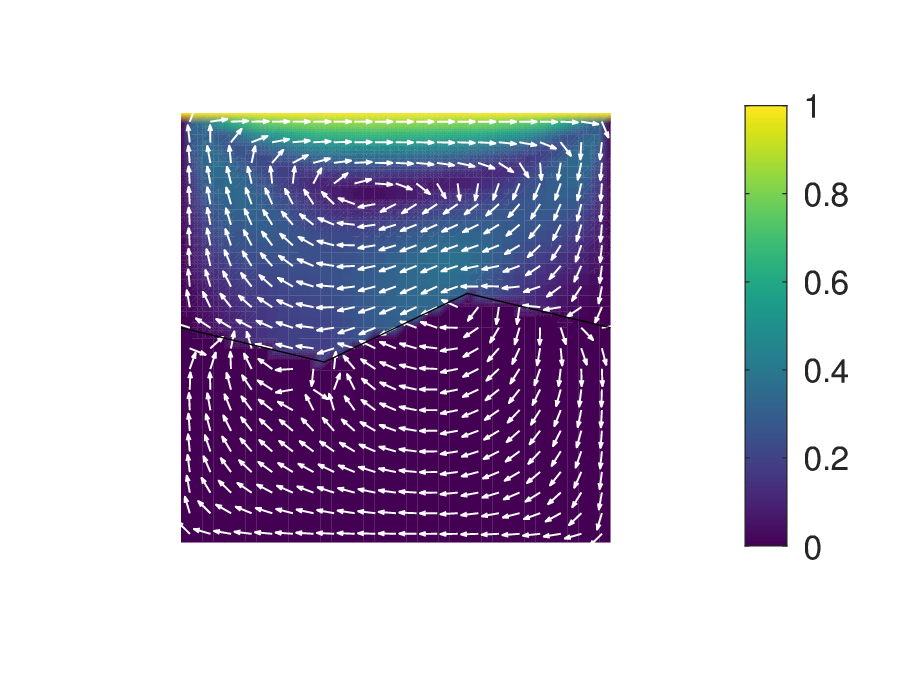}\\
  \includegraphics[width=0.45\textwidth]{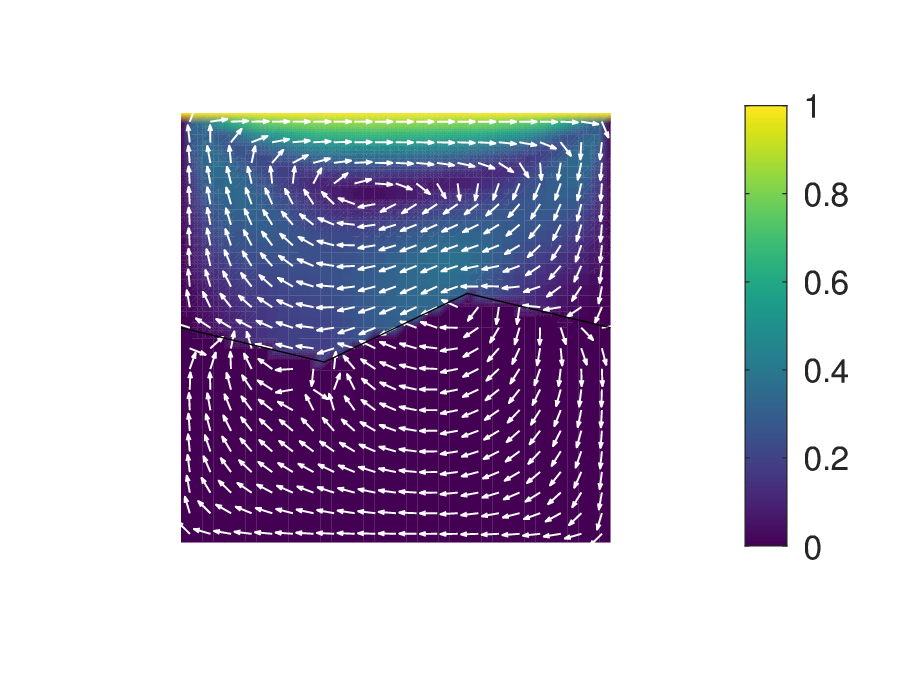}
  \includegraphics[width=0.45\textwidth]{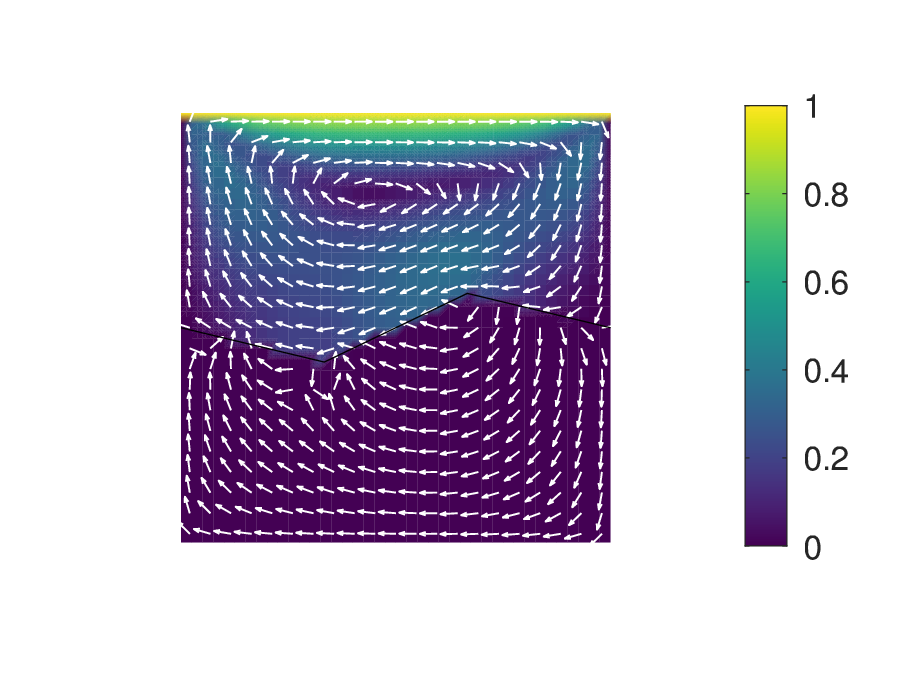}\\
  \includegraphics[width=0.45\textwidth]{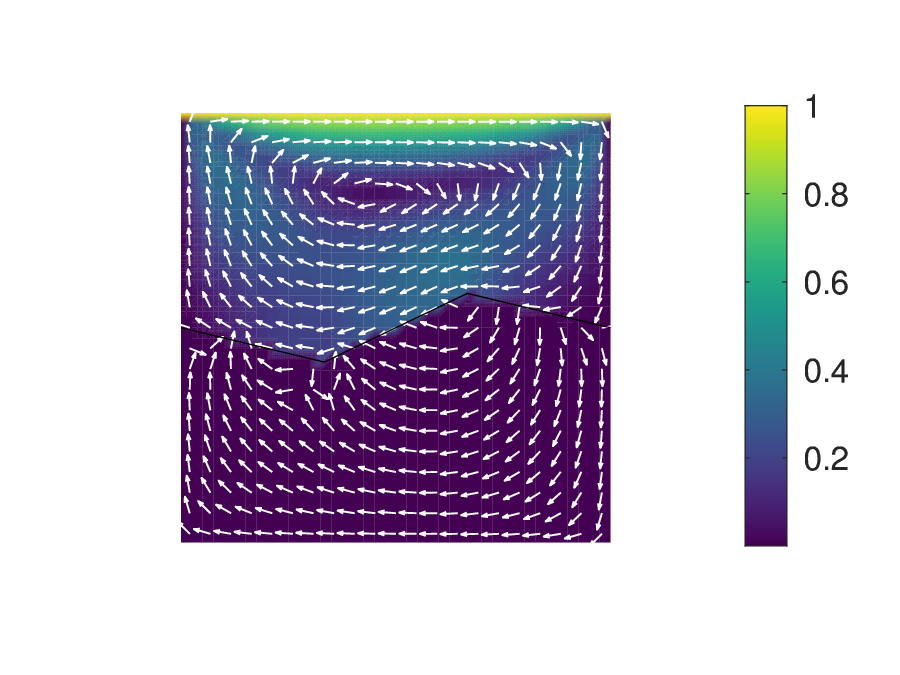}
  \includegraphics[width=0.45\textwidth]{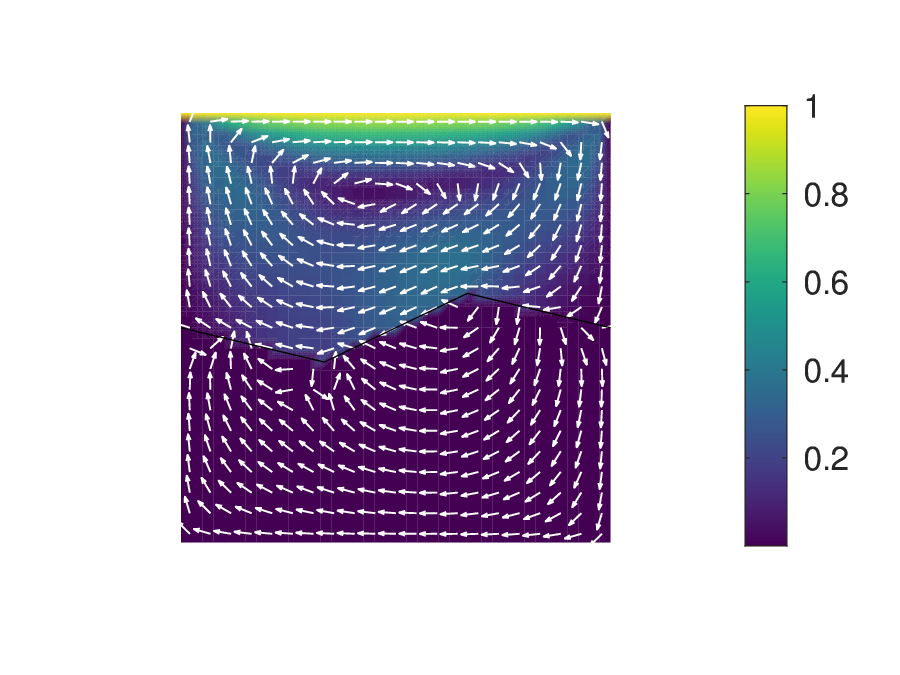}\\
\caption{Quiver plots of the pressure-robust method for $\mu=10^{-3}$ (left) and $\mu=10^{-6}$ (right), 
with $\lambda=0$ (top), $10^2$ (middle), and $10^4$ (bottom), in Example~\ref{ex3}.}
\label{fig:presuure robust low and high pressure ex3}
\end{figure}

\section{Conclusion}\label{sec:conclusion}
This study presents a comprehensive analysis of classical and pressure-robust mixed FEM for the Stokes-Darcy coupled problem, addressing critical gaps in existing methodologies. By decoupling velocity and pressure errors, we reveal how classical method incur pressure-dependent consistency errors, leading to velocity inaccuracies proportional to pressure and inversely proportional to viscosity. The proposed pressure-robust method, enhanced by divergence-free reconstruction operators, effectively eliminates these errors by enforcing exact divergence constraints and interface continuity. Numerical experiments corroborate theoretical findings, demonstrating the superiority of the pressure-robust approach in high-pressure or low-viscosity regimes. Notably, our framework overcomes the limitations of recent studies confined to low-order or two-dimensional cases, demonstrating adaptability to higher-order and three-dimensional settings. Future work could extend this approach to continuous pressure finite element spaces, time-dependent problems, or nonlinear fluid-structure interactions, thereby further enhancing the robustness and efficiency of coupled multi-physics simulations.

\section*{Acknowledgments}
The work of J. Zhang is supported by the National Natural Science Foundation of China (No. 12301469) and the Natural Science Foundation of Jiangsu Province (No. BK20210540).

\section*{Appendix: Proofs of Lemma~\ref{lem:projection operatro Upslion_1} and Lemma~\ref{lem:Pi}}
\setcounter{equation}{0}
\renewcommand\theequation{A.\arabic{equation}}
\renewcommand\thefigure{A.\arabic{figure}}
\setcounter{figure}{0}
\textbf{Lemma~\ref{lem:projection operatro Upslion_1}.}
\textit{There exists an operator $\Upsilon_h^s: V^s\rightarrow V_1^s\subset V_h^s$ satisfying, for any $T\in \mathcal{T}_h(\Omega^s)$ and all $\boldsymbol{v}^s\in V^s$,
\begin{align*}
\int_T\nabla\cdot(\Upsilon_h^s\boldsymbol{v}^s-\boldsymbol{v}^s)=0, \quad \mbox{~and~}\quad 
\|\Upsilon_h^s\boldsymbol{v}^s\|_s\lesssim \|\boldsymbol{v}^s\|_{1,s},
\end{align*}
where $V_1^s=\{\boldsymbol{v}_h^s\in V^s~|~\boldsymbol{v}_{h|T}^s\in \boldsymbol{P}_1^+(T), T\in\mathcal{T}_h(\Omega^s)\}$ with
$\boldsymbol{P}_1^+(T)=[P_1(T)]^N\oplus span\{\boldsymbol{p}_1,\cdots,\boldsymbol{p}_{N+1}\}$. 
}
\begin{proof} 
Let $\Upsilon_1^s$ be the interpolation operator from $V^s$
to
$M_1^s=\{\boldsymbol{v}_h^s\in V^s~|~\boldsymbol{v}_{h|T}^s\in [P_1(T)]^N, T\in\mathcal{T}_h(\Omega^s)\}\subset V_1^s$. For any $\boldsymbol{v}^s\in V^s$, we then have
\begin{align}\label{eqn:interpolation property}
\sum_{T\in\mathcal{T}_h(\Omega^s)}h_T^{2m-2}|\boldsymbol{v}^s-\Upsilon_1^s\boldsymbol{v}^s|_{m,T}^2\lesssim \|\boldsymbol{v}^s\|_{1,s}^2,\quad m=0,1.
\end{align}
Define the operator $\Upsilon_2^s$ from $V^s$ to $V_1^s$ by:
\begin{align}
\label{eqn:definition of Upslion_2}
\left\{
\begin{aligned}
&\Upsilon_2^s\boldsymbol{v}^s(x)=0, \quad \forall \mbox{~node~} x \mbox{~of~}\mathcal{T}_h(\Omega^s),\\
&\int_f(\Upsilon_2^s\boldsymbol{v}^s-\boldsymbol{v}^s)\cdot\boldsymbol{n}=0, \quad \forall \mbox{~side (or face)~} f \mbox{~of~}\mathcal{T}_h(\Omega^s).
\end{aligned}
\right.
\end{align}
For any $T\in\mathcal{T}_h(\Omega^s)$, it is clear
$\Upsilon_2^s\boldsymbol{v}_{|T}^s=\sum_{j=1}^{N+1}\alpha_j\boldsymbol{p}_j$ with $\alpha_j=(\int_{f_j}\boldsymbol{v}^s\cdot\boldsymbol{n}_j)/(\int_{f_j}\prod_{l=1}^N\lambda_{j_l})$,
where $f_j$ is the $j$-th side (or face) of $T$.

Let $\tilde{T}$ be a fixed reference element with $h_{\tilde{T}}=1$. For any $T\in\mathcal{T}_h(\Omega^s)$, there exists an invertible affine mapping $G$ such that $T=G(\tilde{T})$. For each $\psi: T\rightarrow \mathbb{R}$, define $\tilde{\psi}=\psi\circ G$. Analogous definitions are given for the functions defined on the sub-simplices of $T$ and $\tilde{T}$.
From the scale argument, it is clear that 
\begin{align*}
|\boldsymbol{p}_j|_{m,T}=h_T^{N/2-m}|\tilde{\boldsymbol{p}_j}|_{m,\tilde{T}},
\qquad \int_{f_j}\prod_{l=1}^N\lambda_{j_l}=h_T^{N-1}
\int_{\tilde{f}_j}\prod_{l=1}^N\tilde{\lambda}_{j_l}.
\end{align*}
And combining with trace theorem, we have
\begin{align*}
\int_{f_j}\boldsymbol{v}^s\cdot\boldsymbol{n}_j=&h_T^{N-1}
\int_{\tilde{f}_j}\tilde{\boldsymbol{v}}^s\cdot\tilde{\boldsymbol{n}}_j
\lesssim h_T^{N-1}\|\tilde{\boldsymbol{v}}^s\|_{0,\tilde{f}_j}
\lesssim h_T^{N-1}\|\tilde{\boldsymbol{v}}^s\|_{1,\tilde{T}}\\
\lesssim& h_T^{N-1}(h_T^{-N/2}\|\boldsymbol{v}^s\|_{0,T}+h_T^{1-N/2}|\boldsymbol{v}^s|_{1,T})\\
=&h_T^{N/2-1}\|\boldsymbol{v}^s\|_{0,T}+h_T^{N/2}|\boldsymbol{v}^s|_{1,T},
\end{align*}
where $|\tilde{\boldsymbol{p}}_j|_{m,\tilde{T}}$ and  $\int_{\tilde{f}_j}\prod_{l=1}^N\tilde{\lambda}_{j_l}$ are constants independent of $T$. Thus, it can be derived that
\begin{align}
\nonumber
&|\alpha_j|\lesssim  h_T^{-N/2}\|\boldsymbol{v}^s\|_{0,T}+h_T^{1-N/2}|\boldsymbol{v}^s|_{1,T},\\
\label{eqn:estimate L2 norm of Upslion_1}
&|\Upsilon_2^s\boldsymbol{v}^s|_{0,T}=|\sum_{j=1}^{N+1}\alpha_j\boldsymbol{p}_j|_{0,T}\lesssim \|\boldsymbol{v}^s\|_{0,T}+h_T|\boldsymbol{v}^s|_{1,T},\\
\label{eqn:estimate H1 norm of Upslion_1}
&|\Upsilon_2^s\boldsymbol{v}^s|_{1,T}=|\sum_{j=1}^{N+1}\alpha_j\boldsymbol{p}_j|_{1,T}\lesssim h_T^{-1}\|\boldsymbol{v}^s\|_{0,T}+|\boldsymbol{v}^s|_{1,T}.
\end{align}

From integration by parts and the definition of $\Upsilon_2^s$ in (\ref{eqn:definition of Upslion_2}), it is easy to obtain
\begin{align}
\label{eqn:div equality of Upslion_2}
\int_T \nabla\cdot(\Upsilon_2^s\boldsymbol{v}^s-\boldsymbol{v}^s)=\int_{\partial T}(\Upsilon_2^s\boldsymbol{v}^s-\boldsymbol{v}^s)\cdot\boldsymbol{n}=0.
\end{align}
Using the interpolation properties of $\Upsilon_1^s$ in (\ref{eqn:interpolation property}) and the estimates of $\Upsilon_2^s$ in (\ref{eqn:estimate L2 norm of Upslion_1}) and (\ref{eqn:estimate H1 norm of Upslion_1}), we have
\begin{align}
\label{eqn:inequality of Upslion_1}
\|\Upsilon_1^s\boldsymbol{v}^s\|_{1,\Omega^s}\leq\|\Upsilon_1^s\boldsymbol{v}^s-\boldsymbol{v}^s\|_{1,s}+\|\boldsymbol{v}^s\|_{1,s}
\lesssim \|\boldsymbol{v}^s\|_{1,s},
\end{align}
and
\begin{align}
\label{eqn:inequality of Upslion_1 and Upslion_2}
\begin{aligned}
\|\Upsilon_2^s(1-\Upsilon_1^s)\boldsymbol{v}^s\|_{1,s}^2=&\sum_{T\in\mathcal{T}_h(\Omega^s)}
\|\Upsilon_2^s(1-\Upsilon_1^s)\boldsymbol{v}^s\|_{1,T}^2\\
\lesssim &\sum_{T\in\mathcal{T}_h(\Omega^s)}h_T^{-2}\|(1-\Upsilon_1^s)\boldsymbol{v}^s\|_{0,T}^2+
|(1-\Upsilon_1^s)\boldsymbol{v}^s|_{1,T}^2\\
\lesssim & \|\boldsymbol{v}^s\|_{1,s}.
\end{aligned}
\end{align}

Setting $\Upsilon_h^s\boldsymbol{v}^s=\Upsilon_2^s(\boldsymbol{v}^s-\Upsilon_1^s\boldsymbol{v}^s)+\Upsilon_1^s\boldsymbol{v}^s$ and with the help of (\ref{eqn:div equality of Upslion_2}), (\ref{eqn:inequality of Upslion_1}), and (\ref{eqn:inequality of Upslion_1 and Upslion_2}), we can get
\begin{align*}
\int_{\Omega^s}\nabla\cdot(\Upsilon_h^s\boldsymbol{v}^s)=&\int_{\Omega^s}\nabla\cdot(\Upsilon_2^s(\boldsymbol{v}^s-\Upsilon_1^s\boldsymbol{v}^s))+\int_{\Omega^s}\nabla\cdot(\Upsilon_1^s\boldsymbol{v}^s)\\
=&\int_{\Omega^s}\nabla\cdot(\boldsymbol{v}^s-\Upsilon_1^s\boldsymbol{v}^s)+\int_{\Omega^s}\nabla\cdot(\Upsilon_1^s\boldsymbol{v}^s)\\
=&\int_{\Omega^s}\nabla\cdot\boldsymbol{v}^s,
\end{align*}
and
\begin{align*}
\|\Upsilon_h^s\boldsymbol{v}^s\|_{1,s}\leq 
\|\Upsilon_2^s(\boldsymbol{v}^s-\Upsilon_1^s\boldsymbol{v}^s)\|_{1,s}+\|\Upsilon_1^s\boldsymbol{v}^s\|_{1,s}
\lesssim \|\boldsymbol{v}^s\|_{1,s}.
\end{align*}
Thus, we complete the proof.
\end{proof}

\textbf{Lemma~\ref{lem:Pi}.}
\textit{Define $\Pi_h=\Pi_h^s\times \Pi_h^d: V^s\times V^d\rightarrow \Theta_h=\Theta_h^s\times \Theta_h^d$, which
has the following properties
\begin{align*}
&\Pi_h:V_h\rightarrow \Theta_h\cap \Theta_b,\\
&\Pi_h:V_h(0)\rightarrow \Theta_h\cap \Theta_d\cap \Theta_b.
\end{align*}
where
\begin{align*}
\Theta_b&=\{\boldsymbol{\psi}\in H(div,\Omega)~|~  
(\boldsymbol{\psi} _s\cdot\boldsymbol{n}^s)_{|\Gamma^s}=0, ~
(\boldsymbol{\psi} _d\cdot\boldsymbol{n}^d)_{|\Gamma^d}=0,  \mbox{~and~}(\boldsymbol{\psi}^s\cdot\boldsymbol{n}^s
+\boldsymbol{\psi}^d\cdot\boldsymbol{n}^d)_{|\Gamma}=0\},\\
\Theta_d&=\{\boldsymbol{\psi}\in H(div,\Omega)~|~ \nabla\cdot\boldsymbol{\psi}=0\}.
\end{align*}}
\begin{proof}
For any $\boldsymbol{v}_h\in V_h$, we have $\boldsymbol{v}_h^s=0$ on $\Gamma^s$, and $\boldsymbol{v}_h^d\cdot\boldsymbol{n}^d=0$ on $\Gamma^d$, and $\langle\boldsymbol{v}_h^s\cdot\boldsymbol{n}^s
+\boldsymbol{v}_h^d\cdot\boldsymbol{n}^d,q_h\rangle_e=0$ for any $q_h\in P_{k-1}(e), e\subset\Gamma$. From (\ref{eqn:Pi 2}) and noting that $\Pi_h\boldsymbol{v}_h\cdot\boldsymbol{n}\in P_{k-1}(e), e\subset\partial T$ for any $T\in\mathcal{T}_h$, we get $\Pi_h^s\boldsymbol{v}_h^s\cdot\boldsymbol{n}^s=0$ on $\Gamma^s$, $\Pi_h^d\boldsymbol{v}_h^d\cdot\boldsymbol{n}^d=0$ on $\Gamma^d$, and
$\Pi_h^s\boldsymbol{v}_h^s\cdot\boldsymbol{n}^s
+\Pi_h^d\boldsymbol{v}_h^d\cdot\boldsymbol{n}^d=0$ on $\Gamma$. 

Moreover, For any $\boldsymbol{v}_h\in V_h(0)$, from $\nabla\cdot(\Pi_h\boldsymbol{v}_h)_T\in P_{k-1}(T)$ and
$(\nabla\cdot(\Pi_h\boldsymbol{v}_h),1)
=(\Pi_h\boldsymbol{v}_h\cdot\boldsymbol{n},1)_{\partial\Omega}=0$, we have $\nabla\cdot(\Pi_h\boldsymbol{v}_h)\subset Q_h$. And, from the integral by part, (\ref{eqn:Pi 1}), (\ref{eqn:Pi 2}), and $b(\boldsymbol{v}_h,q_h)=0$ for any $q_h\in Q_h$, it holds
\begin{align*}
(\nabla\cdot(\Pi_h\boldsymbol{v}_h),q_h)
=&\sum_{T\in\mathcal{T}_h}(\nabla\cdot(\Pi_h^{i_T}\boldsymbol{v}_h),q_h)_T
=-\sum_{T\in\mathcal{T}_h}(\Pi_h^{i_T}\boldsymbol{v}_h,\nabla q_h)_T+\langle \Pi_h^{i_T}\boldsymbol{v}_h\cdot\boldsymbol{n},q_h\rangle_{\partial T}\\
=&-\sum_{T\in\mathcal{T}_h}(\boldsymbol{v}_h,\nabla q_h)_T+\langle \boldsymbol{v}_h\cdot\boldsymbol{n},q_h\rangle_{\partial T}
=\sum_{T\in\mathcal{T}_h}(\nabla\cdot\boldsymbol{v}_h,q_h)_T=0.
\end{align*}
According to $T\in\Omega^s$ or $\Omega^d$, $i_T$ takes values of $s$ or $d$.The above analysis means $\nabla\cdot(\Pi_h\boldsymbol{v}_h)=0$.
\end{proof}

\bibliographystyle{plain}

\end{document}